\newtheorem{theorem}{Theorem}
\numberwithin{theorem}{section}
\numberwithin{equation}{section}
\numberwithin{table}{section}
\newtheorem{corollary}[theorem]{Corollary}
\newtheorem{lemma}[theorem]{Lemma}
\newtheorem{proposition}[theorem]{Proposition}
\newtheorem{remark}[theorem]{Remark}
\newtheorem{conjecture}[theorem]{Conjecture}
\newcommand{\End}{\operatorname{End}}
\def\l{{\lambda}}
\def\sgn{{\rm sgn}}
\def\C{\mathbb C}
\def\F{\mathbb F}
\def\Z{{\mathbb Z}}
\def\N{{\mathbb N}}
\def\Q{{\mathbb Q}}
\def\HH{{\mathbb H}}
\def\G{\Gamma}
\def\ol{\overline}
\def\SL{{{\operatorname{SL}}}}
\def\Sym{{{\operatorname{Sym}}}}
\def\Frob{{{\operatorname{Frob}}}}
\def\End{{{\operatorname{End}}}}
\def\Span{{{\operatorname{Span}}}}
\def\mat#1#2#3#4{\left(\begin{smallmatrix}#1&#2\\#3&#4\end{smallmatrix}
	\right)}
\newcommand*\HYPERskip{&}
\newcommand*\pFq{
\begingroup
\catcode`\,\active
\def ,{\HYPERskip}%
\doHyper
}
\def\doHyper#1#2#3#4#5{%
\, _{#1}F_{#2}\left[\begin{matrix}#3 \smallskip \\  #4\end{matrix} \; ; \; #5\right]%
\endgroup
}
\title{Supercongruences arising from Ramanujan-Sato Series}
\author{Angelica Babei}
\address{Department of Mathematics, Howard University, Washington, D.C. 20059, USA}
\email{angelica.babei@howard.edu}
\author{Manami Roy}
\address{Department of Mathematics, Lafayette College, Easton, PA 18042, USA}
\email{royma@lafayette.edu}
\author{Holly Swisher}
\address{Department of Mathematics, Oregon State University, Corvallis, OR 97301, USA}
\email{swisherh@oregonstate.edu}
\author{Bella Tobin}
\address{Department of Mathematics, Agnes Scott College, Decatur, GA 30030 USA}
\email{btobin@agnesscott.edu}
\author{Fang-Ting Tu}
\address{Department of Mathematics, Louisiana State University, Baton Rouge, LA 70803, USA}
\email{ftu@lsu.edu}
\subjclass{Primary 11F23, 11A07, 11G05, 11G15, 33C05, 33C20, 11F11, 44A20 }
\keywords{hypergeometric functions, modular forms, elliptic curves, supercongruences, complex multiplication, Galois representations, $L$-values}
\begin{document}

\begin{abstract}

Recently, the authors with Lea Beneish established a recipe for constructing Ramanujan-Sato series for $1/\pi$, and used this to construct 11 explicit examples of Ramanujan-Sato series arising from modular forms for arithmetic triangle groups of non-compact type. Here, we use work of Chisholm, Deines, Long, Nebe and the third author to prove a general $p$-adic supercongruence theorem through an explicit connection to CM hypergeometric elliptic curves that provides $p$-adic analogues of these Ramanujan-Sato series. We further use this theorem to construct explicit examples related to each of our explicit Ramanujan-Sato series examples.
\end{abstract}

\maketitle


\section{Introduction and Statement of Results}\label{sec:intro}

Using infinite series to approximate $\pi$ dates back to Madhava of Sangamagrama circa 1400, and was revolutionized by Ramanujan in 1914 \cite{Ramanujan} when he stated several rapidly converging series formulas for $1/\pi$ of the form
\[
\sum_{k= 0}^{\infty} \frac{(\frac{1}{2})_k (\frac{1}{d})_k (\frac{d-1}{d})_k }{k!^3} (ak+1)(\lambda_d)^k=\frac{\delta}{\pi},
\]
for $d\in \{2,3,4,6\}$, where $\l_d$ are singular values that correspond to elliptic curves with complex multiplication (CM), $a, \delta$ are explicit algebraic numbers, and $(a)_k$ denotes the rising factorial $(a)_k=a(a+1)\cdots(a+k-1)$.\footnote{We note that in fact, a similar identity was given even earlier by Bauer \cite{Bauer}.}  Ramanujan's formulas are all examples of special evaluations of hypergeometric functions. They were proved in 1987 by brothers J. and P. Borwein \cite{BB} as well as D. and G. Chudnovsky \cite{CC}, and both approaches rely on the arithmetic of elliptic integrals of the first and second kind, including the Legendre relation at singular values.  Since the 1980's series of this type have been at the forefront of algorithms to compute decimal approximations of $\pi$.   In particular, D. and G. Chudnovsky \cite{CC} established the record winning series
\begin{equation*}\label{chud}
\frac{426880\sqrt{10005}}{13591409\pi} = \sum\limits_{k=0}^\infty \left(\frac{a}{b}k+1\right) \frac{\left(\frac{1}{2}\right)_k\left(\frac{1}{6}\right)_k\left(\frac{5}{6}\right)_k}{(1)_k^3}\left( \frac{-1}{53360^3}\right)^k \!\! = \!\! \pFq{4}{3}{ \frac12 & \frac16 & \frac56&\frac{a+b}{a} }{& 1 & 1& \frac{b}{a} }{\frac{-1}{53360^3}},
\end{equation*}
where $a=545140134$, $b=13591409$.  To date the current record of computing $10^{14}$ digits of $\pi$ was set by Emma Haruka Iwao on June 8, 2022 \cite{GCloud} using \eqref{chud}.  Generalized series of this form, in which the parameters are related to modular forms, are now called Ramanujan-Sato series after Sato \cite{Sato}, whose formulation in 2002 of a new example of this type renewed interest in the area.

In 1997, Van Hamme \cite{vanHamme} developed $p$-adic analogs of several Ramanujan type series for $1/\pi$.  Analogs of this type are called Ramanujan type supercongruences, and relate truncated sums of hypergeometric series to values of the $p$-adic gamma function.  Ramanujan type supercongruences have been a rich area of study, see for example \cite{Zudilin-supercongruence, Guillera-Zudilin, LongRamakrishna, Swisher, OsburnZudilin, Beukers-supercongruence}.  In 2013, Chisholm, Deines, Long, Nebe, and the third author \cite{WIN2} prove a general $p$-adic analog of Ramanujan type supercongruences modulo $p^2$ for suitable truncated hypergeometric series arising from CM elliptic curves.

Recently, the authors with Lea Beneish \cite{WIN5} established a recipe for constructing Ramanujan-Sato series for $1/\pi$, and used this to construct 11 explicit examples of Ramanujan-Sato series arising from modular forms for arithmetic triangle groups of non-compact type. In this article, we establish supercongruences related to Ramanujan-Sato series given in \cite{WIN5} motivated by the work of Chisholm et al. \cite{WIN2}.

For series of $1/\pi$ that arise via modular forms for a group $\Gamma$ such as in \cite{CCL, ChenGlebovGoenka, WIN5}, there is a connection between the series and known $p$-adic analogs through families of elliptic curves whose monodromy group is $\Gamma$. Inspired by Chisholm et al. \cite{WIN2}, we make this connection explicit through hypergeometric evaluations. In fact, it motivated us to revisit \cite[Theorem 2]{WIN2} using arithmetic properties of elliptic families.

\subsection{Notations for main results}
We use the following notation for the hypergeometric functions appearing in this work.  Given $\alpha \in \C$, $n\in \N$, ${\bf b}= (b_1, \ldots, b_n) \in (\C^\times)^n$, define
\begin{equation}\label{def:F_a}
F_{\alpha,{\bf b}}(z) = \pFq{n+1}{n}{ \frac{1+\alpha}{\alpha} & b_1 & b_2 & \cdots & b_n}{ & \frac{1}{\alpha} & 1 & \cdots & 1}{z} = \sum_{k\geq0} (\alpha k+1) \frac{(b_1)_k\cdots(b_n)_k}{(1)_k^n} z^k,
\end{equation}
where the $_{n+1}F_n$ is taken to be the limit as $\alpha\rightarrow 0$ when $\alpha=0$ (yielding an $_nF_{n-1}$). 
We further write $[F_{\alpha,{\bf b}}]_N(z)$ to denote the truncation of the series \eqref{def:F_a} at $z^N$.

When we evaluate this truncation $[F_{\alpha, \bf{b}}]_N$ for some $\lambda$, i.e., the function $F$ written as a series, truncated at the $z^N$ term, and then evaluated at $\lambda$, we write   
\[ [F_{\alpha, \bf{b}}]_N(\lambda)= \sum\limits_{k= 0}^N(\alpha k+1)\frac{(b_1)_k\cdots (b_n)_k}{(1)_k^n}\lambda^k.\]

For $d\in \{2,3,4,6\}$ let $\widetilde E_d(t)$ denote the following families of elliptic curves\footnote{We note that our definition for $\widetilde{E}_2(t)$ differs by a sign from that in \cite{WIN2}.  This choice makes some of our results cleaner and more consistent with other $d$ values, and does not affect the results in \cite{WIN2}.} parameterized by $t$,
\begin{align*}
\widetilde{E}_2(t): & \;\; y^2=x(1-x)(x-t), \\
\widetilde{E}_3(t):  & \;\;y^2+xy+\frac{t}{27}y=x^3,   \\
\widetilde{E}_4(t): & \;\;y^2=x(x^2+x+\frac{t}{4}), \\
\widetilde{E}_6(t): & \;\;y^2+xy=x^3-\frac{t}{432}.
\end{align*}

We note that with this notation we have that $\pi\cdot F_{0,(\frac{1}{d}, \frac{d-1}{d})}(t)$ is a fundamental period for $\widetilde E_d(t)$ for $d\in \{2,3,4,6\}$.

A curve $E$ over a number field $K$ has CM if its geometric endomorphism ring End$(E(\overline{\Q}))$ is an order of an imaginary quadratic field.  For $t$ such that $\widetilde E_d(t)$ has CM, set 
$\l = -4t(t-1)$ and assume $|\l|<1$ for any embedding.  Then as discussed in Chisholm et al. \cite{WIN2}, there exists unique algebraic numbers $\alpha,\delta$ depending on $d$ and $\l$ such that
\begin{equation}\label{eq:ram}
\sum_{k\geq 0} (\alpha k+1) \frac{(\frac12)_k(\frac1d)_k(\frac{d-1}d)_k}{k!^3}\l^k = \frac{\delta}{\pi}.
\end{equation}
Moreover, specific CM values of $\l$ as well as the corresponding $\alpha,\delta$ can be computed from data of the Borweins \cite{BB}.  Using the notation introduced above, \eqref{eq:ram} can also be rewritten as the following product (see Lemma \ref{lem:4F3prod}),
\[
F_{0,(\frac{1}{d}, \frac{d-1}{d})}(t)\cdot F_{a_2,(\frac{1}{d}, \frac{d-1}{d})}(t) = \frac{\delta}{\pi},
\] 
where $t=\frac{1-\sqrt{1-\l}}2$, $a_2=\frac{2\alpha(1-t)}{1-2t}$, and $\delta$ is a unique algebraic number depending on $t$.

Additionally, the Ramanujan-Sato series constructed in \cite{WIN5} can be written in the form 
\begin{equation}\label{eqn:examples}
\frac{\delta}{\pi} = \sum_{k\geq 0} \left(\alpha k+1\right) A_k \lambda^k = F_{0,{\bf b}}(\lambda) \cdot F_{2\alpha,{\bf b}}(\lambda),
\end{equation}
for parameters $\delta, \alpha, {\bf b}, \lambda$, where $A_k$ is defined by $\sum_{k\geq 0} A_k z^k=F_{0,{\bf b}}(z)^2$ (see \eqref{eq:sqcoeffs}). The 11 explicit examples of Ramanujan-Sato series given in \cite{WIN5}, rewritten from this perspective,
are described by the data in Table~\ref{tab:RSData}, which uses some notation from Corollary~\ref{cor:WIN5} below. 

\begingroup
\renewcommand{\arraystretch}{1.6}
\begin{table}[H]
\label{tab:RSData}

\begin{tabular}{|c|c|c|c|c|c|c|c|}
\hline
Ex. in \cite{WIN5} & $\delta$ & $N$ & $\alpha_N$ & ${\bf b}$ & $\lambda$& $\mu$ & $d$ \\
\hline
4.2 & $2 \sqrt{3}$ & $3$ &$3\sqrt{2} + 4$ & $\frac14, \frac34$ & $\frac{1}{6}(3-2\sqrt{2})$ &$\frac{1}{6}(3-2\sqrt{2})$ &$4$ \\
\hline
4.3 & $\frac{9}{4}\sqrt{2}$ & $5$ & $\frac{1}{4}(20+9\sqrt{5})$ & $\frac14, \frac34$ & $\frac{1}{18}(9-4\sqrt{5})$&$\frac{1}{18}(9-4\sqrt{5})$ &$4$ \\
\hline
4.4 & $\sqrt{3}$ & $3$ & $4$ & $\frac14, \frac34$ & $-\frac13$ & $-\frac13$ & $4$ \\
\hline
4.5 & $\frac83$ & $5$ & $\frac13 (10+4\sqrt{5})$ & $\frac14, \frac34$ & $\frac14 (2-\sqrt{5})$& $\frac14 (2-\sqrt{5})$& $4$ \\
\hline
4.6 & $3\sqrt{3}$ & $2$& $3(\sqrt{2}+1)$ & $\frac13, \frac23$ & $\frac14(2-\sqrt{2})$ & $\frac14(2-\sqrt{2})$ &$3$ \\
\hline
4.7 & $2\sqrt{2}+3$ & $2$ & $4+2\sqrt{2}$  & $\frac12, \frac12$ & $3-2\sqrt{2}$ &  $3-2\sqrt{2}$ &  $2$ \\
\hline
5.2 & $\frac{5^2\Gamma(\frac18)^2\Gamma(\frac38)^2}{2^{9/2}\cdot 3 \pi^3}$ & $2$ & $\frac{14}{3}$ & $\frac12, \frac16, \frac56$ & $\frac{27}{125}$ & $\frac 12 -\frac{7}{5\sqrt{10}}$ & $6$\\
\hline
5.3 & $\frac{5^2\sqrt{3}\Gamma(\frac13)^6}{2^{17/3} \pi^4}$ & $3$ & $\frac{11}{2}$ & $\frac12, \frac16, \frac56$ & $\frac{4}{125}$ & $\frac 12 -\frac{11}{10\sqrt{5}}$& $6$\\
\hline
5.4 & $\frac{\Gamma(\frac{1}{24})\Gamma(\frac{5}{24})\Gamma(\frac{7}{24})\Gamma(\frac{11}{24})}{2^{5/2} \pi^3}$ & $3$ & $4$ & $\frac12, \frac14, \frac34$ & $\frac19$ & $\frac 12 -\frac{\sqrt{2}}{3}$ & $4$ \\
\hline
5.5 & $\frac{3^2 \Gamma(\frac{1}{24})\Gamma(\frac{5}{24})\Gamma(\frac{7}{24})\Gamma(\frac{11}{24})}{2^5 \pi^3}$ & $2$ & $3$ & $\frac12, \frac13, \frac23$ & $\frac12$ &$\frac 12 -\frac1{2\sqrt{2}}$ & $3$ \\
\hline
5.6 & $\frac{3^2\cdot 5 \Gamma(\frac{1}{15})\Gamma(\frac{2}{15})\Gamma(\frac{4}{15})\Gamma(\frac{8}{15})}{2^8 \pi^3}$ & $5$ & $\frac{33}{8}$ & $\frac12, \frac13, \frac23$ & $\frac{4}{125}$ & $\frac 12 -\frac{11}{10\sqrt{5}}$ & $3$\\
\hline
\end{tabular}
\caption{Ramanujan-Sato Example Data}
\end{table}
\endgroup

\begin{corollary}[Corollary of {\cite[Theorem 1.1]{WIN5}}]\label{cor:WIN5} Let $\Gamma$ be a discrete subgroup of $\SL_2(\mathbb{R})$ commensurable with $\SL_2(\Z)$ such that $\left(\begin{smallmatrix}1&1\\0&1\end{smallmatrix}\right) \in \Gamma$, and let $X(\tau)$ be a Hauptmodul of $\Gamma$.  Let $Z(\tau)$ be a weight-$k$ modular form for $\Gamma$ such that $\frac{1}{2\pi i}\frac{dX}{d\tau} = U(\tau)X(\tau)Z(\tau)$ and when $\tau\in D$, a domain of $\mathbb{H}$, $Z(\tau)= F_{0,{\bf b}}(X(\tau))^2$ for ${\bf b}\in \Q^n$.  Further assume there exist $\varepsilon \in \C$ and $\gamma = \left(\begin{smallmatrix}a&b\\c&-a\end{smallmatrix}\right) \in \SL_2(\mathbb{R})$ such that 
\[
\left( Z \vert_k  \gamma \right) (\tau) = \varepsilon Z (\tau).
\]
Set $M_N(\tau):=Z(\tau)/Z(N\tau)$ for $N\in \N$ satisfying $\frac ac(1-N)\in \Z$, and let $\tau_0=\frac{a}{c}+ \frac{i}{c\sqrt N}$. Set $v(\tau_0)$ as the special value $v(\tau_0)=U(\tau_0)X(\tau_0) \left(\frac{dM_N}{dX}\right) \left|_{X=X(\tau_0)} \right.$.  

Then if $\gamma\tau_0 =\frac ac + \frac{i\sqrt N}{c} \in D$, we have\footnote{For the case where $\tau_0 \in D$ a similar result can be stated, but we omit this here.}
\[
\frac{\delta_N}{\pi} = F_{0,{\bf b}}(X(\gamma\tau_0)) \cdot F_{2\alpha_N,{\bf b}}(X(\gamma\tau_0)),
\]
where $\delta_N = \frac{ck\sqrt{N}}{2v(\tau_0) }$, and $\alpha_N = \frac{2NU(\gamma\tau_0)}{v(\tau_0)}$.
\end{corollary}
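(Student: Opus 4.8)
The plan is to unwind the general Ramanujan--Sato recipe of \cite[Theorem 1.1]{WIN5} at the specific point $\gamma\tau_0$ and match the emerging constants against the claimed formulas for $\delta_N$ and $\alpha_N$; the substance is a derivative computation that transfers data at $\tau_0$ to $\gamma\tau_0$ through the involution $\gamma$. First I would linearize the right-hand side. Writing $f=F_{0,{\bf b}}$, the series \eqref{def:F_a} gives $F_{\alpha,{\bf b}}(z)=f(z)+\alpha z f'(z)$, so that $F_{0,{\bf b}}(z)F_{2\alpha,{\bf b}}(z)=f^2+\alpha z (f^2)'$; since $Z=f(X)^2$ on $D$, this is the content of \eqref{eqn:examples} (equivalently Lemma~\ref{lem:4F3prod}) and yields
\[
F_{0,{\bf b}}(X)\cdot F_{2\alpha_N,{\bf b}}(X)=Z+\alpha_N\,X\frac{dZ}{dX}.
\]
Hence it suffices to show $\delta_N/\pi=\bigl(Z+\alpha_N X\,\tfrac{dZ}{dX}\bigr)\big|_{\tau=\gamma\tau_0}$, and the hypothesis $\gamma\tau_0\in D$ is precisely what licenses the identity $Z(\gamma\tau_0)=f(X(\gamma\tau_0))^2$ used in this reduction.

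Next I would pass to the normalized derivative $\theta=\frac{1}{2\pi i}\frac{d}{d\tau}$. The defining relation $\frac{1}{2\pi i}\frac{dX}{d\tau}=U X Z$ converts $X\frac{dZ}{dX}=\frac{\theta Z}{UZ}$ and $\frac{dM_N}{dX}=\frac{\theta M_N}{UXZ}$, the latter giving $v(\tau_0)=\theta M_N(\tau_0)/Z(\tau_0)$. The key arithmetic input is that $\frac{a}{c}(1-N)\in\Z$ together with $\left(\begin{smallmatrix}1&1\\0&1\end{smallmatrix}\right)\in\Gamma$ forces $N\tau_0=\gamma\tau_0+m$ with $m\in\Z$, so that $Z(N\tau_0)=Z(\gamma\tau_0)$ and $(\theta Z)(N\tau_0)=(\theta Z)(\gamma\tau_0)$. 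Differentiating $M_N=Z(\tau)/Z(N\tau)$ at $\tau_0$ and simplifying then gives
\[
v(\tau_0)=\frac{1}{Z(\gamma\tau_0)}\left(\frac{\theta Z(\tau_0)}{Z(\tau_0)}-N\,\frac{(\theta Z)(\gamma\tau_0)}{Z(\gamma\tau_0)}\right).
\]

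The heart of the argument is the involution identity. Differentiating $Z(\gamma\tau)=\varepsilon\,(c\tau-a)^k Z(\tau)$ with respect to $\tau$ (using $\frac{d(\gamma\tau)}{d\tau}=(c\tau-a)^{-2}$) and evaluating at $\tau_0$, where $c\tau_0-a=i/\sqrt N$, produces
\[
\frac{(\theta Z)(\gamma\tau_0)}{Z(\gamma\tau_0)}=\frac{kc}{2\pi\sqrt N}-\frac{1}{N}\,\frac{\theta Z(\tau_0)}{Z(\tau_0)}.
\]
Substituting this into the formula for $v(\tau_0)$ eliminates the data at $\gamma\tau_0$ and gives $v(\tau_0)Z(\gamma\tau_0)=2\,\theta Z(\tau_0)/Z(\tau_0)-kc\sqrt N/(2\pi)$. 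Finally, inserting $\alpha_N=2NU(\gamma\tau_0)/v(\tau_0)$ into $Z+\alpha_N X\frac{dZ}{dX}$ at $\gamma\tau_0$ turns the target into $Z(\gamma\tau_0)+\frac{2N}{v(\tau_0)}\frac{(\theta Z)(\gamma\tau_0)}{Z(\gamma\tau_0)}$; applying the last two displayed relations, the terms in $Z(\gamma\tau_0)$ and in $\theta Z(\tau_0)/Z(\tau_0)$ cancel and leave exactly $\frac{kc\sqrt N}{2\pi v(\tau_0)}=\delta_N/\pi$, as desired.

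The step I expect to be the main obstacle is reconciling the two scalings attached to $\tau_0$: the $N$-dilation built into $M_N$ (hence into $v(\tau_0)$) and the order-two involution $\gamma$ built into the functional equation for $Z$. Making them compatible is exactly what the congruence $\frac{a}{c}(1-N)\in\Z$ provides, and carrying the automorphy factor $(c\tau_0-a)^{-1}=\sqrt N/i$ and the weight-$k$ contribution $kc/(2\pi\sqrt N)$ through with correct signs is the delicate part; once the identity for $(\theta Z)(\gamma\tau_0)/Z(\gamma\tau_0)$ is secured, the remaining simplification is routine.
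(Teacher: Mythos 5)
Your derivation is correct, and I verified the computation line by line: with $\gamma\in\SL_2(\mathbb R)$ and lower-right entry $-a$ one has $a^2+bc=-1$, hence $\frac{d(\gamma\tau)}{d\tau}=(c\tau-a)^{-2}$, $c\tau_0-a=i/\sqrt N$, $\gamma\tau_0=\frac ac+\frac{i\sqrt N}{c}$, and $N\tau_0-\gamma\tau_0=-\frac ac(1-N)\in\Z$, so the periodicity step, the logarithmic-derivative identity $\frac{(\theta Z)(\gamma\tau_0)}{Z(\gamma\tau_0)}=\frac{kc}{2\pi\sqrt N}-\frac1N\frac{(\theta Z)(\tau_0)}{Z(\tau_0)}$, the formula $v(\tau_0)=\theta M_N(\tau_0)/Z(\tau_0)$, and the final cancellation all check out. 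Note, however, that the paper itself gives no proof of this statement: it is quoted as an immediate consequence of \cite[Theorem 1.1]{WIN5}, so there is no internal argument to compare against. What you have written is in effect a self-contained re-derivation of the special case needed, and it is consistent with the machinery the paper does import from \cite{WIN5} — your key involution identity is essentially the relation cited as \cite[(17)]{WIN5} in the proof of Lemma \ref{lem: eigenvector}, and your linearization $F_{0,{\bf b}}\cdot F_{2\alpha_N,{\bf b}}=Z+\alpha_N X\frac{dZ}{dX}$ is \eqref{eq:sqcoeffs}. The only thing I would make explicit is the chain-rule conversion $X\frac{dZ}{dX}=\frac{\theta Z}{UZ}$ being applied at $\gamma\tau_0$ (not $\tau_0$), which requires the defining relation $\theta X=UXZ$ to hold there as well; this is implicit in the hypotheses since that relation is an identity of modular forms on $\Gamma$, but it is worth a sentence.
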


\begin{remark} \label{rmk: forms}
We note that the form given in \cite[Ex. 4.2 - 4.6]{WIN5} is different than \eqref{eqn:examples}, but using Corollary \ref{cor:WIN5} we can rewrite them in the form of \eqref{eqn:examples}.  For instance for \cite[Ex. 4.2]{WIN5} we do this by setting 
$Z(\tau) = F_{0,{\bf b}}(t_2(\tau)/(t_2(\tau)-1))^2$ with ${\bf b} = (1/4,3/4)$ and $X(\tau) = \frac{t_2(\tau)}{t_2(\tau)-1}$, using the notation in \cite{WIN5}, which yields that $U(\tau) = \frac{1}{1-t_2(\tau)} = 1-X(\tau)$ and
\[
\frac{dM_N}{dX} = N\left(\frac{d\left(\frac{dX}{dY}\right)}{dX}\cdot \frac{Y(1-Y)}{X(1-X)} + \frac{(1-2Y)}{X(1-X)} - \frac{dX}{dY}\cdot\frac{Y(1-Y)(1-2X)}{X^2(1-X)^2} \right). 
\]
This works similarly for \cite[Ex.~4.6]{WIN5} with ${\bf b} = (1/3,2/3)$ and $t_3(\tau)$ in the place of $t_2(\tau).$ The form given in \cite[Ex.~4.7]{WIN5} matches \eqref{eqn:examples}, so we apply Corollary \ref{cor:WIN5} with $X(\tau)=t_{\infty}(\tau)$ and $U(\tau)=1-X(\tau)$.
\end{remark}

\subsection{Statement of main results}
We can now state our main theorem.

\begin{theorem} \label{thm:main}
Let $d\in \{2,3,4,6\}$ and set ${\bf b} = (\frac{1}{d}, \frac{d-1}{d})$, or ${\bf b} = (\frac12, \frac{1}{d}, \frac{d-1}{d})$.  Suppose $X(\tau)$ and $Z(\tau)= F_{0,{\bf b}}(X(\tau))^2$ satisfy all of the conditions in Corollary \ref{cor:WIN5} with $\gamma=\left(\begin{smallmatrix}a&b\\c&-a\end{smallmatrix}\right)$ and $\tau_0=\frac{a}{c}+ \frac{i}{c\sqrt N}$, and furthermore if $\l = X(\gamma\tau_0)$, then $F_{0,{\bf b}}(\l) = F_{0,(\frac{1}{d}, \frac{d-1}{d})}(\mu)^r$ for some $r$, where $\mu=\l$ when ${\bf b} = (\frac{1}{d}, \frac{d-1}{d})$, and $\mu=\frac{1 \pm \sqrt{1-\l}}{2}$ when ${\bf b} = (\frac12, \frac{1}{d}, \frac{d-1}{d})$.

Assume that $\Q(\l)$ is a totally real field, $|\l|, |\mu|<1$, and $\widetilde E_d(\mu)$ has CM (equivalently $\tau_0$ is a CM point).  Let $p$  
be a prime such that $X(\tau_0), \lambda$, $\alpha_N$ can be embedded into $\Z_p^\times$, and $p$ is unramified in $\Q(\mu)$.  Suppose that $\widetilde E_d(\mu)$ has good reduction modulo $p$.

Then, it follows that if ${\bf b} = (\frac{1}{d}, \frac{d-1}{d})$, 
\[
[F_{0,(\frac{1}{d}, \frac{d-1}{d})}\cdot F_{2\alpha_N,(\frac{1}{d}, \frac{d-1}{d})}]_{p-1}(X(\gamma\tau_0)) \equiv 
\begin{cases}
p & \text{ if } \left ( \frac{-c^2 N}{p} \right ) =1 \\
-p & \text{ otherwise }
\end{cases}
\pmod{p^2},
\]
if ${\bf b} = (\frac12, \frac{1}{d}, \frac{d-1}{d})$,
\begin{multline*}
[F_{0,(\frac12, \frac{1}{d}, \frac{d-1}{d})}\cdot F_{2\alpha_N,(\frac12, \frac{1}{d}, \frac{d-1}{d})}]_{p-1}(X(\gamma\tau_0)) \equiv \\
\begin{cases}
p u_p^2 & \text{ if } \left ( \frac{-c^2 N}{p} \right ) =1 \text{ and }\left(\frac{1-\l}p \right)=1 \\
-\left( \frac{k_d}{p} \right) p u_p^2 & \text{ if } \left ( \frac{-c^2 N}{p} \right ) =1 \text{ and }\left(\frac{1-\l}p \right)=-1 \\
0 & \text{ otherwise }
\end{cases}
\pmod{p^2},
\end{multline*}
where $u_p$ is a fixed $p$-adic unit root of the geometric Frobenius at $p$ acting on the first cohomology of the elliptic curve $\widetilde E_d(\mu)$, and $k_2=-1$, $k_3=-3$, $k_4=-2$. 
\end{theorem}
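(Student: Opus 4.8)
The plan is to reduce the statement to the hypergeometric-to-Frobenius dictionary of Chisholm et al.\ \cite{WIN2}: a truncated hypergeometric sum evaluated at a CM point is first rewritten as a finite-field (Gaussian) hypergeometric sum, and the latter is identified, through the arithmetic of the family $\widetilde E_d$, with point-counts on the CM elliptic curve $\widetilde E_d(\mu)$ over $\F_p$ and $\F_{p^2}$. The first step is to pin down the CM field. Since $\tau_0=\frac ac+\frac{i}{c\sqrt N}$ satisfies $c^2N\left(\tau_0-\frac ac\right)^2+1=0$, the point $\tau_0$ (hence $\gamma\tau_0$) generates $\Q(\sqrt{-N})$, so $\widetilde E_d(\mu)$ has CM by an order in $\Q(\sqrt{-N})$. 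Under the good-reduction and unramifiedness hypotheses, $\left(\frac{-c^2N}p\right)=\left(\frac{-N}p\right)$ records exactly whether $p$ splits in this field: when it is $+1$ the reduction is ordinary and the geometric Frobenius on $H^1(\widetilde E_d(\mu))$ has a $p$-adic unit eigenvalue $u_p$ with $u_p\overline{u_p}=p$; when it is $-1$ the reduction is supersingular, $a_p=0$, and no unit root exists. I would then use the bridge $F_{0,{\bf b}}(\l)=F_{0,(\frac1d,\frac{d-1}d)}(\mu)^r$ together with Lemma \ref{lem:4F3prod} to rewrite the truncated product purely in terms of $\widetilde E_d(\mu)$, handling the two shapes of ${\bf b}$ separately.

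For ${\bf b}=(\frac1d,\frac{d-1}d)$ one has $\mu=\l$ and $r=1$, so the truncated product is already the Ramanujan-type truncation attached to $\widetilde E_d(\mu)$ itself. Following \cite{WIN2}, I would express $[F_{0,{\bf b}}\cdot F_{2\alpha_N,{\bf b}}]_{p-1}(\l)$ as a Gaussian hypergeometric sum over $\F_p$ and evaluate it modulo $p^2$ as $p$ times a sign supplied by the Gauss sums in that evaluation; this sign is $+1$ in the ordinary (split) case and $-1$ in the supersingular (inert) case, which is precisely $\left(\frac{-c^2N}p\right)$.

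For ${\bf b}=(\frac12,\frac1d,\frac{d-1}d)$ the series is the classical ${}_4F_3$ Ramanujan object, related to $\widetilde E_d(\mu)$ by the quadratic transformation $\l=-4\mu(\mu-1)$, i.e.\ $\mu=\frac{1\pm\sqrt{1-\l}}2$; here $r=2$, realized by Clausen's formula followed by a quadratic transformation of ${}_2F_1$. The new feature is the square root $\sqrt{1-\l}$: its reduction lies in $\F_p$ exactly when $\left(\frac{1-\l}p\right)=1$, in which case $\mu$ is defined over $\F_p$, and the exponent $r=2$ brings the square of the unit root into the Gauss-sum evaluation, yielding $p\,u_p^2$ in the split case. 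When $\left(\frac{1-\l}p\right)=-1$ the parameter $\mu$ lies in $\F_{p^2}\setminus\F_p$, and descending to the $\F_p$-reduction forces a quadratic twist by $1-\l$; I expect this twist to multiply the answer by the value of the CM-discriminant character, producing the factor $-\left(\frac{k_d}p\right)$ with $k_2=-1$, $k_3=-3$, $k_4=-2$. Finally, whenever $p$ is inert in $\Q(\sqrt{-N})$ (the ``otherwise'' case) supersingularity makes the relevant eigenvalue product divisible by $p^2$, so the truncation vanishes modulo $p^2$.

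The main obstacle is obtaining these congruences to the full modulus $p^2$, not merely $p$, and thereby recovering the unit root $u_p$ itself rather than only $a_p\bmod p$. As in \cite{WIN2}, this calls for a Gross--Koblitz/Gauss-sum evaluation of the finite-field hypergeometric sums, together with a careful check that the cross terms of the truncated product of degree exceeding $p-1$ contribute $0$ modulo $p^2$; the $p$-adic hypotheses (that $X(\tau_0),\l,\alpha_N$ embed into $\Zp^\times$, that $p$ be unramified in $\Q(\mu)$, and that $\widetilde E_d(\mu)$ have good reduction) are exactly what guarantee these quantities are well-defined $p$-adic units or uniformizers. A further technical point, special to the three-parameter case, is tracking the quadratic twist through Clausen's formula and the ${}_2F_1$ transformation precisely enough to extract the discriminant character $\left(\frac{k_d}p\right)$ and its sign.
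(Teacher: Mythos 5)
Your outline correctly identifies several structural features of the argument (the CM field $\Q(c\sqrt{-N})$ and Deuring's criterion translating $\left(\frac{-c^2N}{p}\right)$ into ordinary/supersingular reduction, the role of $\left(\frac{1-\l}{p}\right)$ in deciding whether $\mu$ reduces into $\F_p$ or forces a quadratic twist, and the need to control the cross terms of degree exceeding $p-1$ in the truncated product, which is exactly Lemma \ref{lem:errmodp^3}). However, there are two genuine gaps. The first is that you never establish why the constant $2\alpha_N$ appearing in the second factor $F_{2\alpha_N,{\bf b}}$ --- a quantity produced by the modular-form construction in Corollary \ref{cor:WIN5} --- coincides with the CM-theoretic constant $c_\mu$ for which $\nu=\omega+c_\mu\, t\,\partial_t\omega$ is the second eigenvector of the endomorphism action on $H^1_{DR}$. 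You assert that ``the truncated product is already the Ramanujan-type truncation attached to $\widetilde E_d(\mu)$ itself,'' but this is precisely what must be proved: the paper devotes Lemma \ref{lem: eigenvector}, Lemma \ref{lem:deltaquotioent}, and Proposition \ref{lem:main} to showing $2\alpha_N=1/B(\gamma\tau_0)$ via the uniqueness of the algebraic number in \eqref{eq:givesB} together with the algebraicity of $\delta_N/F_{0,{\bf b}}(\l)$ (a CM-value argument with $\Omega_K$). Without this identification none of the machinery of \cite{WIN2} applies to the series at hand.

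The second gap is your proposed route through finite-field (Gaussian) hypergeometric sums and a Gross--Koblitz/Gauss-sum evaluation. The congruence between a truncated hypergeometric sum and the corresponding character sum $H_p$ is in general only known modulo $p$ (see Remark \ref{rmk: Hp and F}); the modulo $p^2$ refinement is exactly the supercongruence content one is trying to prove, so deducing it from the character-sum side is circular as stated. The paper instead works entirely on the de Rham/crystalline side: it expands the eigen-differentials $\omega,\nu$ in the formal group parameter $\xi=-x/y$, applies Katz's theorem on the Frobenius lifting commuting with the CM action to get $a(p-1)b(p-1)\equiv\pm p\pmod{p^2}$ (Lemma \ref{lem:WIN2Prop16}), and for the three-parameter case splits the product modulo $p^3$ and invokes Theorems \ref{thm:WIN2a} and \ref{thm:WIN2b} separately; the character-sum statements appear only in the appendix as corollaries of, not inputs to, these results. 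To repair your argument you would either need to adopt the formal-group approach or supply an independent $p$-adic Gamma-function proof of the mod $p^2$ link between the truncation and $H_p$, which is a substantial undertaking in its own right.
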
 

\begin{remark}
\label{rmk:conj}
We note that work of Beukers (see Theorem \ref{thm:Beukers}) as well as computational observations indicate that the series obtained from Theorem \ref{thm:main} holds modulo $p^3$ in some cases but not for others. See Section \ref{sec: modp3} 
for further discussion. 
\end{remark}

In addition to Theorem \ref{thm:main}, which is a general statement, in Section \ref{sec:examples} we provide we provide specific examples using the Ramanujan-Sato series given in \cite{WIN5} (as in Table~\ref{tab:RSData}).

Hypergeometric functions, such as those appearing in Ramanujan-Sato series as well as their $p$-adic truncations and also those over finite fields, encode abundant information on the arithmetic invariants of hypergeometric motives such as their periods, Galois representations, and the $p$-adic unit roots of the local $L$-functions due to Dwork's unit roots theory. They are also closely connected to modular forms through the modularity theorem and the differential equations satisfied by certain modular forms (see \cite{ChenGlebov, WIN3+, RRV, Yang-Schwarzian, ChenGlebovGoenka}, for example).

Using Theorem \ref{thm:main}, we prove a few results in Section \ref{sec: modular} connecting the hypergeometric functions appearing in the series in \cite[Ex. 5.2 - 5.6]{WIN5} with Galois representations and special $L$-values of modular forms of weight $3$. For instance, specific examples of Corollary~\ref{cor: modularity} give us the following result.

\begin{corollary}
\label{cor:ap-lmfdb-examples}
For primes $p\geq7$, 
    \begin{align*}
       \left[ F_{0,(\frac12, \frac{1}{6}, \frac{5}{6})}\right]_{p-1}\left(\frac{27}{125}\right)& \equiv a_p( f_{\href{https://www.lmfdb.org/ModularForm/GL2/Q/holomorphic/800/3/g/a/}{800.3.g.a}})  \pmod{p^2} \\
         \left[ F_{0,(\frac12, \frac{1}{6}, \frac{5}{6})}\right]_{p-1}\left(\frac{4}{125}\right)&\equiv a_p( f_{\href{https://www.lmfdb.org/ModularForm/GL2/Q/holomorphic/300/3/g/b/}{300.3.g.b}})  \pmod{p^2}\\
        \left[ F_{0,(\frac12, \frac{1}{4}, \frac{3}{4})}\right]_{p-1}\left(\frac{1}{9}\right)&\equiv a_p( f_{\href{https://www.lmfdb.org/ModularForm/GL2/Q/holomorphic/24/3/h/a/}{24.3.h.a}})  \pmod{p^2}\\
         \left[F_{0,(\frac12, \frac{1}{3}, \frac{2}{3})}\right]_{p-1}\left(\frac{1}{2}\right) &\equiv a_p( f_{\href{https://www.lmfdb.org/ModularForm/GL2/Q/holomorphic/24/3/h/b/}{24.3.h.b}})  \pmod{p^2}\\
         \left[F_{0,(\frac12, \frac{1}{3}, \frac{2}{3})}\right]_{p-1}\left(\frac{4}{125}\right)&\equiv a_p( f_{\href{https://www.lmfdb.org/ModularForm/GL2/Q/holomorphic/15/3/d/b/}{15.3.d.b}})  \pmod{p^2},
    \end{align*}
    where the subscripts on the modular forms $f$ above are their LMFDB labels \cite{LMFDB} and $a_p(f)$ is the $p$th Fourier coefficient of $f$. 
\end{corollary}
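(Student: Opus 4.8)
The plan is to obtain each of the five congruences as a specialization of the general modularity result, Corollary~\ref{cor: modularity}, reducing the problem to one uniform step together with a case-by-case identification of the relevant newform. A point worth isolating at the outset is that, unlike in Theorem~\ref{thm:main}, these congruences evaluate the three-parameter function $F_{0,(\frac12,\frac1d,\frac{d-1}d)}$ at the rational value $\l$ recorded in the $\lambda$-column of Table~\ref{tab:RSData} (namely $\frac{27}{125},\frac{4}{125},\frac19,\frac12,\frac{4}{125}$), while the elliptic curve governing the arithmetic is $\widetilde E_d(\mu)$ at the associated $\mu=\frac{1-\sqrt{1-\l}}2$, with $d$ read off from the same table.

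For the uniform step I would use the relation $F_{0,(\frac12,\frac1d,\frac{d-1}d)}(\l)=F_{0,(\frac1d,\frac{d-1}d)}(\mu)^2$ recorded in Theorem~\ref{thm:main}, a Clausen-type quadratic identity valid with $r=2$ because $\l=4\mu(1-\mu)$. Since $\l$ is rational, this ${}_3F_2$ is a period of a weight-$2$ hypergeometric motive $M(\l)$ defined over $\Q$, and the squaring exhibits $M(\l)$ as a piece of the symmetric square of $H^1(\widetilde E_d(\mu))$, descended from the real quadratic field $\Q(\mu)$ to $\Q$. The CM hypothesis then forces $M(\l)$ to have complex multiplication by the imaginary quadratic field $K$ of $\widetilde E_d(\mu)$, so its Galois representation decomposes over $G_{\Q}$ into a Dirichlet character together with a two-dimensional summand cut out by the induced Hecke character $\psi^2$, which is precisely the representation of a weight-$3$ CM newform $f$. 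Running this decomposition through the unit-root comparison underlying Theorem~\ref{thm:main} then gives
\[
[F_{0,(\frac12,\frac1d,\frac{d-1}d)}]_{p-1}(\l)\equiv a_p(f)\pmod{p^2},
\]
where $a_p(f)\equiv u_p^2\pmod{p^2}$ at primes split in $K$ and $a_p(f)=0$ at inert primes, $u_p$ denoting the $p$-adic unit root of Frobenius on $H^1(\widetilde E_d(\mu))$.

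It then remains to match $f$ with the labels in \cite{LMFDB}. For each row I would determine the imaginary quadratic CM field $K$ of $\widetilde E_d(\mu)$ and compute the conductor of $\psi^2$ from the bad primes of $\widetilde E_d(\mu)$ together with the ramification of $K/\Q$, which fixes the level and nebentypus of $f$; I would then confirm the claimed label by comparing its first several Fourier coefficients $a_p(f)$ with the residues $[F_{0,(\frac12,\frac1d,\frac{d-1}d)}]_{p-1}(\l)\bmod p^2$ computed directly from the truncated series for small primes $p\ge 7$. The bound $p\ge 7$ is exactly what discards the anomalous primes $2,3,5$, which divide both the levels $800,300,24,15$ and the denominators $125,9,2$ of the evaluation points.

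The main obstacle I anticipate is this final identification: the conductor of $\psi^2$, and hence the exact level and character of $f$, is dictated by the local behavior of $\widetilde E_d(\mu)$ at the bad and ramified primes rather than by a single closed formula, and one must further check that the candidate form is the unique newform in \cite{LMFDB} carrying the computed Hecke data. By contrast, the uniform step is essentially formal once the symmetric-square/CM decomposition is in hand; the only genuinely delicate point there is the promotion of the expected mod-$p$ congruence to the stated mod-$p^2$ supercongruence, which is exactly the content supplied by the unit-root argument behind Theorem~\ref{thm:main}.
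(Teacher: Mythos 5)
Your proposal matches the paper's own derivation: the paper likewise obtains these five congruences as specializations of Corollary~\ref{cor: modularity}, whose proof proceeds exactly as you outline --- the Clausen-type identity ties $F_{0,(\frac12,\frac1d,\frac{d-1}d)}(\l)$ to the CM curves $\widetilde E_d\left(\frac{1\pm\sqrt{1-\l}}{2}\right)$, modularity of those curves over $\Q(\sqrt{1-\l})$ produces a weight-$3$ CM newform from the product of Grossencharacters twisted by $\left(\frac{k_d}{\cdot}\right)$ (which is your $\psi^2$ up to the quadratic-twist relation between $E_+$ and $E_-$), and Theorem~\ref{thm:WIN2b} supplies the mod $p^2$ unit-root congruence. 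The identification of the specific LMFDB labels is, as you anticipate, a conductor/coefficient-matching computation that the paper does not spell out beyond the sample relation $f_{300.3.g.b}=\left(\frac 5\cdot\right)\otimes\eta(2\tau)^3\eta(6\tau)^3$.
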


Corollary~\ref{cor: modularity}, which is proved in Section \ref{sec: modular}, uses modularity of CM elliptic curves and $L$-functions to give values of truncated hypergeometric functions in terms of Fourier coefficients of modular forms. We also obtain in \S \ref{subseq: specialvals} values of the corresponding untruncated hypergeometric functions both in terms of periods $\Omega_K$, and conjecturally special $L$-values of these same modular forms.

\subsection{Outline of the paper}
The rest of this paper is organized as follows.  In Section \ref{sec:ECMC}, we describe the connection between the hypergeometric evaluations for $1/\pi$ given by Beneish and the authors in \cite{WIN5} and their $p$-adic counterparts provided in Theorem \ref{thm:main} through the study of hypergeometric elliptic curves and modular forms associated to certain triangle groups. We also provide several lemmas needed for the proof of Theorem \ref{thm:main}.  In Section \ref{sec:proof}, we prove Theorem \ref{thm:main}.  In Section \ref{sec:examples}, we give 11 examples of Theorem \ref{thm:main}, which stem from applications of \cite[Thm. 2]{WIN5}, as well as 4 different examples of supercongruences obtained by directly applying Theorem \ref{thm:WIN2a} to the series in \cite[Ex. 4.2 - 4.5]{WIN5}. In Section \ref{sec: modular} we describe connections between hypergeometric functions of the form $F_{0,(\frac12, \frac{1}{d}, \frac{d-1}{d})}$ and weight $3$ modular forms, modular Galois representations, and special $L$-values of weight $3$ modular forms. We further discuss modulo $p^3$ supercongruence conjectures and results in Section \ref{sec: modp3}.  In the Section \ref{sec:appendix} Appendix, we provide an explanation of Theorem \ref{thm:WIN2b} through the arithmetic properties of hypergeometric elliptic families, and further record some parallel results in terms of finite hypergeometric functions introduced by Beukers, Cohen, and Mellit \cite{BCM}, which allow us to see the supercongrunce in Theorem \ref{thm:WIN2b} from a character sum perspective.

\subsection{Acknowledgements}

This material is based upon work supported by the National Science Foundation grant DMS-1928930 while the authors were in residence at the Simons Laufer Mathematical Sciences Research Institute in Berkeley, California, during the summer of 2023. The third author is further supported by NSF grant DMS-2101906, the fifth author by NSF grant DMS-230253, and the fourth author by an AMS-Simons travel grant.  The fifth author was hosted at Oregon State University during the fall of 2023 while working on this project. The authors would also like to thank Michael Allen, Frits Beukers, and Ling Long for their helpful discussions.  The authors would like to thank the referee for valuable suggestions and comments.

\section{Hypergeometric Elliptic Curves and Triangle Modular Curves} 
\label{sec:ECMC}

The  connection between the hypergeometric evaluations for $1/\pi$ in \cite{WIN5} and their $p$-adic version in this paper arises from the moduli interpretation of such hypergeometric elliptic curves.
 
In particular, a group $\G \in \{\Gamma(2), \Gamma_0(3), \Gamma_0(2)\}$ is a monodromy group for a family of elliptic curves as given in Table \ref{fams}, where  $t$ is the Hauptmodul of $\G$ taking values at $0$, $1$ at two of the cusps and having a pole at the elliptic point of the other cusp, and $\widetilde{E}_d(t)$ is a family of  elliptic curves over points in $X_\G$ for $t \ne 0, 1, \infty$. In Table \ref{fams} we also provide the element $W$ lying in the normalizer of the group $\G$ that switches the elements $t$ and $1-t$. 
\begingroup
\renewcommand{\arraystretch}{1.75}
\begin{table}[h!]

\caption{Families of hypergeometric elliptic curves }
\label{fams}
\begin{tabular}{|c|c|c|c|c|c|}\hline
${\widetilde{E}}_d(t)$& ${\bf b}$&  $\G$& $W$\\
\hline
 $\widetilde{E}_2(t):  y^2=x(1-x)(x-t)$ & $(\frac12,\frac12)$ &  $\G(2)$ & $\mat 0{-1}10$\\ \hline
   $ \widetilde{E}_3(t): y^2+xy+\frac{t}{27}y=x^3$ & $(\frac13,\frac23)$ &   $\G_0(3)$  & $\frac1{\sqrt 3}\mat 0{-1}30$ \\ \hline
 $\widetilde{E}_4(t): y^2=x(x^2+x+\frac t4)$ & $(\frac14,\frac34 )$  & $\G_0(2)$&  $\frac1{\sqrt 2}\mat 0{-1}20$\\ \hline
   $ \widetilde{E}_6(t): y^2+xy=x^3-\frac{t}{432}$& $(\frac16,\frac56 )$ & -- &  --\\ \hline 
\end{tabular}
\end{table}
\endgroup
For the $d=6$ case, the function $t$ 
is related to the modular group $\mbox{PSL}_2(\Z)$ by $4t(1-t)=1728/j$, 
where $j$ is the usual $j$-invariant, the Hauptmodul on  $\mbox{PSL}_2(\Z)$ with $j(i)=1728$, $j(\zeta_3)=0$, and $j(i\infty)=\infty$.

\subsection{Hypergeometric CM Elliptic Curves}\label{CMcurves} 

The main result of Chisholm, et al. in \cite{WIN2} is as follows. 

\begin{theorem}[Chisholm, et al. {\cite[Theorem 1]{WIN2}}] \label{thm:WIN2a}
For $d\in\{2,3,4,6\}$, let $\l\in \overline \Q$ such that  $\Q(\l)$ is totally real,
the elliptic curve  $\widetilde{E}_d(\frac{1\pm\sqrt{1-\lambda}}{2})$ has CM, and $|\l|<1$ for an embedding of $\lambda$ to $\C$.  For each prime $p$ that is unramified  in $\Q(\sqrt{1-\l)}$ and coprime to the discriminant of $\widetilde{E}_d(\frac{1-\sqrt{1-\lambda}}{2})$ such that 
$\alpha$, $\l$ give a series of the form \eqref{eq:ram} and can be embedded in $\Z_p^\times$ 
(and we fix such embeddings), then
\[
[F_{\alpha,\left(\frac{1}{2},\frac{1}{d},\frac{d-1}{d}\right)}]_{p-1}(\lambda)\equiv  {\sgn} \cdot \left( \frac{1-\l}p \right) \cdot p \pmod {p^2},
\]
where $\left(\frac{1-\l}p \right)$ is interpreted as a Legendre symbol over $\Z_p$, which is possible since $\l$ can be embedded in $\Z_p$, and $\sgn=\pm 1$, equaling $1$ (or $-1$) if and only if  $\widetilde{E}_d(\frac{1\pm\sqrt{1-\lambda}}{2})$ is ordinary (or supersingular) at $p$.
\end{theorem}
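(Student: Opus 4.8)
The plan is to transport the truncated ${}_4F_3$ congruence into the arithmetic of the CM elliptic curve $\widetilde{E}_d(t)$ with $t=\frac{1-\sqrt{1-\lambda}}{2}$, so that $\lambda=4t(1-t)$, using that $\pi\,F_{0,(\frac1d,\frac{d-1}{d})}(t)$ is a fundamental period of this curve. The first step is to factor the summand. By Clausen's identity the underlying ${}_3F_2$ is a perfect square, ${}_3F_2(\tfrac12,\tfrac1d,\tfrac{d-1}{d};1,1;\lambda)=g(\lambda)^2$ with $g(\lambda)={}_2F_1(\tfrac{1}{2d},\tfrac{d-1}{2d};1;\lambda)$, and the linear factor $(\alpha k+1)$ turns the full series into a genuine product $F_{\alpha,(\frac12,\frac1d,\frac{d-1}{d})}(\lambda)=g(\lambda)\cdot\big(g(\lambda)+2\alpha\lambda\, g'(\lambda)\big)$. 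Via the quadratic transformation $\lambda=4t(1-t)$ this is precisely the decomposition $F_{0,(\frac1d,\frac{d-1}{d})}(t)\cdot F_{a_2,(\frac1d,\frac{d-1}{d})}(t)$ of Lemma~\ref{lem:4F3prod}, where $g(\lambda)=F_{0,(\frac1d,\frac{d-1}{d})}(t)$ is the fundamental period of $\widetilde{E}_d(t)$ and the second factor is its companion (second-kind) solution. Classically the evaluation $F_0\cdot F_{a_2}=\delta/\pi$ at a CM point is the Legendre period relation for $\widetilde{E}_d(t)$; the target congruence is its $p$-adic shadow, with $\delta/\pi$ replaced by $\sgn\cdot\left(\frac{1-\lambda}{p}\right)\cdot p$.

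Second, I would identify each truncated factor $p$-adically through Dwork's unit-root theory. For the period, $[g]_{p-1}(\lambda)$ is congruent modulo $p$ to the $p$-adic unit root $u_p$ of the geometric Frobenius on $H^1$ of $\widetilde{E}_d(t)$ in the ordinary case, and to $0$ in the supersingular case; these follow from the standard congruences between the truncated period and the coefficients of the formal-group logarithm (Hasse invariant). The companion truncation encodes the complementary period, and the Weil relation $\det(\Frob\mid H^1)=p$, i.e.\ $u_p\bar u_p=p$, should force the product of the two truncations to be $\equiv p$ up to a sign and a $p$-adic unit modulo $p^2$. This is the $p$-adic counterpart of the Legendre relation and is what produces the factor $p$.

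Third, I would feed in the CM structure to make the eigenvalues and signs explicit. By Deuring's theorem, $\widetilde{E}_d(t)$ is ordinary at $p$ exactly when $p$ splits in the CM field and supersingular when $p$ is inert, which is the dichotomy recorded by $\sgn=\pm1$. The Legendre factor $\left(\frac{1-\lambda}{p}\right)$ arises from the two admissible base points: $\widetilde{E}_d\!\left(\frac{1-\sqrt{1-\lambda}}{2}\right)$ and $\widetilde{E}_d\!\left(\frac{1+\sqrt{1-\lambda}}{2}\right)$ are quadratic twists by $1-\lambda$, so interchanging the sign of $\sqrt{1-\lambda}$ multiplies the relevant Frobenius eigenvalue by $\left(\frac{1-\lambda}{p}\right)$; tracking this twist through the product recovers the stated factor. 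Combining the three steps yields the congruence modulo $p^2$.

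The main obstacle is the sharp passage from modulo $p$ to modulo $p^2$. The factorization of $F_\alpha$ does not truncate term-by-term, so one needs a \emph{truncated} Clausen/Legendre relation: the cross terms of degree exceeding $p-1$ must be shown to vanish modulo $p^2$, which relies on the rising factorials $(\tfrac{1}{2d})_k,(\tfrac{d-1}{2d})_k$ supplying a factor of $p$ once $k$ passes the relevant threshold, together with second-order Dwork congruences pinning the companion contribution to $p/u_p$ rather than merely to its residue. The supersingular case is the most delicate, since there $u_p$ is no longer a unit, $[g]_{p-1}\equiv0\pmod p$, and the naive product argument degenerates; here one must argue directly with the supersingular Frobenius eigenvalues (whose product is still $p$) and the precise $p$-adic valuations of the two truncated factors, which is what produces the sign $\sgn=-1$.
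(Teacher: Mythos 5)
Your overall strategy is the one the paper (following Chisholm et al.\ \cite{WIN2}, whose proof it imports and whose machinery it lays out in \S\ref{sec:ECMC} and the Appendix) actually uses: factor the $(\alpha k+1)$-weighted series as a product of a period and a quasi-period of the CM curve $\widetilde{E}_d(t)$ via Clausen and the quadratic substitution $\lambda=4t(1-t)$ (this is Lemma~\ref{lem:4F3prod}), identify the truncations of the two factors with the coefficients $a(p-1)$, $b(p-1)$ of the eigen-differentials $\omega,\nu$, and invoke the $p$-adic Legendre relation $a(p-1)b(p-1)\equiv \pm p \pmod{p^2}$ coming from a Frobenius lift commuting with the CM action (Lemma~\ref{lem:WIN2Prop16}, Proposition~\ref{lem:main}), with Deuring's theorem converting ordinary versus supersingular into the splitting condition that defines $\sgn$. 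You also correctly flag the two real technical burdens, namely the truncated Clausen/Legendre identities (handled by Lemma~\ref{lem:errmodp^3} and Lemma~\ref{lemma: p-Clausen}) and the degeneration of the unit-root argument at supersingular primes.

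The genuine gap is your mechanism for the factor $\left(\frac{1-\l}{p}\right)$. The curves $\widetilde{E}_d\!\left(\frac{1-\sqrt{1-\lambda}}{2}\right)$ and $\widetilde{E}_d\!\left(\frac{1+\sqrt{1-\lambda}}{2}\right)$ are \emph{not} quadratic twists of one another by $1-\l$: since $1-\l=(1-2t)^2$ is a square in $\Q(t)$, such a twist is trivial over the field where these curves are defined. By Proposition~\ref{prop: traces} they are related by the twist by $k_d$ (with $k_2=k_6=-1$, $k_3=-3$, $k_4=-2$), and that twist accounts for the $\left(\frac{k_d}{p}\right)$ appearing in Theorem~\ref{thm:WIN2b}, not for the $\left(\frac{1-\l}{p}\right)$ here. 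The symbol $\left(\frac{1-\l}{p}\right)$ instead records whether $p$ splits or is inert in $\Q(\sqrt{1-\l})$, equivalently whether $t$ embeds in $\Z_p$ or only in its unramified quadratic extension $A$. In the inert case the degree-$p$ Frobenius lift is not an endomorphism of $\widetilde{E}_d(t)$ but an isogeny interchanging the conjugate pair $E_-$, $E_+$, and tracking this through the Legendre relation flips the sign: one gets $a(p-1)b(p-1)\equiv \sgn\cdot\varepsilon_\mu\, p \pmod{p^2A}$ with $\varepsilon_\mu=-1$ exactly when $\left(\frac{1-\l}{p}\right)=-1$ (see the end of the proof of Proposition~\ref{lem:main} and Case~2 of the Appendix proof of Theorem~\ref{thm:WIN2b}). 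Your twist heuristic would instead multiply the eigenvalue by $\left(\frac{k_d}{p}\right)$ and would not produce the stated sign, so as written the third step does not close; it needs to be replaced by the split/inert analysis of the Frobenius lift over $A$.
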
 

As discussed in \cite[Remarks 1 and 2]{WIN2}, this congruence also holds when $|\l|\geq 1$ and has been conjectured by Zudilin to hold modulo $p^3$ \cite{ Guillera-Zudilin, GuoZudilin, Zudilin-supercongruence}. See, for example, \cite{Beukers-supercongruence, Mortenson-padic,  Zudilin-supercongruence} for some confirmed cases.  The following is a companion to Theorem \ref{thm:WIN2a}.

\begin{theorem}(Revision of \cite[Theorem 2]{WIN2})\label{thm:WIN2b}
Assume the notation and assumptions in Theorem \ref{thm:WIN2a}.  Then,
\[
[F_{0,(\frac12, \frac{1}{d}, \frac{d-1}{d})}]_{p-1}(\l) \equiv L \pmod {p^2},
\]

where 
\[
L= \begin{cases}
0 & \text{if } \widetilde{E}_d(\frac{1-\sqrt{1-\lambda}}{2}) \text{ is supersingular at } p\\
u_p^2  & \text{if } \widetilde{E}_d(\frac{1-\sqrt{1-\lambda}}{2}) \text{ is ordinary at } p \text{ and }\left(\frac{1-\l}p \right)=1, \\

\left( \frac{k_d}{p} \right)u_p^2  & \text{if } \widetilde{E}_d(\frac{1-\sqrt{1-\lambda}}{2}) \text{ is ordinary at } p \text{ and }\left(\frac{1-\l}p \right)=-1, 
\end{cases}
   \]
and in the ordinary case, $u_p$ is a fixed $p$-adic unit root of the geometric Frobenius at $p$ acting on the first cohomology of the elliptic curve $\widetilde{E}_d(\frac{1-\sqrt{1-\lambda}}{2})$ and $k_2=k_6=-1$, $k_3=-3$, $k_4=-2$.  
\end{theorem}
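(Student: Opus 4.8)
The plan is to reduce the statement to the arithmetic of $\widetilde E_d$ at $t=\frac{1-\sqrt{1-\l}}{2}$ by exhibiting the ${}_3F_2$ as a symmetric square. First I would record the closed form underlying Lemma~\ref{lem:4F3prod}: Clausen's formula gives
\[
{}_2F_1\!\left(\tfrac{1}{2d},\tfrac{d-1}{2d};1;z\right)^2={}_3F_2\!\left(\tfrac12,\tfrac1d,\tfrac{d-1}{d};1,1;z\right)=F_{0,(\frac12,\frac1d,\frac{d-1}{d})}(z),
\]
and the quadratic transformation ${}_2F_1(\tfrac{1}{2d},\tfrac{d-1}{2d};1;4t(1-t))={}_2F_1(\tfrac1d,\tfrac{d-1}{d};1;t)$ then yields, for $\l=4t(1-t)$,
\[
F_{0,(\frac12,\frac1d,\frac{d-1}{d})}(\l)=F_{0,(\frac1d,\frac{d-1}{d})}(t)^2 .
\]
Since $\pi\cdot F_{0,(\frac1d,\frac{d-1}{d})}(t)$ is a fundamental period of $\widetilde E_d(t)$, this realizes the left-hand series as living on $\Sym^2 H^1(\widetilde E_d(t))$; it is this structural fact that forces the answer to be a \emph{square} of a Frobenius eigenvalue.

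Next I would pass to truncations. Under the hypotheses $t$ lies in the unramified quadratic extension of $\Q_p$ and $\widetilde E_d(t)$ has good reduction, so I would analyze $\Frob$ on $H^1_{\mathrm{dR}}$ through Dwork's theory of the unit root of the hypergeometric (Picard--Fuchs) connection. Applied to the symmetric square, this theory shows that in the ordinary case the truncation $[F_{0,(\frac12,\frac1d,\frac{d-1}{d})}]_{p-1}(\l)$ is congruent mod $p^2$ to the unit root $u_p^2$ of $\Frob$ on $\Sym^2 H^1(\widetilde E_d(t))$, where $u_p$ is the unit root on $H^1$ itself. Equivalently, and this is the viewpoint of Section~\ref{sec:appendix}, the truncated sum is a finite hypergeometric function of Beukers--Cohen--Mellit, and the Dwork congruences together with the Gross--Koblitz formula identify its value mod $p^2$ with that unit root. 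In the supersingular case $a_p\equiv 0\pmod p$ forces both slopes of $H^1$ to equal $\tfrac12$, hence all three slopes of $\Sym^2$ equal $1$; there is no unit root and the truncation vanishes mod $p^2$, giving $L=0$.

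It remains to pin down the quadratic factor. When $\left(\frac{1-\l}{p}\right)=1$ the extension $\Q(\sqrt{1-\l})$ splits at $p$, so $t\in\Z_p$, $\widetilde E_d(t)$ is defined over $\Q_p$, and the answer is exactly $u_p^2$. When $\left(\frac{1-\l}{p}\right)=-1$ the prime is inert, $t$ is not in $\Z_p$, and geometric Frobenius swaps $t\leftrightarrow 1-t$ because $\sqrt{1-\l}\mapsto -\sqrt{1-\l}$. The curves $\widetilde E_d(t)$ and $\widetilde E_d(1-t)$ are interchanged by the normalizing involution $W$ of Table~\ref{fams}, which presents them as quadratic twists by $k_d$; tracking this twist through the Frobenius action multiplies $u_p^2$ by $\left(\frac{k_d}{p}\right)$, and the explicit $W$ recorded in Table~\ref{fams} produces $k_2=k_6=-1$, $k_3=-3$, $k_4=-2$. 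This is consistent with the companion Theorem~\ref{thm:WIN2a}: there $F_\alpha$ corresponds to the middle eigenvalue of $\Sym^2$, namely $\det\Frob=p$, so one always lands on $\pm p$, whereas here $F_0$ corresponds to the pure square $u_p^2$.

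The hard part will be precision. Establishing the unit-root congruence mod $p^2$ rather than merely mod $p$ requires Dwork's refined congruences for the excess terms in the logarithmic derivative of the Frobenius structure, and one must check that truncating at $p-1$ is exactly the right length for these to close up. A second delicate point is the inert case: identifying the quadratic twist relating $t$ and $1-t$ for each $d$, and confirming it contributes precisely $\left(\frac{k_d}{p}\right)$ --- including consistency with the sign convention for $\widetilde E_2$ noted after Table~\ref{fams} --- is where the case-by-case CM bookkeeping, and the determination of the constants $k_d$, genuinely enters.
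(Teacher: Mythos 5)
Your overall strategy --- Clausen's formula to realize the ${}_3F_2$ on $\Sym^2 H^1$ of $\widetilde E_d(t)$, a case split on $\left(\frac{1-\l}{p}\right)$, and the $W$-isogeny/quadratic-twist relation between $\widetilde E_d(t)$ and $\widetilde E_d(1-t)$ to produce the factor $\left(\frac{k_d}{p}\right)$ --- is the same as the paper's, and you have correctly located where the difficulty sits. But the mechanism you propose for the crucial mod $p^2$ precision does not close the gap. Generic Dwork unit-root theory (and likewise the Beukers--Cohen--Mellit character-sum comparison, cf.\ Remark \ref{rmk: Hp and F}) only identifies the truncation at $p-1$ with the unit root modulo $p$; the lift to $p^2$ is exactly the supercongruence content of the theorem and is not a formal consequence of ``Dwork's refined congruences.'' What the paper actually uses is the CM hypothesis in an essential way: the endomorphism $[\pi^\ast]$ (multiplication by $p/u_p$ in the split case, and in the inert case a square root of $[p^2/u_p^2]$ acting as an isogeny interchanging $E_\pm=\widetilde E_d(\frac{1\pm\sqrt{1-\l}}{2})$) furnishes an explicit degree-$p$ Frobenius lift $\Phi$; comparing the coefficient of $\xi^p$ in $\widetilde\Phi^\ast\omega=\pi^\ast\omega$ gives $a(p-1)\equiv\Delta u_p\pmod{pA}$, and the Coster--Van Hamme / Stienstra--Beukers congruences upgrade this to $[F_{0,(\frac1d,\frac{d-1}{d})}]_{p-1}(v_\pm)\equiv\varepsilon\Delta_\pm u_p\pmod{p^2A}$. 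Without this CM-specific input, your sentence ``Dwork's theory shows the truncation is congruent mod $p^2$ to the unit root'' is an assertion of the theorem rather than a proof of it.

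A second, related gap: you apply Clausen's identity to the full power series and then truncate, but $F_{0,(\frac12,\frac1d,\frac{d-1}{d})}(4t(1-t))=F_{0,(\frac1d,\frac{d-1}{d})}(t)^2$ does not automatically descend to a congruence of truncations at $p-1$. The paper proves a truncated Clausen formula modulo $p^2$ (Lemma \ref{lemma: p-Clausen}), and the factor $\left(\frac{k_d}{p}\right)$ in the inert case enters precisely there, via the point-count symmetry $a_q(\widetilde E_d(t))=\phi_q(k_d)\,a_q(\widetilde E_d(1-t))$ of Proposition \ref{prop: traces} combined with \cite[Lemma 18]{WIN2} --- it is not simply ``tracked through the Frobenius action.'' Your supersingular argument has the same character: the absence of a unit root gives vanishing modulo $p$ for free, but the vanishing modulo $p^2$ again requires the multiplication-by-$[-p]$ Frobenius lift from \cite{WIN2}. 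So the skeleton of your plan is the right one, but each of the three points you flag as ``delicate'' in fact requires constructing the CM endomorphism as an explicit Frobenius lift on the formal group, which your outline never does.
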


We note that the statement of \cite[Thm. 2]{WIN2} is missing the case when $\left(\frac{1-\l}p \right)=-1$.  A proof of this missing case is supplied in the Appendix \S \ref{sec: WIN2proof}.  We further note that computationally Theorem~\ref{thm:WIN2b} appears to hold modulo $p^3$ for ordinary primes, however for supersingular primes the result only holds modulo $p^2$ generically. This is discussed further in Section \ref{sec: modp3}.

The proofs of Theorems \ref{thm:WIN2a} and \ref{thm:WIN2b} given by Chisholm, et al. in \cite{WIN2} utilize the connection between periods of elliptic curves and formulas for $1/\pi$ and their $p$-adic versions.  We summarize these ideas here.

Recall that the de Rham space $H_{DR}^1(\widetilde{E}_d(t)/ \C[[t]])$ is 2-dimensional, where one of its spanning vectors is given by the class $\omega_t$ of the standard invariant holomorphic differential 1-form (modulo exact forms). Another spanning vector can be obtained via the Gauss-Manin connection. For the families of elliptic curves $\widetilde{E}_d(t)$, such a vector is given by $\partial_t \omega_t$, where $\partial_t$ denotes the partial derivative with respect to $t$ \cite{Beukers-Stienstra, Katz70, Katz72, Katz, Manin, Yoshida-DE}. 

On the other hand,  if  $\widetilde{E}_d(t)$ has CM, then its endomorphism ring $R$ corresponds to an order in an imaginary quadratic field $K=\Q(\sqrt{-D})$ with $-D$ a fundamental discriminant. Moreover, $R$ induces an action on $H_{DR}^1(\widetilde{E}_d(t); \C[[t]])$ for which $\omega_t$ is a common  eigenvector. By Chowla and Selberg \cite{ChowlaSelberg},  for any $C \in H_1(\widetilde{E}_d(t), \Z)$, the period $\int_C\omega_t$ satisfies 
\begin{equation}
\label{period}
\int_C \omega_t \sim  \pi \Omega_K,
\end{equation}
where $\sim$ denotes equivalence up to multiplication by an algebraic number. Here, $\Omega_K \in \C^\times$ is uniquely determined by $K$ provided by the so-called Chowla-Selberg formula (see for example \cite{ChowlaSelberg} or \cite[\S 6.3]{BGHZ}), 
\begin{equation}
\label{omegadef}
\Omega_K :=  \frac{1}{\sqrt{\pi}}  \prod_{j=1}^{D-1} \Gamma \left( \frac{j}{D}\right)^{\chi(j) \cdot \frac{n}{4h}},
\end{equation}
where $n$ is the order of the unit group in $K$, $\chi$ is the quadratic Dirichlet character modulo $D$ for $K$, and $h$ is the class number of $K$.

There is another common eigenvector $\nu_t$ for the action of $R$ on $$H_{DR}^1(\widetilde{E}_d(t); \C[[t]]) = \Span_\C(\{[\omega_t], [\partial_t \omega_t]\}),$$ which is coming from a differential of the second kind.  From the Legendre relation as well as work of both Masser and Chowla--Selberg \cite{ChowlaSelberg}, 
\begin{equation}
\label{CS}
    \int_C \omega_t \cdot \int_C \nu_t \sim \pi,
\end{equation}  and  there is a unique algebraic number $c_t$  such that $\nu_t=\omega_t + c_t \cdot t  \partial_t \omega_t$.

 The $p$-adic analog of \eqref{CS} follows from the formal group of the elliptic curve.  See \cite[Ch. IV]{Silverman} for background on formal groups, and \cite{LL, Beukers-Stienstra} for details on the role formal group laws play in the existence of congruences.  Expand
\begin{equation}
  \label{def:wv}
   \omega_t=\sum_{n\ge 0} a_t(n)\xi^n d\xi \qquad \text{ and } \qquad  \nu_t=\sum_{n\ge 0} b_t(n)\xi^n d\xi,
\end{equation}
where $\xi=-\frac{x}{y}$ is a local uniformizer of $\widetilde{E}_d(t)$ at infinity over $\Z_p$. For a fixed algebraic integer $t$ of degree $n$ over $\F_p$, we denote by  
\begin{equation}
\label{def:A}
    A := W(\F_p(t)),
\end{equation} the ring of Witt vectors of $\F_p(t)$, which is isomorphic to $\Z_p[\zeta_{p^n-1}]$, the ring of integers of the unique unramified extension of degree $n$ over $\Q_p$. For the cases explored in this paper, we either have $A=\Z_p$, or $A$ is the maximal order in the quadratic unramified extension of $\Q_p$.

When the parameter $t$ is specified at values $\mu$, the elliptic curve $\widetilde{E}_d(\mu)$ has eigenvectors for the action of $R$ given by $\omega := \omega_\mu $ and $\nu := \nu_\mu$.  In this setting we drop the subscripts to simplify the notation. Moreover, via the isomorphism in \cite[Lem. 5.1.2]{Katz-crystalline}, we can identify $\omega$ and $\nu$ via their formal integrals by 
\begin{equation}
  \label{def:wvInt}
   \omega=\sum_{n\ge 1} \frac{a(n-1)}{n}\xi^n \qquad \text{ and } \qquad   \nu=\sum_{n\ge 1} \frac{b(n-1)}{n}\xi^n.
\end{equation} Following \cite[Prop. 16]{WIN2} and \cite[Thm. 6.1]{Katz-crystalline}, we obtain the following $p$-adic version of \eqref{CS}.

\begin{lemma}\label{lem:WIN2Prop16}
Suppose ${\bf b}=(\frac1d, \frac{d-1}{d})$, and $\widetilde{E}_d(\lambda)$ satisfies the conditions of Theorem \ref{thm:main}.  Then,
\[
a(p-1)b(p-1) \equiv \begin{cases} p &\text{ if } \widetilde{E}_d(\lambda) \text{ is ordinary at }p \\ -p &\text{ if } \widetilde{E}_d(\lambda) \text{ is supersingular at }p \end{cases} \pmod{p^2}. 
\]
\end{lemma}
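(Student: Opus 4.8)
The plan is to convert the mod-$p^2$ claim about the formal coefficients $a(p-1),b(p-1)$ into a statement about the crystalline Frobenius of $\widetilde E_d(\l)$, using the CM hypothesis in an essential way. First I would recall that, through \eqref{def:wvInt} and the identification of \cite[Lem.~5.1.2]{Katz-crystalline}, the classes $\omega$ and $\nu$ are precisely the two common eigenvectors for the CM order $R$ acting on $H^1_{DR}(\widetilde E_d(\l))$, realized via their formal integrals. Since $\widetilde E_d(\l)$ has good reduction at $p$ and $p$ is unramified in $\Q(\mu)$, the reduction inherits the CM and the geometric Frobenius $\Frob_p$ lies in (an order of) $R$; hence $\Frob_p$ commutes with the $R$-action and is simultaneously diagonalized by $\omega,\nu$. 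Its eigenvalues $\pi,\ol\pi$ are the roots of the characteristic polynomial $T^2-a_pT+p$, so $\pi\ol\pi=p$.

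The core input is \cite[Thm.~6.1]{Katz-crystalline}, the $p$-adic analogue of the Chowla--Selberg--Legendre relation \eqref{CS}. I would use it to express the product $a(p-1)b(p-1)$ modulo $p^2$ in terms of the Frobenius matrix on crystalline cohomology in the basis $\{\omega,\nu\}$. Because this basis diagonalizes $\Frob_p$, the Katz pairing collapses to (a sign times) the determinant of $\Frob_p$, giving $a(p-1)b(p-1)\equiv\pm\,\pi\ol\pi=\pm p\pmod{p^2}$; only the sign remains to be determined.

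To fix the sign I would split according to the splitting behaviour of $p$ in the CM field $K$. In the ordinary case $p$ splits and $\Frob_p$ has a genuine $p$-adic unit root $u_p$; by Dwork's unit-root theory (equivalently the Cartier--Manin operator) the holomorphic eigenvector gives $a(p-1)\equiv u_p\pmod p$, a unit, while the refinement in \cite[Thm.~6.1]{Katz-crystalline} gives $b(p-1)\equiv \ol{u}_p=p/u_p\pmod{p^2}$ for the second-kind eigenvector, so $a(p-1)b(p-1)\equiv u_p\ol{u}_p=p\pmod{p^2}$. In the supersingular case $p$ is inert, there is no unit root, the Hasse invariant $a(p-1)\bmod p$ vanishes, and the formal group has height $2$; here $\Frob_p$ interchanges the two eigenlines modulo $p$ and satisfies $\Frob_p^2=[-p]$, and feeding this into \cite[Thm.~6.1]{Katz-crystalline} produces the extra minus sign, yielding $a(p-1)b(p-1)\equiv -p\pmod{p^2}$.

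I expect the supersingular sign to be the main obstacle: the clean eigenvector picture degenerates, the height-$2$ formal group forces careful bookkeeping of the Cartier operator and of the off-diagonal entries of $\Frob_p$ modulo $p^2$, and it is precisely this analysis (matching \cite[Prop.~16]{WIN2}) that separates $-p$ from $+p$. A secondary point to verify is that all congruences take place in the correct ring of Witt vectors $A$ from \eqref{def:A}, so that they are valid modulo $p^2$ in $A$ and specialize to the asserted $\Zp$-congruence.
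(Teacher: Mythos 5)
Your proposal follows essentially the same route as the paper: the paper's proof also identifies $\omega,\nu$ as common eigenvectors for the CM order, invokes the degree-$p$ (resp.\ degree-$p^2$) Frobenius lifting commuting with the $R$-action, applies \cite[Thm.~6.1]{Katz-crystalline} to get the quadratic relation annihilating $\omega$, and then defers to the eigenvalue/sign analysis of \cite[Prop.~16]{WIN2} exactly as you outline (ordinary: $a(p-1)\equiv$ unit, $b(p-1)\equiv p/u_p$ up to a root of unity $\Delta$ with $\Delta^2\equiv1$; supersingular: $\Phi^2=[-p]$ forcing the sign $-p$). Your sketch is a correct and somewhat more explicit rendering of what the paper compresses into ``following the proof directly.''
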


\begin{proof}
We observe that using the notation of Chisholm et al. \cite{WIN2}, $A=\Z_p$, $\omega, \nu$ are eigenfunctions of the endomorphism ring $R$ of the elliptic curve, 
and from the proof of \cite[Thms.1,2]{WIN2}, there exists a degree $p$ (resp. $p^2$) Frobenius lifting $\Phi$ that commutes with the induced action of $R$ on the deRham coholomogy in the ordinary (resp. supersingular) case.  Thus, we can apply Katz \cite[Thm. 6.1]{Katz-crystalline} to obtain 
\[
a(p^2-1) - \text{tr}(\Phi) a(p-1) + p \equiv 0 \pmod{p^2},
\]
 
Here this corresponds with \cite[(5.1)]{WIN2} with $\sigma$ the identity map, $r=2$, $n=1$ and furthermore, $\omega$ is annihilated  by $\Phi^2-\text{tr}(\Phi)\Phi+p$.  As this is the same setting as \cite[Prop. 16]{WIN2}, this time $\widetilde{E}_d(\lambda)$ being defined over $\Q(\lambda)$ and $p>3$ such that $\Q_p(\lambda) \cong \Q_p$,   following the proof directly yields that
\[
a(p-1)b(p-1) \equiv \begin{cases} p &\text{ if } \widetilde{E}_d(\lambda) \text{ is ordinary at }p \\ -p &\text{ if } \widetilde{E}_d(\lambda) \text{ is supersingular at }p \end{cases} \pmod{p^2}.
\]
 \end{proof}

\subsection{Implications of modularity}
\label{sec:modform}

 The modularity of the functions $Z(\tau)$ in Corollary \ref{cor:WIN5} allows us to obtain information related to the elliptic curves  in Table~\ref{fams}. 
 In \eqref{CS}, we write $\pi$ in terms of periods of elliptic curves. We can make  this connection explicit via modular forms expressed as hypergeometric  series.  The Picard-Fuchs equation of $\widetilde{E}_d(t)$ has holomorphic solution around $t=0$  given  by $F(t) := F_{0,(\frac1d, \frac{d-1}{d})}(t)$ (see \cite{AAR}). Suppose we choose a Hauptmodul $t(\tau)$ of $\Gamma$ such that $F(t(\tau))$ is a weight $1$ modular form, and consider the elliptic curve $\widetilde{E}_d(t(z_0))$  for a fixed CM point $z_0 \in \HH$. It follows from \eqref{period} and the definition of $\Omega_K$ that we can choose a suitable 1-cycle (path) $C$ such that given $\omega$, i.e. the standard invariant holomorphic differential $1$-form on $\widetilde{E}_d(t(z_0))$, then 
$$
  \int_C \omega \sim \pi\cdot F(t(z_0)),
$$
where $\sim$ denotes equivalence up to multiplication by an algebraic number.  
One can restate \eqref{CS} by considering the quasi-period 
 $$
   \int_C  v=\left(B\int_C \omega + t\partial_t \int_C \omega\right) \bigg |_{t=t(z_0)} 
$$
for a specific algebraic number $B\in \overline{\Q}$. 
Thus with $z_0$ as above such that $F(t(z_0))\ne 0$, there exists a unique algebraic number $B(z_0)\in \overline{\Q}$ such that
\begin{equation}
\label{eq:givesB}
F(t(z_0))\left(t\frac{dF}{dt}|_{t=t(z_0)} + B(z_0) F(t(z_0))\right) \sim \frac{1}{\pi}.  
\end{equation}
 From the proof of \cite[Lemma 9]{WIN2}, the specific value of $B(z_0)$ is given by
\begin{equation}\label{eqn:constant c}
B(z_0)=-t(z_0)\frac{d \tau}{d t}\left(\frac 1{F(t(z_0))} \frac{dF}{dt}\bigg|_{t=t(z_0)} - \frac1{4\pi \text{Im}(z_0)}\right).
\end{equation}

By the uniqueness of $B(z_0)$, \eqref{eq:givesB} is the hypergeometric analog of \eqref{CS} and we can explicitly write the quasi-period $\int_C \nu$  as
$$ \frac 1{\pi}\int_C \nu \sim t(z_0)\frac{dF}{dt}\bigg|_{t=t(z_0)} \!\! +B(z_0)F(t(z_0)), $$  with $B(z_0)$ as above.

We further obtain the following lemma to certify that our hypergeometric evaluations are the periods and quasi-periods coming from the eigenvectors of the endomorphism ring of the associated CM elliptic curve $\widetilde{E}_d(X(z_0))$ given in  Corollary~\ref{cor:WIN5}.

\begin{lemma}\label{lem: eigenvector} 
Suppose $X(\tau)$ and $Z(\tau)= F_{0,(\frac1d,\frac{d-1}{d})}(X(\tau))^2$ satisfy all of the conditions in Corollary~\ref{cor:WIN5} with $\gamma=\left(\begin{smallmatrix}a&b\\c&-a\end{smallmatrix}\right)$, and $\tau_0=\frac{a}{c}+ \frac{i}{c\sqrt N}$ is a CM point with $\gamma\tau_0\in D$.
Then,
\[
2\alpha_N = \frac{1}{B(\gamma\tau_0)}  \in  \Q(X(\gamma\tau_0), X(\tau_0)).
\]
\end{lemma}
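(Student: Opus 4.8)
The plan is to reduce the claimed identity $2\alpha_N=1/B(\gamma\tau_0)$ to the \emph{uniqueness} of the algebraic number $B(\gamma\tau_0)$ satisfying \eqref{eq:givesB}, and then to obtain the field membership by a direct computation with the data of Corollary~\ref{cor:WIN5}. \textbf{Step 1 (reading $F_{2\alpha_N,{\bf b}}$ as a quasi-period).} Writing $F(t)=F_{0,(\frac1d,\frac{d-1}d)}(t)$, definition \eqref{def:F_a} and $t\,F'(t)=\sum_{k\ge0}k\frac{(b_1)_k(b_2)_k}{(1)_k^2}t^k$ give at once
\[
F_{2\alpha_N,{\bf b}}(t)=\sum_{k\ge0}(2\alpha_N k+1)\frac{(b_1)_k(b_2)_k}{(1)_k^2}t^k=F(t)+2\alpha_N\,t\,F'(t).
\]
Hence, with $\lambda=X(\gamma\tau_0)$ and after factoring out $2\alpha_N$,
\[
F(\lambda)\,F_{2\alpha_N,{\bf b}}(\lambda)=2\alpha_N\,F(\lambda)\!\left(\lambda F'(\lambda)+\tfrac1{2\alpha_N}F(\lambda)\right).
\]
\textbf{Step 2 (identifying $B(\gamma\tau_0)$).} By Corollary~\ref{cor:WIN5} the left-hand side equals $\delta_N/\pi$, so
\[
F(\lambda)\!\left(\lambda F'(\lambda)+\tfrac1{2\alpha_N}F(\lambda)\right)=\frac{\delta_N}{2\alpha_N\,\pi}.
\]
In the present case ${\bf b}=(\frac1d,\frac{d-1}d)$ the weight is $k=2$, and $\delta_N=\frac{c\sqrt N}{v(\tau_0)}$, $\alpha_N=\frac{2NU(\gamma\tau_0)}{v(\tau_0)}$ are algebraic by Step~3 (with $c,\sqrt N\in\overline{\Q}$), so $\frac{\delta_N}{2\alpha_N\pi}\sim\frac1\pi$. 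Thus $B=\frac1{2\alpha_N}$ is an algebraic number satisfying \eqref{eq:givesB} at $z_0=\gamma\tau_0$ (note $F(\lambda)\ne0$, since $Z(\gamma\tau_0)=F(\lambda)^2\ne0$ at the CM point $\gamma\tau_0\in D$). By the uniqueness of the algebraic solution of \eqref{eq:givesB}, I conclude $B(\gamma\tau_0)=\frac1{2\alpha_N}$, i.e. $2\alpha_N=\frac1{B(\gamma\tau_0)}$.

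\textbf{Step 3 (field membership).} It remains to show $2\alpha_N\in\Q(X(\gamma\tau_0),X(\tau_0))$, for which I use $\alpha_N=\frac{2NU(\gamma\tau_0)}{v(\tau_0)}$ with $v(\tau_0)=U(\tau_0)X(\tau_0)\big(\frac{dM_N}{dX}\big)\big|_{X=X(\tau_0)}$. Since $\frac1{2\pi i}\frac{dX}{d\tau}=U\,X\,Z$ and $XZ$ are both weight-$2$ forms for $\Gamma$, the quotient $U$ is a weight-$0$ modular function, hence a rational function of the Hauptmodul $X$; therefore $U(\gamma\tau_0)\in\Q(X(\gamma\tau_0))$ and $U(\tau_0)\in\Q(X(\tau_0))$. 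Likewise $M_N(\tau)=Z(\tau)/Z(N\tau)$ is weight $0$, so it too is a rational function of $X$ via the modular equation relating $X(N\tau)$ and $X(\tau)$, whence $\big(\frac{dM_N}{dX}\big)\big|_{X=X(\tau_0)}\in\Q(X(\tau_0))$. Combining these, $v(\tau_0)\in\Q(X(\tau_0))$ and $\alpha_N=\frac{2NU(\gamma\tau_0)}{v(\tau_0)}\in\Q(X(\gamma\tau_0),X(\tau_0))$, and the same holds for $2\alpha_N$; this also supplies the algebraicity invoked in Step~2.

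\textbf{Main obstacle.} The crux is Step~2: upgrading two relations asserted only up to an algebraic factor (``$\sim\frac1\pi$'') into the exact equality $B(\gamma\tau_0)=\frac1{2\alpha_N}$. This is precisely where the uniqueness of the algebraic $B(z_0)$ is indispensable, and that uniqueness rests on a Chudnovsky-type transcendence input, namely that $F(\lambda)^2\sim\Omega_K^2$ is not an algebraic multiple of $1/\pi$ (equivalently $\pi\,\Omega_K^2\notin\overline{\Q}$), exactly as in the proof of \cite[Lemma 9]{WIN2}; without it the two admissible constants could a priori differ. A secondary technical point is matching conventions—identifying the Hauptmodul $t$ of the modularity discussion with $X$, and verifying $U=1-X$ in the normalized families of Table~\ref{fams}—so that \eqref{eq:givesB} and Corollary~\ref{cor:WIN5} refer to the same differentials $\omega,\nu$ on $\widetilde E_d(\lambda)$.
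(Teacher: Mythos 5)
Your proof is correct, but it takes a genuinely different route from the paper's. The paper proves $2\alpha_N=1/B(\gamma\tau_0)$ by a direct chain-rule computation: it starts from the explicit formula \eqref{eqn:constant c} for $B(z_0)$, substitutes $z_0=\gamma\tau_0$, and uses the identity \cite[(17)]{WIN5} relating $Z'(\gamma\tau_0)/Z(\gamma\tau_0)$ to $M_N'(\tau_0)$ to collapse everything to $-v(\tau_0)/(4NU(\gamma\tau_0))=-1/(2\alpha_N)$. You instead argue indirectly: you exhibit $1/(2\alpha_N)$ as an algebraic number satisfying the defining relation \eqref{eq:givesB} at $z_0=\gamma\tau_0$ (via Corollary \ref{cor:WIN5} and the algebraicity of $\delta_N/(2\alpha_N)$), and then invoke the uniqueness of $B(z_0)$. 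This is legitimate — the uniqueness is asserted in \S\ref{sec:modform} and you correctly identify that it rests on a transcendence input — and in fact it mirrors exactly how the paper itself identifies $c_\mu'=2\alpha_N$ in the proof of Proposition \ref{lem:main}, so your argument is shorter and arguably more in the spirit of the rest of the paper. What the paper's computation buys is independence from the transcendence-based uniqueness and an explicit verification that $\alpha_N$ really is the quasi-period coefficient, which is what Proposition \ref{lem:main} needs. Two small points in your Step 3: the assertion that $\bigl(\tfrac{dM_N}{dX}\bigr)\big|_{X=X(\tau_0)}\in\Q(X(\tau_0))$ is too strong, since $M_N$ is only an \emph{algebraic} function of $X$ through the modular polynomial; the correct statement is that it lies in $\Q(X(\tau_0),X(N\tau_0))=\Q(X(\tau_0),X(\gamma\tau_0))$ (using $N\tau_0\equiv\gamma\tau_0\pmod{\Z}$), which still lands in the target field. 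Likewise, the step ``$U$ is weight $0$, hence a rational function of $X$ over $\Q$'' implicitly uses rationality of the $q$-expansions; the paper sidesteps this by noting $U=1-X$ in the cases at hand.
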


\begin{proof}
Recall we define $f':=\frac{1}{2\pi i}\frac{df}{d\tau}$.  Set $F(X):=F_{0,(\frac1d, \frac{d-1}d)}(X)$, and $Z(X)=F(X)^2$. Then $\frac{dF}{F}=\frac 12\frac{dZ}{Z}$. Since $\gamma\tau_0\in D$, we have from \eqref{eqn:constant c}, that 
\[
 -B(\gamma\tau_0)=\frac{X(\gamma\tau_0)}{X'(\gamma\tau_0)}\left( \frac12\frac{Z'(\gamma\tau_0)}{Z(\gamma\tau_0)}-\frac1{4\pi \text{Im}(\gamma\tau_0)}\right).
\]
Thus using \cite[(17)]{WIN5}, we obtain
\begin{align*}
-B(\gamma\tau_0)&=\frac{X(\gamma\tau_0)}{X'(\gamma\tau_0)}\left( \frac1{2}\cdot\frac1{2N}\left(\frac{c\sqrt N}{\pi}+\varepsilon\frac{1}{ N}M_N'(\tau_0)\right)-\frac{c}{4\pi \sqrt N}\right)\\
&=\frac{X(\gamma\tau_0)}{X'(\gamma\tau_0)}\frac1{4N}\frac{\varepsilon}{N}\frac{dM_N}{dX}\bigg\vert_{X=X(\tau_0)}X'(\tau_0)
   =X(\gamma\tau_0) \frac{X'(\tau_0)}{X'(\gamma\tau_0)}\frac{1}{4N}\frac{\varepsilon}{N} \frac{dM_N}{dX}\bigg\vert_{X=X(\tau_0)}\\
&=X(\gamma\tau_0)\frac{XUZ(\tau_0)}{XUZ(\gamma\tau_0)}\frac1{4N}\frac{\varepsilon}{N} \frac{dM_N}{dX}\bigg\vert_{X=X(\tau_0)}
=X(\gamma\tau_0)\frac{-N Z(\tau_0)XU(\tau_0)}{\varepsilon Z(\tau_0)XU(\gamma\tau_0)}\frac1{4N}\frac{\varepsilon}{N} \frac{dM_N}{dX}\bigg\vert_{X=X(\tau_0)}\\
    &=-\frac1{4N}\frac{U(\tau_0)}{U(\gamma\tau_0)} X(\tau_0)\frac{dM_N}{dX}\bigg\vert_{X=X(\tau_0)}
    =-\frac{v(\tau_0)}{4NU(\gamma\tau_0)}\\
    &= -\frac{1}{2\alpha_N}. 
\end{align*}
As in Remark \ref{rmk: forms}, we have in these cases that $U=1-X$, and thus the value $\alpha_N$ is indeed in the field $\Q(X(\gamma\tau_0), X(\tau_0))$. 
\end{proof}

We next observe the following useful lemma.

\begin{lemma} \label{lem:deltaquotioent}
Suppose $X(\tau)$ and $Z(\tau)= F_{0,(\frac12, \frac1d,\frac{d-1}{d})}(X(\tau))^2$ satisfy the conditions in Corollary \ref{cor:WIN5} with $\gamma=\left(\begin{smallmatrix}a&b\\c&-a\end{smallmatrix}\right)$, and $\tau_0=\frac{a}{c}+ \frac{i}{c\sqrt N}$ is a CM point with $\gamma\tau_0\in D$.  Then 
\[
\frac{\delta_N}{F_{0,(\frac12, \frac1d,\frac{d-1}{d})}(X(\gamma\tau_0))} \in \overline{\Q}.
\]
\end{lemma}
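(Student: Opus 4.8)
The plan is to divide the Ramanujan--Sato factorization supplied by Corollary~\ref{cor:WIN5} and then reduce the resulting transcendence question to the period--quasi-period relation for the CM elliptic curve. Writing ${\bf b}=(\frac12,\frac1d,\frac{d-1}d)$ and $\l=X(\gamma\tau_0)$, Corollary~\ref{cor:WIN5} gives $\delta_N/\pi=F_{0,{\bf b}}(\l)\,F_{2\alpha_N,{\bf b}}(\l)$, so that
\[
\frac{\delta_N}{F_{0,{\bf b}}(X(\gamma\tau_0))}=\pi\,F_{2\alpha_N,{\bf b}}(\l),
\]
and it suffices to prove the right-hand side lies in $\ol\Q$. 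Directly from the definition \eqref{def:F_a} one has $F_{2\alpha_N,{\bf b}}(\l)=F_{0,{\bf b}}(\l)+2\alpha_N\l\,\partial_\l F_{0,{\bf b}}(\l)$, and I would then apply Clausen's identity $F_{0,{\bf b}}(\l)=F(\mu)^2$, with $F:=F_{0,(\frac1d,\frac{d-1}d)}$, $\mu=\frac{1-\sqrt{1-\l}}2$, and $\l=4\mu(1-\mu)$, which is the ${}_3F_2$-to-$({}_2F_1)^2$ reduction underlying the hypotheses of Theorem~\ref{thm:main}.

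Carrying out the change of variables $\l\mapsto\mu$ (using $d\l/d\mu=4(1-2\mu)$) rewrites the quasi-period factor entirely in terms of the elliptic period and its derivative, namely
\[
F_{2\alpha_N,{\bf b}}(\l)=F(\mu)\Big(F(\mu)+\tfrac{4\alpha_N\mu(1-\mu)}{1-2\mu}\,F'(\mu)\Big).
\]
Recall from \eqref{period} and \eqref{CS} that $\pi F(\mu)$ is an algebraic multiple of a period of the CM curve $\widetilde E_d(\mu)$ and that $\pi\big(\mu F'(\mu)+B(z_0)F(\mu)\big)$ is the matching quasi-period, where $z_0$ is the CM point with $X(z_0)=\mu$; by \eqref{eq:givesB} their product $\pi F(\mu)\big(\mu F'(\mu)+B(z_0)F(\mu)\big)$ lies in $\ol\Q$. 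Thus the argument reduces to showing that the combination $F(\mu)+\frac{4\alpha_N\mu(1-\mu)}{1-2\mu}F'(\mu)$ is an algebraic multiple of the quasi-period class $\mu F'(\mu)+B(z_0)F(\mu)$. Matching the coefficients of $F'(\mu)$ and $F(\mu)$ shows this holds exactly when $B(z_0)=\frac{1-2\mu}{4\alpha_N(1-\mu)}$, in which case
\[
\pi F_{2\alpha_N,{\bf b}}(\l)=\tfrac{4\alpha_N(1-\mu)}{1-2\mu}\cdot\pi F(\mu)\big(\mu F'(\mu)+B(z_0)F(\mu)\big)\in\ol\Q,
\]
since $\alpha_N$ and $\mu$ are algebraic; here the transcendental contribution $\pi F(\mu)^2\sim\pi\Omega_K^2$ cancels automatically against the quasi-period term.

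The main obstacle is verifying the value $B(z_0)=\frac{1-2\mu}{4\alpha_N(1-\mu)}$. This is the exact analog, in the Clausen setting, of the identity $2\alpha_N=1/B(\gamma\tau_0)$ proved in Lemma~\ref{lem: eigenvector} for the ${}_2F_1$ family; the differences are that the relevant CM point is the one attached to $\mu$ rather than to $X(\gamma\tau_0)$, and the extra $\tfrac12$-parameter introduces the Jacobian factor $1-2\mu$ coming from $\l=4\mu(1-\mu)$. I would establish it by repeating the computation of Lemma~\ref{lem: eigenvector} using formula \eqref{eqn:constant c} for $B(z_0)$, now with $Z=F(X)^2$ pulled back along $\mu=\frac{1-\sqrt{1-X}}2$, and tracking how $\alpha_N$ transforms under this substitution. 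Once that matching is in hand the lemma follows immediately from the Legendre/Chowla--Selberg relation, with no further transcendence input needed.
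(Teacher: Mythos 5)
Your reduction is valid as far as it goes, but it runs in the opposite logical direction from the paper, and the step you flag as ``the main obstacle'' is in fact the entire content of the lemma under your reduction --- and it is left unproved. Since $\pi F(\mu)^2\sim\pi\Omega_K^2$ is transcendental, the algebraicity of $\pi F_{2\alpha_N,{\bf b}}(\l)=\pi F(\mu)\bigl(F(\mu)+\tfrac{4\alpha_N\mu(1-\mu)}{1-2\mu}F'(\mu)\bigr)$ is \emph{equivalent}, by the uniqueness in \eqref{eq:givesB}, to the identity $B(z_0)=\tfrac{1-2\mu}{4\alpha_N(1-\mu)}$; so nothing is gained until that identity is established independently. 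Worse, the place where the paper proves that identity is the proof of Proposition~\ref{lem:main} (the statement $c_\mu'=2\alpha_N$, i.e.\ $c_\mu=\tfrac{4\alpha_N(1-\mu)}{1-2\mu}$), and that derivation \emph{uses} Lemma~\ref{lem:deltaquotioent}: it is precisely the algebraicity of $\delta_N/F_{0,{\bf b}}(\l)$ that allows the uniqueness argument for $c_\mu$ there. You therefore cannot borrow it, and your proposed substitute --- repeating the computation of Lemma~\ref{lem: eigenvector} with $Z$ pulled back along $\mu=\frac{1-\sqrt{1-X}}{2}$ --- is only a sketch: that lemma is proved for the weight-$2$ setting $Z=F_{0,(\frac1d,\frac{d-1}d)}(X)^2$, and the weight-$4$/Clausen analogue requires the weight-$4$ version of the functional-equation identity \cite[(17)]{WIN5}, the factor $\frac14$ (rather than $\frac12$) in $dF/F=\frac1k\,dZ/Z$, and the Jacobian $\tfrac{\mu}{\l}\tfrac{d\l}{d\mu}=\tfrac{1-2\mu}{1-\mu}$ of the degree-$2$ cover. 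The bookkeeping plausibly closes up, but as written the proof has a genuine hole at its crux.

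For comparison, the paper's proof avoids periods and quasi-periods entirely. It uses only the Chowla--Selberg fact that a meromorphic modular form of weight $k$ with algebraic Fourier coefficients takes values $\sim\Omega_K^k$ at CM points: $Z(\gamma\tau_0)\sim\Omega_K^4$ gives $F_{0,{\bf b}}(X(\gamma\tau_0))\sim\Omega_K^2$, while the weight-$2$ form $UZ=\frac1X\cdot\frac{1}{2\pi i}\frac{dX}{d\tau}$ gives $U(\tau_0)\sim\Omega_K^{-2}$ and hence, from $\delta_N\cdot U(\tau_0)X(\tau_0)\tfrac{dM_N}{dX}\big|_{X=X(\tau_0)}\sim1$, that $\delta_N\sim\Omega_K^2$ as well; the quotient is then algebraic with no quasi-period input. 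If you want to keep your route, the cleanest repair is to prove the lemma the paper's way first and then read off $B(z_0)=\tfrac{1-2\mu}{4\alpha_N(1-\mu)}$ as a corollary, exactly as Proposition~\ref{lem:main} does, rather than the other way around.
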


\begin{proof}
Recall that from the classical result of modular forms with CM-values  (see \cite[Proposition 5]{WIN2},  \cite[\S 5.10]{Cohen-Stromberg}, or \cite[Proposition 26]{Zagier} for example), if $f$ is a meromorphic modular form of weight $k$ with algebraic Fourier coefficients at a CM point $z_0\in K\cap \HH$, then 
$$
   f(z_0) \sim \Omega_K^k. 
$$
In the cases under consideration, the modular form $Z(\tau)$ is a weight-$4$ modular form whose Fourier expansion has integer coefficients. Therefore $Z(\gamma\tau_0) \sim \Omega_K^4$, where $K=\Q(c\sqrt{-N})$. Also, from the expression of $\delta_N$ in  Corollary  \ref{cor:WIN5} we have
$$
  U(\tau_0)X(\tau_0) \left(\frac{dM_N}{dX}\right) \bigg|_{X=X(\tau_0)} \!\!\! \cdot  \delta_N \sim 1,
$$
where 
$X(\tau_0)$, $ \left(\frac{dM_N}{dX}\right) \left|_{X=X(\tau_0)} \right.$ are algebraic numbers, and 
$UZ(\tau) = \frac{1}{X}\cdot \frac{1}{2\pi i}\frac{dX}{d\tau}$.  Since $UZ(\tau) = \frac{1}{X}\cdot \frac{1}{2\pi i}\frac{dX}{d\tau}$ is a meromorphic modular form of weight $2$ and has integer Fourier coefficients, we have
$$
   UZ(\tau_0)  \sim \Omega_K^2.  
$$
Together with the fact  
$Z(\tau_0)    \sim \Omega_K^4$, 
the identity tells us that 
$U(\tau_0)    \sim \Omega_K^{-2}$ and hence
$$
   \delta_N \sim   \Omega_K^2\sim F_{0,(\frac12, \frac1d,\frac{d-1}{d})}(X(\gamma\tau_0)). 
$$
\end{proof}

We now state the following crucial result which is an analogue of \cite[Prop. 16]{WIN2} and is used in the proof of Theorem \ref{thm:main}.  

\begin{proposition}\label{lem:main}
Suppose $X(\tau)$ and $Z(\tau)= F_{0,{\bf b}}(X(\tau))^2$ satisfy all of the conditions in Corollary \ref{cor:WIN5} with $\gamma=\left(\begin{smallmatrix}a&b\\c&-a\end{smallmatrix}\right)$, and $\tau_0=\frac{a}{c}+ \frac{i}{c\sqrt N}$ is a CM point with $\gamma\tau_0\in D$. Suppose further that $\tau_0, \lambda, \mu, p$ satisfy all of the conditions of Theorem \ref{thm:main}. Let $a(n)$ and $b(n)$ be defined by \eqref{def:wvInt}.  Then, we have that

\begin{align*}
a(p-1) \equiv  [F_{0,(\frac{1}{d}, \frac{d-1}{d})}]_{p-1}(\mu) \pmod{p \Z_p[\mu]},  \\
[F_{0,(\frac{1}{d}, \frac{d-1}{d})}]_{p-1}(\mu)\cdot[F_{c_\mu,(\frac{1}{d}, \frac{d-1}{d})}]_{p-1}(\mu) \equiv  a(p-1)b(p-1)  \pmod{p^2 \Z_p[\mu]}, 
\end{align*}

where
\[
c_\mu = \begin{cases}
         2\alpha_N & \text{ if }  \mu = \lambda \\ 
          \frac{4\alpha_N(1-\mu)}{1-2\mu}=   \frac{4\alpha_N(1-\mu)(1-2\mu)}{1-\l} & \text{ if } \mu = \frac{1 \pm \sqrt{1-\l}}{2}. 
    \end{cases}
\]
\end{proposition}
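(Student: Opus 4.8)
The plan is to establish the two congruences separately, treating $\mu=\lambda$ and $\mu=\frac{1\pm\sqrt{1-\l}}{2}$ in parallel, and in each case reducing the claim to the period expansions \eqref{def:wvInt} of the eigenvectors $\omega,\nu$ on the CM elliptic curve $\widetilde E_d(\mu)$. First I would recall the structure established in \S\ref{CMcurves}: the form $\omega_t=\sum_{n\ge0}a_t(n)\xi^n\,d\xi$ has coefficients governed by the Picard--Fuchs equation of $\widetilde E_d(t)$, whose holomorphic solution at $t=0$ is exactly $F_{0,(\frac1d,\frac{d-1}d)}(t)$. Concretely, the $\xi$-expansion of the invariant differential yields $a(n-1)/n$ as the coefficient of $\xi^n$ in $\omega$, and matching these against the hypergeometric coefficients shows that the partial sum $\sum_{k=0}^{p-1}$ of $F_{0,(\frac1d,\frac{d-1}d)}$ agrees with $a(p-1)$ after the identification from \cite[Lem.~5.1.2]{Katz-crystalline}. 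This is the content of the first displayed congruence; I would verify it by writing $\xi=-x/y$ explicitly for each of the four families $\widetilde E_d$ in the given Weierstrass models, expanding the holomorphic differential, and checking the coefficient indexing modulo $p$.

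For the second congruence I would use that $\nu=\omega+c_\mu\cdot t\,\partial_t\omega$ from \eqref{CS}-adjacent discussion, so that its expansion $\nu=\sum b(n)\xi^n d\xi$ has coefficients $b(n)$ obtained from $a(n)$ by the operator $1+c_\mu\, t\,\partial_t$ acting on the hypergeometric series. The operator $1+c\,t\partial_t$ sends $F_{0,(\frac1d,\frac{d-1}d)}$ to $F_{c,(\frac1d,\frac{d-1}d)}$ (this is exactly the effect visible in the definition \eqref{def:F_a}, since applying $t\partial_t$ multiplies the $k$-th term by $k$, producing the $(\alpha k+1)$ prefactor with $\alpha=c$). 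Hence $b(p-1)\equiv [F_{c_\mu,(\frac1d,\frac{d-1}d)}]_{p-1}(\mu)\pmod{p\Z_p[\mu]}$, and multiplying the two truncated congruences gives the claimed product modulo $p^2$. The value of $c_\mu$ must be pinned down: when $\mu=\lambda$ the relevant curve is parameterized directly and Lemma \ref{lem: eigenvector} identifies $2\alpha_N=1/B(\gamma\tau_0)$ as the constant in $\nu=\omega+B\cdot t\partial_t\omega$, giving $c_\mu=2\alpha_N$; when $\mu=\frac{1\pm\sqrt{1-\l}}2$ one substitutes $t=\frac{1-\sqrt{1-\l}}2$ and applies the change of variables $\l=-4t(t-1)$, using the chain rule $t\partial_t=\frac{t(1-2t)}{\,\cdots\,}\l\partial_\l$ to convert the $\l$-normalized constant $2\alpha_N$ into the $t$-normalized $\frac{4\alpha_N(1-\mu)}{1-2\mu}$, exactly as in the relation $a_2=\frac{2\alpha(1-t)}{1-2t}$ recorded after \eqref{eq:ram}.

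The main obstacle I anticipate is the rigorous justification that the formal integration identity \eqref{def:wvInt} transports the mod-$p$ congruence from the differential-form coefficients $a_t(n)$ to the truncated hypergeometric sum $[F_{0,(\frac1d,\frac{d-1}d)}]_{p-1}(\mu)$ with the correct normalization and no spurious $p$-adic denominators. This requires that $\mu,c_\mu\in\Z_p^\times$ (guaranteed by the embedding hypotheses of Theorem \ref{thm:main}) and that the Witt-vector ring $A$ equals $\Z_p$ or its unramified quadratic extension, so that the coefficient $a(n-1)/n$ is $p$-integral for $1\le n\le p-1$ and the telescoping in the formal group respects the truncation at $\xi^{p}$. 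I would handle this by following \cite[Prop.~16]{WIN2} essentially verbatim, the only new input being the parameter translation under $\l=-4t(t-1)$, and checking that the Legendre-symbol and unit-root bookkeeping is unaffected since $p$ is unramified in $\Q(\mu)$ and $\widetilde E_d(\mu)$ has good reduction. Combining with Lemma \ref{lem:WIN2Prop16}, which supplies $a(p-1)b(p-1)\pmod{p^2}$, then yields both displayed congruences.
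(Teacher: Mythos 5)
Your outline of the mod-$p$ structure is faithful to the paper: the first congruence does come from expanding $\omega$ in the uniformizer $\xi=-x/y$ and matching against the Picard--Fuchs solution (the paper simply cites \cite[Lemma 12]{WIN2} rather than re-expanding each Weierstrass model), and the identification $b_t(p-1)\equiv[F_{c_t,(\frac1d,\frac{d-1}d)}]_{p-1}(t)\pmod p$ via the operator $1+c\,t\partial_t$ is exactly the paper's \eqref{b(p-1)}. The determination of $c_\mu$ by the chain rule under $\l=-4t(t-1)$ is also the right mechanism, though in the case $\mu=\frac{1\pm\sqrt{1-\l}}2$ you still owe the identification of the $\l$-level constant with $2\alpha_N$: the paper gets this from Lemma \ref{lem:deltaquotioent} (algebraicity of $\delta_N/F_{0,(\frac12,\frac1d,\frac{d-1}d)}(\l)$) together with the \emph{uniqueness} of the algebraic number in \eqref{eq:givesB}, since Lemma \ref{lem: eigenvector} as stated only covers the two-parameter ${\bf b}$.

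The genuine gap is in the second congruence. You write that ``multiplying the two truncated congruences gives the claimed product modulo $p^2$,'' but each factor is only controlled modulo $p$, and two congruences modulo $p$ multiply to a congruence modulo $p$, not modulo $p^2$. Invoking Lemma \ref{lem:WIN2Prop16} does not repair this: that lemma evaluates $a(p-1)b(p-1)$ modulo $p^2$, whereas the claim to be proved is that the product of truncations agrees with $a(p-1)b(p-1)$ modulo $p^2$ --- a different statement. (Even in the supersingular case, where $a(p-1)\equiv 0\pmod p$, one needs $[F_{0,(\frac1d,\frac{d-1}d)}]_{p-1}(\mu)\equiv a(p-1)$ to the next order to conclude.) The paper supplies the missing input by splitting into the supersingular and ordinary cases and running the Frobenius-lifting argument on $H^1_{DR}$ --- the multiplication-by-$(-p)$ action as in \cite[Prop.~16]{WIN2} in the supersingular case, and the multiplication-by-$u_p$ action (Remark \ref{rem: action}, Cases 1 and 2 of the proof of Theorem \ref{thm:WIN2b}) in the ordinary case --- which upgrades the relevant congruences to modulo $p^2$ before taking the product. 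Your closing remark that you would ``follow \cite[Prop.~16]{WIN2} essentially verbatim'' points at the right machinery, but as written it is attached to the $p$-integrality of the coefficients $a(n-1)/n$ rather than to the order-$p^2$ refinement, so the step that actually carries the proposition is not present in your argument.
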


Before we prove this proposition, we need a couple facts about power series and derivatives. For a power series $f(z)=\sum_{k\geq 0} c_kz^k \in \C[[z]]$, 
\begin{equation}\label{eq:diff_gen}
\sum_{k\geq 0} \left(ak+1\right) c_k z^k = f(z)+ az\frac{d}{dz}f(z),
\end{equation}
and thus for the hypergeometric functions defined in \eqref{def:F_a},
\begin{equation}\label{eq:diff_basic}
F_{a,{\bf b}}(z) = F_{0,{\bf b}}(z)+ az\frac{d}{dz}F_{0,{\bf b}}(z).
\end{equation}

If $F_{0,{\bf b}}(z)^2 = \sum_{k\geq 0} A_k z^k$, then from \eqref{eq:diff_gen} and \eqref{eq:diff_basic} we have
\begin{equation}\label{eq:sqcoeffs}
\sum_{k\geq 0} \left(ak+1\right) A_k z^k = F_{0,{\bf b}}(z)^2 + 2azF_{0,{\bf b}}(z)\frac{d}{dz}F_{0,{\bf b}}(z) = F_{0,{\bf b}}(z) \cdot F_{2a,{\bf b}}(z).
\end{equation}

In \cite[(3.5)]{WIN2}, the authors show that for $\l$  such that both sides converge,
\begin{equation}\label{eq:geom_clausen}
F_{0,(\frac12,c,1-c)} (\lambda) = F_{0,(c,1-c)}\left(\frac{1\pm \sqrt{1-\l}}{2}\right)^2.
\end{equation}
This enables us to show the following.

\begin{lemma} \label{lem:4F3prod}
Set $t=\frac{1- \sqrt{1-\l}}{2}$, so that $\l = -4t(t-1)$. Then, when both sides converge,
\[
F_{a_1,{(\frac12,c,1-c))}}(\l) =  F_{0,(c,1-c)}(t) \cdot F_{a_2,{(c,1-c))}}(t),
\]
where $a_2 = \frac{2a_1(1-t)}{1-2t}$.
\end{lemma}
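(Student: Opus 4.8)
The plan is to expand both sides of the claimed identity using the differentiation identity \eqref{eq:diff_basic} and then reduce everything to a single-variable statement in $t$ by means of the quadratic transformation \eqref{eq:geom_clausen}. Write $H(t) := F_{0,(c,1-c)}(t)$ and $G(\lambda) := F_{0,(\frac12,c,1-c)}(\lambda)$. By \eqref{eq:geom_clausen} with the minus-sign branch $t = \frac{1-\sqrt{1-\lambda}}{2}$ we have $G(\lambda) = H(t)^2$, and the relation $\lambda = -4t(t-1) = 4t(1-t)$ will serve as the change of variables linking the $\lambda$-derivative on the left to the $t$-derivative on the right.

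First I would apply \eqref{eq:diff_basic} to the left-hand side to get
\[
F_{a_1,(\frac12,c,1-c)}(\lambda) = G(\lambda) + a_1\lambda\,\frac{d}{d\lambda}G(\lambda).
\]
Then I would substitute $G(\lambda) = H(t)^2$ and compute $\frac{d}{d\lambda}G(\lambda) = 2H(t)H'(t)\,\frac{dt}{d\lambda}$ by the chain rule. From $\lambda = 4t(1-t)$ one reads off $\frac{d\lambda}{dt} = 4(1-2t)$, hence $\lambda\,\frac{dt}{d\lambda} = \frac{4t(1-t)}{4(1-2t)} = \frac{t(1-t)}{1-2t}$, so that
\[
F_{a_1,(\frac12,c,1-c)}(\lambda) = H(t)^2 + a_1\,\frac{2t(1-t)}{1-2t}\,H(t)H'(t).
\]
On the right-hand side, applying \eqref{eq:diff_basic} to $F_{a_2,(c,1-c)}(t) = H(t) + a_2\,t\,H'(t)$ and multiplying by $H(t)$ yields
\[
F_{0,(c,1-c)}(t)\cdot F_{a_2,(c,1-c)}(t) = H(t)^2 + a_2\,t\,H(t)H'(t).
\]
Comparing the two expressions, the constant terms $H(t)^2$ agree, and matching the coefficient of $H(t)H'(t)$ forces $a_1\,\frac{2(1-t)}{1-2t} = a_2$, that is $a_2 = \frac{2a_1(1-t)}{1-2t}$, exactly as stated.

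The computation is essentially forced once the quadratic transformation is in hand, so I do not anticipate a serious obstacle; the only points requiring care are (i) selecting the branch of \eqref{eq:geom_clausen} consistent with $t = \frac{1-\sqrt{1-\lambda}}{2}$, and (ii) correctly tracking the factor $\lambda\,dt/d\lambda$ in the chain rule, since any slip there would propagate directly into the formula for $a_2$. The convergence hypothesis ensures that all manipulations take place within the common region of convergence, so that the formal identity of power series upgrades to an equality of analytic functions.
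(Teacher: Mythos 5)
Your proposal is correct and follows essentially the same route as the paper: apply \eqref{eq:diff_basic} to the left side, substitute the quadratic transformation \eqref{eq:geom_clausen}, convert the $\lambda$-derivative to a $t$-derivative via $\lambda\,\frac{dt}{d\lambda}=\frac{t(1-t)}{1-2t}$, and recognize the result as $F_{0,(c,1-c)}(t)\cdot F_{a_2,(c,1-c)}(t)$. The chain-rule bookkeeping and the resulting formula for $a_2$ match the paper's computation exactly.
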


\begin{proof}
Using \eqref{eq:diff_basic} and \eqref{eq:geom_clausen}, we obtain 
\begin{align*}
F_{a_1,{(\frac12,c,1-c)}}(\l) &= F_{0,(c,1-c)}(t)^2 + a_1 \l\frac{d}{d\l}F_{0,(c,1-c)}(t)^2 \\
&= F_{0,(c,1-c)}(t)^2 + 2a_1 F_{0,(c,1-c)}(t)\cdot \l \frac{dt}{d\l} \cdot  \frac{d}{dt}F_{0,(c,1-c)}(t)\\
&= F_{0,(c,1-c)}(t)^2 + \frac{2a_1(1-t)}{1-2t} F_{0,(c,1-c)}(t)\cdot t \frac{d}{dt}F_{0,(c,1-c)}(t).
\end{align*}
Thus applying \eqref{eq:diff_gen} gives the desired result.
\end{proof}

We now prove Proposition \ref{lem:main}.

\begin{proof}[Proof of Prop. \ref{lem:main}]
As in \S \ref{CMcurves} and \eqref{def:wv}, consider the differentials $\omega_t=\sum_{n\ge 0} a_t(n)\xi^n d\xi$ and $\nu_t=\omega_t+ c_t \cdot  t \partial_t \omega_t=\sum_{n\ge 0} b_t(n)\xi^n d\xi$.  By \cite[Lemma 12]{WIN2}, 
\begin{equation}
\label{a(p-1)}
  a_t(p-1) \equiv [F_{0,(\frac{1}{d}, \frac{d-1}{d})}]_{p-1}(t) \pmod{p}.  
\end{equation}

On the other hand, 
\begin{align}
b_t(p-1) & = a_t(p-1)+c_t \cdot t \frac{d}{dt}a_t(p-1) \label{b(p-1)} \\
 & \equiv [F_{0,(\frac{1}{d}, \frac{d-1}{d})} + c_t \cdot t \frac{d}{dt}F_{0,(\frac{1}{d}, \frac{d-1}{d})}]_{p-1}(t) \pmod{p} \nonumber\\
 & \overset{\eqref{eq:diff_basic}}{\equiv} [F_{c_t ,(\frac{1}{d}, \frac{d-1}{d})}]_{p-1}(t) \pmod{p}. \nonumber
\end{align}
Next, we show the congruence relations at fixed $t=\mu$ where $\widetilde{E}_d(\mu)$ has CM. The differentials 
\[\omega:= \omega_\mu=\sum_{n\ge 0} a(n-1)\xi^n d\xi, \quad  \text{and} \quad \nu:= \nu_\mu=\sum_{n\ge 0} b(n-1)\xi^n d\xi\]
are non-parallel common eigenvectors for the action of $\End \,\, \widetilde{E}_d(\mu)$ on the first de Rham cohomology group $H_{\text{dR}}^1(\widetilde{E}_d(\mu)/A)$.  Here $A$ is the ring of Witt vectors $W(\mathbb{F}_p(\mu))$ as defined in \eqref{def:A}, which is isomorphic to the ring of integers $\Z_p[\mu]$ of $\Q_p(\mu)$. 

The first congruence relation then follows immediately from \eqref{a(p-1)}, and we obtain
\begin{equation}
\label{eq:specifica(p-1)}
  a(p-1) \equiv  [F_{0,(\frac{1}{d}, \frac{d-1}{d})}]_{p-1}(\mu) \pmod{pA},
\end{equation}
where $A=\Z_p[\mu]$ and we note that $A=\Z_p$ when $\mu=\l$. 

To show the second congruence relation, we examine the unique value $c_\mu  = \frac{1}{B(z_0)}$, where $\widetilde{E}_d(\mu)$ has CM by $z_0=\gamma\tau_0$ and $B(z_0)$ is as in \eqref{eq:givesB}. In order to write $c_\mu$ as a function of $\alpha_N$, note that since the value $\alpha_N$ depends on $Z(\tau)$ (see Corollary \ref{cor:WIN5}), and therefore on $\bf b$, then the expression of $c_\mu$ as a function of $\alpha_N$ will depend on whether $\mu=\lambda$ or $\mu=\frac{1 \pm \sqrt{1-\l}}{2}$.  When $\mu=\lambda$, the value $c_\mu=\frac{1}{B(z_0)}=2\alpha_N$ is given by Lemma~\ref{lem: eigenvector} and \eqref{eqn:constant c}. When $\mu=\frac{1 \pm \sqrt{1-\l}}{2}$, from \eqref{eq:givesB} we have that $c_\mu$ is the unique algebraic number such that 

\begin{align*}
   \frac{1}{\pi} & \quad \sim  \quad  F_{0,(\frac{1}{d}, \frac{d-1}{d})}(\mu) \cdot F_{c_\mu,(\frac{1}{d}, \frac{d-1}{d})}(\mu) \\
   & \overset{\text{Lem.} \ref{lem:4F3prod}}{=}  F_{c_\mu',(\frac{1}{2}, \frac{1}{d}, \frac{d-1}{d})}(\lambda),
\end{align*}
where $c_\mu'=\frac{c_\mu(1-2\mu)}{2(1-\mu)}$. From Corollary \ref{cor:WIN5}, we obtain 
\begin{equation} \label{eq:WIN2cond}
\frac{1}{\pi}\cdot \frac{\delta_N}{ F_{0,(\frac{1}{2}, \frac{1}{d}, \frac{d-1}{d})}(\lambda) } =  F_{2\alpha_N,(\frac{1}{2}, \frac{1}{d}, \frac{d-1}{d})}(\lambda).
\end{equation}
By Lemma \ref{lem:deltaquotioent}, $\frac{\delta_N}{F_{0,(\frac{1}{2}, \frac{1}{d}, \frac{d-1}{d})}(\lambda) }$ is algebraic, and the uniqueness of $c_\mu$ gives that $c_\mu'=2\alpha_N$, and thus $c_\mu=\frac{4\alpha_N(1-\mu)}{1-2\mu}$. Therefore, \ref{eq:specifica(p-1)} together with the specific expression of $c_\mu$  gives
$$[F_{0,(\frac{1}{d}, \frac{d-1}{d})}]_{p-1}(\mu)\cdot[F_{c_\mu,(\frac{1}{d}, \frac{d-1}{d})}]_{p-1}(\mu) \equiv  a(p-1)b(p-1)  \pmod{p A}.$$

In order to show the second congruence relation modulo $p^2$, we consider separately the cases when $\widetilde{E}_d(\mu)$ is supersingular or ordinary at $p$.  When $\widetilde{E}_d(\mu)$ is supersingular, the congruence follows as in \cite[Proof of Proposition~16]{WIN2} by using the action of the multiplication by $-p$  on $\widetilde{E}_d(\mu)$.  When $\widetilde{E}_d(\mu)$ is ordinary at $p$ we use the action of the multiplication by $u_p$ (the unit root given in Theorem \ref{thm:main}) on $\widetilde{E}_d(\mu)$ as in Remark \ref{rem: action}. Then the congruence follows from Case 1 of the proof of Theorem \ref{thm:WIN2b}  when  $\mu$ can be embedded in $\Z_p$, and from Case 2 of the proof of Theorem \ref{thm:WIN2b} when $\mu$ can not be embedded in $\Z_p$.  In each case, we obtain 

$$
  [F_{0,(\frac{1}{d}, \frac{d-1}{d})}]_{p-1}(\mu)\cdot [F_{c_\mu,(\frac{1}{d}, \frac{d-1}{d})}]_{p-1}(\mu)  \equiv  a(p-1)b(p-1) \equiv \sgn \cdot \varepsilon_\mu\, p \pmod{p^2A},
$$
where $\sgn= 1$ (or $-1$) if and only if  $\widetilde{E}_d(\mu)$ is ordinary (or supersingular) at $p$, and $\varepsilon_\mu=-1$ if $\mu$ can not be embedded in $\Z_p$ and $\varepsilon_\mu=1$ otherwise.

\end{proof}

\section{Proof of Theorem \ref{thm:main}}
\label{sec:proof}

Before we prove Theorem \ref{thm:main} we need a few results about $p$-divisibility of our hypergeometric coefficients and truncating hypergeometric products modulo $p^2$ and $p^3$.  We use the following notation.  For $d\in\{2,3,4,6\}$, and $p\geq 3$ prime, define positive integers $r_d$ and $s_d$ by
\begin{align}\label{def:rdsd}
r_d &:=
\begin{cases}
\frac{p-1}{d} &  \text{if } p\equiv 1 \pmod d \\
\frac{p-(d-1)}{d} &  \text{if } p\equiv -1 \pmod d
\end{cases} \\
s_d &:=
\begin{cases}
\frac{(d-1)p-(d-1)}{d} &  \text{if } p\equiv 1 \pmod d \\
\frac{(d-1)p-1}{d} &  \text{if } p\equiv -1 \pmod d.
\end{cases} \nonumber
\end{align}

We first note that for $p$ an odd prime and $k$ a nonnegative integer such that $\frac{p+1}{2}\leq k\leq p-1$,
\begin{equation} \label{eq:half}
\frac{\left( \frac{1}{2} \right)_k}{(1)_k} \equiv 0 \pmod{p}.
\end{equation}

Chisholm, et al. \cite[Lemma 17]{WIN2} further give that for $d \in \{2, 3, 4, 6\}$ and $p$ an odd prime coprime to $d$, 
\begin{equation} \label{lem:win2}
\frac{\left( \frac{1}{d} \right)_k \left( \frac{d-1}{d} \right)_k}{(1)_k^2} \equiv
\begin{cases}
0 \!\!\! \pmod{p}, \text{ but } \not\equiv 0 \!\!\! \pmod{p^2}  & \text{ if } r_d < k \leq s_d, \\
0 \!\!\! \pmod{p^2}  & \text{ if } s_d <k \leq p-1.
\end{cases}
\end{equation}
Considering $\sum_{k=0}^\infty c_kz^k = (\sum_{k=0}^\infty a_kz^k)(\sum_{k=0}^\infty b_kz^k)$, we observe that truncating at $p-1$ gives
\[
\sum_{k=0}^{p-1} c_kz^k = \sum_{k=0}^{p-1}\sum_{j=0}^k a_jb_{k-j}z^k,
\]
whereas truncating individually and taking the product gives
\begin{multline}\label{eq:truncprod}
\left(\sum_{k=0}^{p-1}  a_kz^k \right) \!\! \left(\sum_{k=0}^{p-1} b_kz^k\right) 
= \!\! \sum_{k=0}^{2(p-1)} \!\!\!\! \sum_{\substack{0\leq j \leq p-1 \\ 0\leq k-j \leq p-1}} \!\!\!\! a_jb_{k-j} z^k
= \sum_{k=0}^{p-1} \sum_{j=0}^k a_jb_{k-j} z^k + \!\! \sum_{k=p}^{2(p-1)} \sum_{j=k-(p-1)}^{p-1} \!\!\!\! a_jb_{k-j} z^k \\
= \sum_{k=0}^{p-1} c_kz^k + \sum_{k=1}^{p-1}  \sum_{j=k}^{{p-1}} a_jb_{(p-1)-(j-k)} z^{(p-1)+k} 
= \sum_{k=0}^{p-1} c_kz^k + \sum_{k=1}^{p-1}  \sum_{j=0}^{{p-1-k}} a_{k+j}b_{(p-1)-j} z^{(p-1)+k}.
\end{multline}
These observations allow us to prove the following lemma which is necessary for our proof of Theorem \ref{thm:main}.

\begin{lemma} \label{lem:errmodp^3}
Using the notation in Theorem \ref{thm:main}, we have that if $p\geq 5$ prime and $\alpha_N \in \overline{\Q}$ such that $\alpha_N$ can be embedded into $\Z_p$ (and fixing such an embedding),
\[
[F_{0,{\bf b}}\cdot F_{2\alpha_N, {\bf b}}]_{p-1}(z) \equiv [F_{0,{\bf b}}]_{p-1}(z) \cdot [F_{2\alpha_N, {\bf b}}]_{p-1}(z) \;  \begin{cases} \!\!\!\! \pmod{p^2} \text{ if } {\bf b} = (\frac{1}{d}, \frac{d-1}{d}), \\ \!\!\!\! \pmod{p^3} \text{ if } {\bf b} = (\frac12, \frac{1}{d}, \frac{d-1}{d}). \end{cases}
\]       
\end{lemma}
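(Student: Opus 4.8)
The plan is to bound the \emph{difference} between the two sides, which is already made explicit by the last line of \eqref{eq:truncprod}. Writing $F_{0,{\bf b}}(z)=\sum_{k\ge 0}a_kz^k$ and $F_{2\alpha_N,{\bf b}}(z)=\sum_{k\ge 0}b_kz^k$, that identity gives
\[
[F_{0,{\bf b}}]_{p-1}(z)\cdot[F_{2\alpha_N,{\bf b}}]_{p-1}(z)-[F_{0,{\bf b}}\cdot F_{2\alpha_N,{\bf b}}]_{p-1}(z)=\sum_{k=1}^{p-1}\sum_{j=0}^{p-1-k}a_{k+j}\,b_{(p-1)-j}\,z^{(p-1)+k},
\]
so it suffices to show that each coefficient $a_{m_1}b_{m_2}$ on the right is divisible by $p^2$ (for the two-parameter ${\bf b}$) or by $p^3$ (for the three-parameter ${\bf b}$). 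The decisive point is that every such term has $1\le m_1,m_2\le p-1$ and
\[
m_1+m_2=(k+j)+\big((p-1)-j\big)=(p-1)+k\ge p,
\]
since $k\ge 1$.

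First I would strip off the linear factor. Writing $a_m=\frac{(b_1)_m\cdots(b_n)_m}{(1)_m^n}$ and $b_m=(2\alpha_N m+1)a_m$, the scalar $2\alpha_N m+1$ lies in $\Z_p$ (as $\alpha_N$ embeds into $\Z_p$ and $m\in\Z$) and so does not lower any valuation; hence it is enough to bound $v_p(a_{m_1}a_{m_2})$ from below whenever $1\le m_1,m_2\le p-1$ with $m_1+m_2\ge p$. Note $p\ge 5$ already forces $p$ coprime to $d$ for each $d\in\{2,3,4,6\}$, so \eqref{eq:half} and \eqref{lem:win2} are available.

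For ${\bf b}=(\frac1d,\frac{d-1}d)$, set $v(m):=v_p\big(\frac{(1/d)_m((d-1)/d)_m}{(1)_m^2}\big)$, so that by \eqref{lem:win2} one has $v(m)=0$ for $m\le r_d$, $v(m)=1$ for $r_d<m\le s_d$, and $v(m)\ge 2$ for $s_d<m\le p-1$. The key arithmetic input is the complementarity identity $r_d+s_d=p-1$, which one checks by direct substitution into \eqref{def:rdsd} in each of the two classes of $p\bmod d$. If $v(m_1)+v(m_2)\le 1$, then one index is $\le r_d$ and the other is $\le s_d$, so $m_1+m_2\le r_d+s_d=p-1<p$, contradicting $m_1+m_2\ge p$; hence $v(m_1)+v(m_2)\ge 2$ and the $p^2$ bound follows.

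For ${\bf b}=(\frac12,\frac1d,\frac{d-1}d)$, I would factor $a_m=\frac{(1/2)_m}{(1)_m}\cdot\frac{(1/d)_m((d-1)/d)_m}{(1)_m^2}$ and write $V(m)=w(m)+v(m)$ with $w(m):=v_p\big(\frac{(1/2)_m}{(1)_m}\big)$. By \eqref{eq:half}, $w(m)\ge 1$ exactly for $m\ge\frac{p+1}2$, i.e.\ $m>\frac{p-1}2$; and since $m_1+m_2\ge p$ forces at least one of $m_1,m_2$ to exceed $\frac{p-1}2$ (otherwise the sum is $\le p-1$), we get $w(m_1)+w(m_2)\ge 1$. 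Combining this with $v(m_1)+v(m_2)\ge 2$ from the previous paragraph yields $V(m_1)+V(m_2)\ge 3$, giving the $p^3$ bound. The main obstacle is nothing deep but rather the valuation bookkeeping: everything hinges on the two complementarity facts $r_d+s_d=p-1$ and $\frac{p-1}2+\frac{p-1}2=p-1$, which are exactly what guarantee that the valuations of the two complementary-index factors add up to the required total.
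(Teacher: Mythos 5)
Your proposal is correct and follows essentially the same route as the paper: both start from the error term in \eqref{eq:truncprod}, strip off the $p$-adically integral factor $2\alpha_N m+1$, and then use \eqref{lem:win2} together with the complementarity $r_d+s_d=p-1$ (and \eqref{eq:half} with $\frac{p-1}{2}+\frac{p-1}{2}=p-1$ in the three-parameter case) to force the combined valuation of each product $a_{m_1}b_{m_2}$ with $m_1+m_2\ge p$ to be at least $2$, respectively $3$. The paper phrases this as an interval case analysis on $p-1-j$ rather than your contrapositive valuation-additivity argument, but the content is identical.
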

Note that letting $\mu=\frac{1 \pm \sqrt{1-\l}}{2}$, it follows from Lemmas \ref{lem:4F3prod}, \ref{lem:errmodp^3}, and Proposition \ref{lem:main} that
\[
[F_{2\alpha_N,(\frac12,\frac{1}{d}, \frac{d-1}{d})}]_{p-1}(\l) \equiv [F_{0,(\frac{1}{d}, \frac{d-1}{d})}]_{p-1}(\mu) \cdot [F_{c_\mu,(\frac{1}{d}, \frac{d-1}{d})}]_{p-1}(\mu)  \equiv  \sgn \cdot \varepsilon_\mu \, p \pmod{p^2 A}.
\]
Thus, since $[F_{2\alpha_N,(\frac12,\frac{1}{d}, \frac{d-1}{d})}]_{p-1}(\l)$ and $\sgn \cdot \varepsilon_\mu \, p$ are both in $\Z_p$, we have from Proposition \ref{lem:main} that 
\begin{align*}
[F_{2\alpha_N,(\frac12,\frac{1}{d}, \frac{d-1}{d})}]_{p-1}(\l) \equiv \sgn \cdot \varepsilon_\mu \, p \pmod{p^2}.
\end{align*}

\begin{proof}[Proof of Lemma \ref{lem:errmodp^3}]
Let $d\in \{2,3,4,6\}$ and set ${\bf b} = (\frac{1}{d}, \frac{d-1}{d})$, or ${\bf b} = (\frac12, \frac{1}{d}, \frac{d-1}{d})$. Write $F_{0,{\bf b}}(z) = \sum_{k\geq 0} a_kz^k$ and $F_{2\alpha_N,{\bf b}}(z) = \sum_{k\geq 0} b_kz^k$.  Then by \eqref{eq:truncprod}, we have
\[
[F_{0,{\bf b}}\cdot F_{2\alpha_N, {\bf b}}]_{p-1}(z) = [F_{0,{\bf b}}]_{p-1}(z)\cdot[F_{2\alpha_N, {\bf b}}]_{p-1}(z) - \sum_{k=1}^{p-1}  \sum_{j=0}^{{p-1-k}} a_{k+j}b_{(p-1)-j} z^{(p-1)+k}.
\]
It follows from  \eqref{def:F_a} that $b_k=(2\alpha_N k+1)a_k$. Since $\alpha_N$ can be embedded into $\Z_p$ it suffices to show for $1\leq k \leq p-1$ and $0\leq j\leq p-1-k$ that $a_{k+j}a_{(p-1)-j}$ is congruent to $0$ modulo our desired power of $p$.
 
Using the notation in \eqref{lem:win2}, we consider when $p-1-j$ is in the intervals $(0,r_d]$, $(r_d,s_d]$, and $(s_d,p-1]$ separately. 

First, we consider the case when ${\bf b} = (\frac{1}{d}, \frac{d-1}{d})$, so that $a_k=\frac{\left( \frac{1}{d}\right)_k\left( \frac{d-1}{d}\right)_k}{(1)_k^2}$.  When $p-1-j\in (s_d,p-1]$, \eqref{lem:win2} directly gives that $a_{p-1-j}\equiv 0 \pmod{p^2}$.  When $p-1-j\in (0,r_d]$, then $s_d=p-1-r_d \leq j < p-1$, which implies that $s_d<j+k \leq p-1$, so \eqref{lem:win2} gives that $a_{k+j}\equiv 0 \pmod{p^2}$. Now suppose $p-1-j\in (r_d,s_d]$.  Then, we have 
\[
r_d=p-1-s_d\leq j < p-1-r_d=s_d.
\]
Thus, we have that $p-1-j\in (r_d,s_d]$ and also that $j+k\in (r_d,p-1]$, so \eqref{lem:win2} gives that $a_{p-1-j}\equiv 0 \pmod{p}$ and $a_{k+j}\equiv 0 \pmod{p}$.

Next, we consider the case when ${\bf b} = (\frac12, \frac{1}{d}, \frac{d-1}{d})$, so that $a_k=\frac{\left( \frac{1}{2}\right)_k\left( \frac{1}{d}\right)_k\left( \frac{d-1}{d}\right)_k}{(1)_k^3}$.  From the previous case, we have seen that $\frac{\left( \frac{1}{d}\right)_k\left( \frac{d-1}{d}\right)_k}{(1)_k^2}\equiv 0\pmod{p^2}$ for $p-1-j\in (0,p-1]$.  Thus it suffices to show that when $p-1-j\in (0,p-1]$, either $\frac{\left( \frac{1}{2} \right)_{p-1-j}}{(1)_{p-1-j}} \equiv 0 \pmod{p}$ or $\frac{\left( \frac{1}{2} \right)_{k+j}}{(1)_{k+j}} \equiv 0 \pmod{p}$.  If $p-1-j\in (\frac{p-1}{2},p-1]$, then from \eqref{eq:half} we have $\frac{\left( \frac{1}{2} \right)_{p-1-j}}{(1)_{p-1-j}} \equiv 0 \pmod{p}$.  If $p-1-j\in (0,\frac{p-1}{2}]$, then $\frac{p-1}{2} \leq j <p-1$, so we have that $\frac{p-1}{2} < k+j \leq p-1$, and so from \eqref{eq:half} we have $\frac{\left( \frac{1}{2} \right)_{k+j}}{(1)_{k+j}} \equiv 0 \pmod{p}$.
\end{proof}

We now prove our main theorem.

\begin{proof}[Proof of Theorem \ref{thm:main}]

Let $d\in \{2,3,4,6\}$ and set ${\bf b} = (\frac{1}{d}, \frac{d-1}{d})$, or ${\bf b} = (\frac12, \frac{1}{d}, \frac{d-1}{d})$. Suppose $Z(\tau)= F_{0,{\bf b}}(X(\tau))^2$ satisfies all of the conditions in Corollary \ref{cor:WIN5}, and that if $\l = X(\gamma\tau_0)$, then $F_{0,{\bf b}}(\l) = F_{0,(\frac{1}{d}, \frac{d-1}{d})}(\mu)^r$ for some $r$, where $\mu\in \{\l, \frac{1 \pm \sqrt{1-\l}}{2}\}$.

Assume that $\Q(\mu) \subseteq \Q(\sqrt{1-\l})$ is a totally real field, $|\l|, |\mu|<1$,  and that $\widetilde E_d(\mu)$ has CM.  Let $p$ be an odd prime such that $X(\tau_0), \lambda, \mu$ can be embedded into $\Z_p$, and $p$ is unramified in $\Q(\sqrt{1-\l})$.  Suppose that $\widetilde E_d(\mu)$ has good reduction modulo $p$. 

First, we consider the case when ${\bf b} = (\frac{1}{d}, \frac{d-1}{d})$ and $\mu=\l=X(\gamma\tau_0)$, so that $F_{0,{\bf b}}(X(\gamma\tau_0)) = F_{0,(\frac{1}{d}, \frac{d-1}{d})}(\mu)$.  Then by Lemma \ref{lem:errmodp^3} and Proposition \ref{lem:main},

\begin{align*}
[F_{0,{\bf b}}\cdot F_{2\alpha_N,{\bf b}}]_{p-1}(X(\gamma\tau_0)) & = [F_{0,(\frac{1}{d}, \frac{d-1}{d})}\cdot F_{2\alpha_N,(\frac{1}{d}, \frac{d-1}{d})}]_{p-1}(\mu) \\
& \equiv [F_{0,(\frac{1}{d}, \frac{d-1}{d})}]_{p-1}(\mu) \cdot [F_{2\alpha_N,(\frac{1}{d}, \frac{d-1}{d})}]_{p-1}(\mu) \pmod{p^2} \\ 
& \equiv a(p-1)b(p-1)\pmod{p^2}.
\end{align*}

Applying Lemma \ref{lem:WIN2Prop16}, we thus obtain
\[
[F_{0,{\bf b}}\cdot F_{2\alpha_N,{\bf b}}]_{p-1}(X(\gamma\tau_0)) \equiv \begin{cases} p &\text{ if } \widetilde{E}_d(\lambda) \text{ is ordinary at }p \\ -p &\text{ if } \widetilde{E}_d(\lambda) \text{ is supersingular at }p \end{cases} \pmod{p^2}. 
\]

Applying a theorem of Deuring \cite[Ch. 13 Thm. 12]{Lang_EF}, $\widetilde E_d(\l)$ is supersingular at $p$ if and only if $p$ either ramifies or is inert in $k=\Q(\tau_0)=\Q(c\sqrt{-N})$, i.e., $E$ is ordinary modulo $p$ if and only if $p$ splits in $k$.  Thus by the splitting of primes in imaginary quadratic fields (see \cite[Thm. 25]{Marcus}), we obtain that $\widetilde E_d(\l)$ is ordinary if and only if $\left( \frac{-c^2N}{p} \right ) =1$, as desired. 

Next, we consider the case when ${\bf b} = (\frac12, \frac{1}{d}, \frac{d-1}{d})$ and $\mu = \frac{1 \pm \sqrt{1-\l}}{2}=\frac{1 \pm \sqrt{1-X(\gamma\tau_0)}}{2}$, so that $F_{0,{\bf b}}(X(\gamma\tau_0)) = F_{0,(\frac{1}{d}, \frac{d-1}{d})}(\frac{1 \pm \sqrt{1-X(\gamma\tau_0)}}{2})^2$.  Then by Lemma \ref{lem:errmodp^3},

\begin{align*}
[F_{0,{\bf b}}\cdot F_{2\alpha_N,{\bf b}}]_{p-1}(X(\gamma\tau_0)) & = [F_{0,(\frac12, \frac{1}{d}, \frac{d-1}{d})}\cdot F_{2\alpha_N,(\frac12,\frac{1}{d}, \frac{d-1}{d})}]_{p-1}(\l) \\
& \equiv [F_{0,(\frac12,\frac{1}{d}, \frac{d-1}{d})}]_{p-1}(\l)\cdot [F_{2\alpha_N,(\frac12,\frac{1}{d}, \frac{d-1}{d})}]_{p-1}(\l) \pmod{p^3}.
\end{align*}

From \eqref{eq:WIN2cond} we know that $\alpha, \lambda$ admit a series of the form \eqref{eq:ram} which satisfies the conditions of Theorems \ref{thm:WIN2a} and \ref{thm:WIN2b}.  Thus we have that
\[
[F_{2\alpha_N,(\frac12,\frac{1}{d}, \frac{d-1}{d})}]_{p-1}(\l) \equiv 
\begin{cases}
\left( \frac{1-\l}p \right) \cdot p  & \text{ if } \widetilde{E}_d(\mu) \text{ is ordinary at }p \\
- \left( \frac{1-\l}p \right) \cdot p  & \text{ if } \widetilde{E}_d(\mu) \text{ is supersingular at }p
\end{cases}
\pmod {p^2},
\]
and that
\[
[F_{0,(\frac12,\frac{1}{d}, \frac{d-1}{d})}]_{p-1}(\l) \equiv  
\begin{cases}
u_p^2  & \text{if } \widetilde{E}_d(\frac{1-\sqrt{1-\lambda}}{2}) \text{ is ordinary at } p \text{ and }\left(\frac{1-\l}p \right)=1 \\
\left( \frac{k_d}{p} \right)u_p^2  & \text{if } \widetilde{E}_d(\frac{1-\sqrt{1-\lambda}}{2}) \text{ is ordinary at } p \text{ and }\left(\frac{1-\l}p \right)=-1 \\
0 & \text{if } \widetilde{E}_d(\mu) \text{ is supersingular at } p
\end{cases}
\pmod {p^2}.
\]
Putting these together and again applying \cite[Ch. 13 Thm. 12]{Lang_EF} gives the desired result.
\end{proof}

\section{Examples}
\label{sec:examples}
\renewcommand{\theequation}{\Alph{equation}}
Below we give 11 examples of Theorem \ref{thm:main} and 4 examples of Theorem \ref{thm:WIN2a}, each of which stems from series evaluations in \cite[Thm. 2]{WIN5}. While each example is unique, we note that those in \S \ref{sec:win2examples} arise from the same series as some of those in \S \ref{examples4.2-5}.

As each of the following examples stem from \cite[Thm. 2]{WIN5}, it is clear that Corollary \ref{cor:WIN5} will hold.  However, in an effort to keep this paper self-contained we will provide the data required to demonstrate that the conditions of Corollary \ref{cor:WIN5} are satisfied for each example, giving that 
\[ \frac{\delta}{\pi} = F_{0,{\bf b}} (\lambda) \cdot F_{2\alpha,{\bf b}} (\lambda),\]
where $\delta, \alpha, \lambda$ are given in Table~\ref{tab:RSData}. This table also lists the values of $N, \lambda, \mu, d$ necessary for Theorem \ref{thm:main} from which we can see that for each example $\Q (\lambda)$ is totally real and $|\lambda|,|\mu| <1$. 
We then determine primes $p$ for which Theorem \ref{thm:main} holds. 
In particular, we ensure that $X(\tau_0)$ and $\lambda$ can be embedded into $\mathbb{Z}_p$, that $p$ is unramified in $\Q(\mu)$ and determine the primes of good reduction for the corresponding elliptic curve $\widetilde{E}_d(\mu)$.  Data for the elliptic curves $\widetilde{E}_d(\mu)$ is obtained using \texttt{Sagemath} \cite{sagemath}. We note that each elliptic curve $\widetilde{E}_d(\mu)$ has CM. 

The Hauptmoduln are written as in \cite{WIN5} in terms of the $j$-function and Dedekind's $\eta$-function, 
\begin{align*}
    t_2(\tau) &= -64\eta(2\tau)^{24}/\eta(\tau)^{24}\\
    t_3(\tau) &= -27\eta(3\tau)^{12}/\eta(\tau)^{12}\\
    t_\infty(\tau) &= 16\eta(\tau)^8\eta(4\tau)^{16}/\eta(2\tau)^{24}\\
    t_{2,3}(\tau) &=1728/j(\tau)\\
    t_{2,4}(\tau) &=256\eta(\tau)^{24}\eta(2\tau)^{24}/(\eta(\tau)^{24}+64\eta(2\tau)^{24})^2\\
    t_{2,6}(\tau) &= 108\eta(\tau)^{12}\eta(3\tau)^{12}/(\eta(\tau)^{12}+27\eta(3\tau)^{12})^2.
\end{align*}
For a discussion regarding our choice of Hauptmoduln, see \cite{WIN5}.

\subsection{Examples from \texorpdfstring{\cite[\S 4]{WIN5}}{}} \label{examples4.2-5} 

In \eqref{eq:example4.2a}-\eqref{eq:example4.7}, we list examples of Theorem \ref{thm:main} stemming from from \cite[\S4]{WIN5}. For each, we have that $Z(\tau) = F_{0,(\frac{1}{d},\frac{d-1}{d})}(X(\tau))^2$ where $d$ is listed in Table \ref{tab:RSData} and $X$ is given in Table \ref{tab:Sec4ExampleData}, so we have that $\mu = \lambda$. 

In Table \ref{tab:Sec4ExampleData}, we list the  label from \cite{WIN5} associated to each example, the data required to construct the Ramanujan-Sato series in Corollary \ref{cor:WIN5}, and $d$ such that $\mathbf{b} = \left(\frac 1d, \frac{d-1}{d}\right)$. Note that for each of these examples, $U(\tau) = 1-X(\tau)$, the accompanying $N$ can be found in Table \ref{tab:RSData}, and one can determine the value of $\tau_0$ using Corollary \ref{cor:WIN5}.

\begingroup
\renewcommand{\arraystretch}{1.6}
\begin{table}[h] \label{tab:Sec4ExampleData}
    \centering
    \begin{tabular}{|c|c|c|c|c|c|}
    \hline
        Example & Reference from \cite{WIN5} &  $\gamma$ & $\Gamma$&   $X(\tau)$ & $\varepsilon$\\\hline
        
       \eqref{eq:example4.2a} & 4.2  & \multirow{2}{*}{$ \left( \begin{smallmatrix}  0 & -\frac{1}{\sqrt{2}} \\ \sqrt{2} & 0\end{smallmatrix}\right)$} &\multirow{4}{*}{$\Gamma_0(2)$}  &\multirow{4}{*}{$\frac{t_2(\tau)}{t_2(\tau)-1}$} & \multirow{2}{*}{$-1$} \\\cline{0-1}
       
        \eqref{eq:example4.3a}&4.3&  & &  & \\\cline{0-2}\cline{6-6}
        
        \eqref{eq:example4.4a}&4.4&  \multirow{2}{*}{$\left( \begin{smallmatrix} 1 & -1 \\ 2 & -1 \end{smallmatrix}\right)$} & & &   \multirow{2}{*}{$1$}\\\cline{0-1}
        
        \eqref{eq:example4.5a} &4.5 &  & &  & \\\cline{0-5}
        
        \eqref{eq:example4.6} &4.6 &  $\left( \begin{smallmatrix}
            0 & -\frac{1}{\sqrt{3}}\\ \sqrt{3} & 0 
        \end{smallmatrix}\right)$ & $\Gamma_0(3)$ & $\frac{t_3(\tau)}{t_3(\tau)-1}$ &  $1$\\\cline{0-5}
        
        \eqref{eq:example4.7} &4.7 & $\left( \begin{smallmatrix}
            0 & -\frac{1}{2}\\ 2 & 0 
        \end{smallmatrix}\right)$& $\Gamma_0(4)$ & $t_\infty(\tau)$ & $-1$\\\cline{0-5}
    \end{tabular}    
    \medskip
    \caption{Data required for examples \eqref{eq:example4.2a}-\eqref{eq:example4.7}}
\end{table}
\endgroup
Note that for each of these examples, we have that either $\Q(\lambda) = \Q$ or $\Q(\lambda) = \Q(\sqrt{D})$ for some non-square $D\in \Z$. In the case where $\Q(\lambda)$ is quadratic, requiring that $\lambda$ can be embedded into $\Z_p$ gives that $\left(\frac{D}{p}\right) = 1$, which in turn implies that $p$ splits in $\Q(\lambda)$. Thus, requiring that $\lambda$ can be embedded into $\Z_p$ is a stronger condition than requiring that $p$ is unramified in $\Q(\lambda)$. 
\noindent We obtain the following examples.  For $p\equiv \pm 1 \pmod 8$,
\begin{equation}\label{eq:example4.2a}
\left[F_{0,(\frac{1}{4}, \frac{3}{4})}\cdot F_{6\sqrt{2}+8,(\frac{1}{4}, \frac{3}{4})}\right]_{p-1}\left(\frac{3-2\sqrt{2}}{6}\right) \equiv 
\begin{cases}
p & \text{ if } \left ( \frac{-6}{p} \right ) =1 \\
-p & \text{ otherwise }
\end{cases}
\pmod{p^2}.
\end{equation}
For $p\equiv \pm 1 \pmod 5$, 
\begin{equation}\label{eq:example4.3a}
    \left[F_{0,(\frac{1}{4}, \frac{3}{4})}\cdot F_{\frac{20+9\sqrt{5}}{2},(\frac{1}{4}, \frac{3}{4})}\right]_{p-1}\left(\frac{9-4\sqrt{5}}{18}\right) \equiv 
\begin{cases}
p & \text{ if } \left ( \frac{-10}{p} \right ) =1 \\
-p & \text{ otherwise }
\end{cases}
\pmod{p^2}.
\end{equation}
For $p\geq 5,$
\begin{equation}
    \label{eq:example4.4a}
    \left[F_{0,(\frac{1}{4}, \frac{3}{4})}\cdot F_{8,(\frac{1}{4}, \frac{3}{4})}\right]_{p-1}\left(\frac{-1}{3}\right) \equiv 
\begin{cases}
p & \text{ if } \left ( \frac{-12}{p} \right ) =1 \\
-p & \text{ otherwise }
\end{cases}
\pmod{p^2}.
\end{equation}
For $p\equiv \pm 1 \pmod 5$,
\begin{equation}
    \label{eq:example4.5a}
    \left[F_{0,(\frac{1}{4}, \frac{3}{4})}\cdot F_{\frac{2}{3}(10+4\sqrt{5}),(\frac{1}{4}, \frac{3}{4})}\right]_{p-1}\left(\frac{2-\sqrt{5}}{4}\right) \equiv 
\begin{cases}
p & \text{ if } \left ( \frac{-20}{p} \right ) =1 \\
-p & \text{ otherwise }
\end{cases}
\pmod{p^2}.
\end{equation}
For $p\equiv \pm 1 \pmod 8$,
\begin{equation}
    \label{eq:example4.6}
    \left[F_{0,(\frac{1}{3}, \frac{2}{3})}\cdot F_{6(\sqrt{2}+1),(\frac{1}{3}, \frac{2}{3})}\right]_{p-1}\left(\frac{2-\sqrt{2}}{4}\right) \equiv 
\begin{cases}
p & \text{ if } \left ( \frac{-6}{p} \right ) =1 \\
-p & \text{ otherwise }
\end{cases}
\pmod{p^2}.
\end{equation}
For $p\equiv \pm 1 \pmod8$,
\begin{equation}
    \label{eq:example4.7}
    \left[F_{0,(\frac{1}{2}, \frac{1}{2})}\cdot F_{2(4+2\sqrt{2}),(\frac{1}{2}, \frac{1}{2})}\right]_{p-1}\left(3-2\sqrt{2}\right) \equiv 
\begin{cases}
p & \text{ if } \left ( \frac{-8}{p} \right ) =1 \\
-p & \text{ otherwise }
\end{cases}
\pmod{p^2}.
\end{equation}

\begin{remark}\label{rmk:win2examples}
Examples \eqref{eq:example4.2a}-\eqref{eq:example4.5a} arise from series that are also of the form $\frac{\delta}{\pi} = F_{\alpha,\bf{b}}(\lambda)$ and thus one could also obtain a supercongruence by applying Theorem \ref{thm:WIN2a} directly.  We do this in \S \ref{sec:win2examples} which yields distinct examples. 
\end{remark}

\subsection{Examples from \texorpdfstring{\cite[\S 5]{WIN5}}{} }\label{examples5}
\color{black}
In \eqref{eq:example5.2}-\eqref{eq:example5.6} we list examples of Theorem \ref{thm:main} stemming from \cite[Ex. 5.2-5.6]{WIN5}. 
For each example, we have  $Z(\tau) = F_{0,\left(\frac 12,\frac 12-\frac 1m, \frac 12+\frac 1m  \right)}(X(\tau))^2$ where $X(\tau) = t_{2,m}$ for $m\in \{3,4,6\}$.
Therefore we have ${\bf b} = (\frac{1}{2}, \frac{1}{d},\frac{d-1}{d})$ for $d=\frac{2m}{m-2} \in \{3,4,6\}$, and so $\mu = \frac12(1- \sqrt{1-\lambda})$ as in Table~\ref{tab:RSData}. Additionally, we have $\lambda = X(\gamma\tau_0) = X(\tau_0)$, which is given in Table \ref{tab:RSData}, as is the value for $N$. The value for each $\tau_0$ can be determined from Corollary \ref{cor:WIN5}.
In Table \ref{tab:Sec5ExampleData} we provide the label for the associated example from \cite{WIN5} as well as the remaining data required to construct the Ramanujan-Sato series in Corollary \ref{cor:WIN5}.     Further, we have that $\frac{1}{2\pi i} \frac{dX}{d\tau} = Z(\tau)X(\tau)U(\tau)$ where $U(\tau)$ is given in \cite[\S 5, (42)-(44)]{WIN5} and we consider  $\gamma=\frac{1}{\sqrt{s}}\left(\begin{smallmatrix}
0 & -1\\ s & 0 \\
\end{smallmatrix}\right)$. In each case, $(Z|_{\gamma})(\tau)=Z(\tau)$. We note that by applying \cite[(34)]{WIN2}) and \cite[(31)]{WIN2},
\[
Z(\tau) = F_{0,\left(\frac 12,\frac 12-\frac 1m, \frac 12+\frac 1m  \right)}(X(\tau))^2 =F_{0,\left(\frac 12-\frac 1m, \frac 12+\frac 1m  \right)}\left(\dfrac{1\pm \sqrt{1-X(\tau)}}{2}\right)^4 .
\]

For each example, we ensure that $X(\tau_0)$ (and $\lambda$) can be embedded into $\Z_p$, that $p$ is unramified in $\Q(\mu)$, and that $\widetilde{E}_d(\mu)$ has good reduction at $p$. Below $u_p$ is the unit root of the geometric Frobenius at $p$ acting on the first cohomology of the elliptic curve $\widetilde E_d(\mu)$ as defined in Theorem~\ref{thm:main}.
We note that in \eqref{eq:example5.5} and  \eqref{eq:example5.6} we were able to collapse the conditions on the congruence by using properties of the Legendre relation. 

\begingroup
\renewcommand{\arraystretch}{1.6}
\begin{table}[h]
    \centering
    \begin{tabular}{|c|c|c|c|c|}
    \hline
       Example  & Reference from \cite{WIN5} & $\Gamma$ & $m$ & $s$ \\\hline
       \eqref{eq:example5.2} & 5.2 & \multirow{2}{*}{${\rm PSL}_2(\Z)$} & \multirow{2}{*}{$3$} &  \multirow{2}{*}{$1$} \\\cline{0-1}
        \eqref{eq:example5.3} & 5.3 & & & \\\cline{0-4}
        \eqref{eq:example5.4} & 5.4 & $\Gamma_0^+(2)$ & $4$ & $2$\\\cline{0-4}
        \eqref{eq:example5.5} & 5.5 & \multirow{2}{*}{$\Gamma_0^+(3)$} & \multirow{2}{*}{$6$} & \multirow{2}{*}{$3$}\\\cline{0-1}
        \eqref{eq:example5.6} & 5.6 & & & \\\cline{0-4}
    \end{tabular}
    \medskip
    \caption{Data required for examples \eqref{eq:example5.2}-\eqref{eq:example5.6}}
    \label{tab:Sec5ExampleData}
\end{table}
\endgroup

\noindent We obtain the following examples. For $p\neq 2,5,$
\begin{multline}
    \label{eq:example5.2}
    \left[F_{0,(\frac12, \frac{1}{6}, \frac{5}{6})}\cdot F_{\frac{28}{3},(\frac12, \frac{1}{6}, \frac{5}{6})}\right]_{p-1}\!\left(\frac{27}{125}\right) \\ \equiv 
\begin{dcases}
pu_p^2 & \text{ if } \left( \frac{-2}{p} \right) =1,  \left( \frac{1-\lambda}{p} \right) =1\\
-\left(\frac{-1}{p}\right)pu_p^2 & \text{ if } \left( \frac{-2}{p} \right) =1, \left( \frac{1-\lambda}{p} \right) =-1\\
0 & \text{ otherwise }
\end{dcases}
\pmod{p^2}.
\end{multline}
For $p\geq 7$, 
\begin{equation}
    \label{eq:example5.3}
    \left[F_{0,(\frac12, \frac{1}{6}, \frac{5}{6})}\cdot F_{11,(\frac12, \frac{1}{6}, \frac{5}{6})}\right]_{p-1}\left(\frac{4}{125}\right) \equiv 
\begin{dcases}
pu_p^2 & \text{ if } \left( \frac{-3}{p} \right) =1, \left( \frac{5}{p} \right)=1\\
-\left(\frac{-1}{p}\right)pu_p^2 & \text{ if } \left( \frac{-3}{p} \right) =1, \left( \frac{5}{p} \right)=-1\\
0 & \text{ otherwise }
\end{dcases}
\pmod{p^2}.
\end{equation}
For $p\geq 5$, 
\begin{equation}
    \label{eq:example5.4}
    \left[F_{0,(\frac12, \frac{1}{4}, \frac{3}{4})}\cdot F_{8,(\frac12, \frac{1}{4}, \frac{3}{4})}\right]_{p-1}\left(\frac{1}{9}\right) \equiv 
\begin{dcases}
pu_p^2 & \text{ if } \left ( \frac{-6}{p} \right ) =1, \left(\frac{2}{p}\right) = 1\\
-\left(\frac{-2}{p}\right)pu_p^2 & \text{ if } \left ( \frac{-6}{p} \right ) =1, \left(\frac{2}{p}\right) = -1\\
0 & \text{ otherwise }
\end{dcases}
\pmod{p^2}.
\end{equation}
For $p\geq 5$, 
\begin{equation}
    \label{eq:example5.5}
    \left[F_{0,(\frac12, \frac{1}{3}, \frac{2}{3})}\cdot F_{6,(\frac12, \frac{1}{3}, \frac{2}{3})}\right]_{p-1}\left(\frac{1}{2}\right) \equiv 
\begin{dcases}
pu_p^2 & \text{ if } \left ( \frac{-6}{p} \right ) =1 \\
0 & \text{ otherwise }
\end{dcases}
\pmod{p^2}.
\end{equation}
For $p\neq 3,5$, 
\begin{equation}
    \label{eq:example5.6}
     \left[F_{0,(\frac12, \frac{1}{3}, \frac{2}{3})}\cdot F_{\frac{33}{4},(\frac12, \frac{1}{3}, \frac{2}{3})}\right]_{p-1}\left(\frac{4}{125}\right) \equiv 
\begin{dcases}
pu_p^2 & \text{ if } \left ( \frac{-15}{p} \right )=1\\
0 & \text{ otherwise }
\end{dcases}
\pmod{p^2}.
\end{equation}
Remark \ref{rem: action} notes that certain supercongruences arising from Theorem \ref{thm:main} hold modulo $p^3$. Examples \ref{eq:example5.2} and \ref{eq:example5.3} are of this type. See Section \ref{sec: modp3} for details and discussion.

\subsection{Examples of Theorem \ref{thm:WIN2a}}\label{sec:win2examples}
As described in Remark \ref{rmk: forms}, the examples in \cite[Ex. 4.2-4.5]{WIN5}, which are obtained from applying \cite[Cor. 4.1]{WIN5}, differ in form from their counterparts in Table \ref{tab:RSData}, which are obtained by applying Corollary \ref{cor:WIN5}, although only by an algebraic constant scaling factor.  However, as mentioned in Remark \ref{rmk:win2examples} we can directly apply Theorem \ref{thm:WIN2a} to the series in \cite[Ex. 4.2-4.5]{WIN5} to obtain supercongruences that are genuinely different from those obtained from Theorem \ref{thm:main}.  We note that this produces (finitely) more restrictions on the allowable primes, however the result is also expected to be stronger as Theorem \ref{thm:WIN2a} is conjectured to hold modulo $p^3$.

To explain this difference in outcomes, when applying Corollary \ref{cor:WIN5} we obtain a series of the form $F_{c,\hat{{\bf b}}}(\hat{\lambda})$ that is also a product $F_{0,{\bf b}}(\lambda) \cdot F_{a,{\bf  b}}(\lambda)$ with a quadratic relationship between $\hat{\lambda}$ and $\lambda$ (obtained through quadratic hypergeometric formulas) as in Lemma \ref{lem:4F3prod}. 
There is an algebraic number $\mathfrak{c}$ such that 
\[F_{c,\hat{{\bf b}}}(\hat{\lambda})= \mathfrak{c}F_{0,{\bf b}}\cdot F_{a,{\bf b}}(\lambda).\] 
However, as the series have different coefficients the evaluations of their truncations are not equal,
\[
[F_{c,\hat{{\bf b}}}]_{p-1}(\hat{\lambda})\neq [F_{0,{\bf b}}\cdot F_{a,{\bf b}}]_{p-1}(\lambda),
\]
and due to the quadratic relationship between $\lambda$ and $\hat{\lambda}$ these evaluations are not always congruent modulo $p^2$, as the truncated series $[F_{c,{\bf b}}]_{p-1}(\hat{\lambda})$ yields a higher exponent of $\lambda$ for the $n^\text{th}$ term.  Moreover, the truncations $[F_{c,{\bf b}}]_{p-1}(\hat{\lambda})$ and $[F_{0,{\bf b}}\cdot F_{a,{\bf b}}]_{p-1}(\lambda)$ are related to periods of isogenous, yet distinct, elliptic curves.

In the following examples, recall that $\sgn=\pm 1$, equaling $1$ (or $-1$) if and only if  $E_d(\lambda) = \widetilde{E}_d(\frac{1\pm\sqrt{1-\lambda}}{2})$ is ordinary (or supersingular) at $p$. Since $d = 2$ for each case, we write $\mathbf{b} = (\frac 12, \frac 12, \frac 12)$. For each example one can verify that the conditions of Theorem \ref{thm:WIN2a} hold. Namely, the values of $a$ and $\lambda$ are obtained directly from the series, and we ensure that $\Q(\lambda)$ is totally real, $|\lambda|<1$, and $E_2(\lambda)$ has CM. Further, we restrict to primes $p$ that are coprime to the discriminant of $\widetilde E_d(\lambda)$, unramified in $\Q(\sqrt{1-\lambda})$, and such that $a, \lambda$ can be embedded in $\Z_p^\times$.  As in \S \ref{examples4.2-5} and \S \ref{examples5}, data for each elliptic curve $\widetilde E_2(\lambda)$ is found using \texttt{Sagemath} \cite{sagemath}.

The series in \cite[Exs. 4.2-4.5]{WIN5} can be written as
\begin{align*}
\frac{2}{\pi(4\sqrt{2}-5)} & = F_{\frac{12}{3-2\sqrt{2}},\mathbf{b}}(12\sqrt{2}-17), \\
\frac{2}{\pi(23-10\sqrt{5})} & =F_{\frac{12(15+4\sqrt{5})}{29},{ \bf b}}(72\sqrt{5}-161), \\
\frac{4}{\pi} & = F_{6,{\bf b}}\left(\frac 14\right), \\
\frac{10(\sqrt{5}+1)}{\pi\sqrt{\sqrt{5}-2}} & = F_{5+\sqrt{5},{\bf b}}(9-4\sqrt{5}).
\end{align*}
Applying Theorem \ref{thm:WIN2a} to these gives the following.  For $p \geq 11$ with $\left(\frac2p\right) = 1$, 
\begin{equation}
\label{eq:example4.2b}\left[F_{\frac{12}{3-\sqrt{2}}, \bf{b}}\right]_{p-1}\!(12\sqrt{2}-17) \equiv \sgn\cdot \left(\frac{18-12\sqrt{2}}{p}\right)\cdot p \pmod{p^2}.
\end{equation}
For $p\geq 5$, 
\begin{equation}
    \label{eq:example4.3b}
    \left[F_{\frac{12(15+4\sqrt{5})}{29},{ \bf b}}\right]_{p-1}\!\!(72\sqrt{5}-161) \equiv \sgn \cdot\left(\frac{-10(162-72\sqrt{5})}{p}\right)\cdot p \pmod{p^2}.  
\end{equation}
For $p\neq 29$ with $p\equiv \pm 1 \pmod 5$, 
\begin{equation}
    \label{eq:example4.4b}
\left[F_{6,{\bf b}}\right]_{p-1}\left(\frac{1}{4}\right) \equiv \sgn \cdot \left(\frac{3/4}{p}\right)\cdot p \pmod{p^2}.  
\end{equation}
For $p\geq 11$ with $\left(\frac{5}{p}\right)=1$, 
\begin{equation}
    \label{eq:example4.5b}
        \left[ F_{5+\sqrt{5},{\bf b}}\right]_{p-1} (9-4\sqrt{5}) \equiv \sgn \cdot \left(\frac{4\sqrt{5}-8}{p}\right)\cdot p \pmod{p^2}.
\end{equation}

\section{Connections with modular forms}\label{sec: modular}

\renewcommand\theequation{\arabic{section}.\arabic{equation}}

By work of Freitas, Hung, and Siksek \cite{FLS}, elliptic curves over real quadratic fields are modular.  The symmetric square $\rho_{\l, \ell} := \Sym^2(\rho_{E, \ell})$ of the compatible family of Galois representations $\rho_{E, \ell}$ attached to $E=E_{-}:=\widetilde{E}_d\left(\frac{1-\sqrt{1-\lambda}}{2}\right)$ is also modular. 
When $E$ has CM, $\rho_{\l, \ell}$ can be described in terms of a Grossencharacter attached to $E$ and corresponds to a weight $3$ CM cusp form by Weil's Converse Theorem \cite{Weil, Ogg-book}.  
The following is a corollary to Theorem \ref{thm:WIN2b} using the theory of CM elliptic curves.

\begin{corollary}\label{cor: modularity}
\label{weight3}
Assume the notation and assumptions in Theorem  \ref{thm:WIN2b} with $\l \in \Q$. Then there exists a weight $3$ Hecke eigenform $f$ such that for primes $p$ where $p\nmid 2d$ and $\l$ and $1-\l$ are $p$-adic units,
\[
[F_{0,(\frac12, \frac{1}{d}, \frac{d-1}{d})}]_{p-1}(\l) \equiv a_p(f) \pmod {p^2},
\] 
where $a_p(f)$ is the $p$th Fourier coefficient of $f$. 
\end{corollary}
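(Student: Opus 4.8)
The plan is to feed Theorem~\ref{thm:WIN2b} into the theory of CM modular forms, so that the quantity $L$ computed there becomes visibly the $p$-th Fourier coefficient of an explicit weight~$3$ eigenform. Write $E=\widetilde E_d(\tfrac{1-\sqrt{1-\l}}{2})$, which has CM by an order in an imaginary quadratic field $K$; after restriction to $K$ the compatible family $\rho_{E,\ell}$ decomposes as $\psi_0\oplus\psi_0^{\sigma}$ for a Grossencharacter $\psi_0$ of infinity type $1$ and its conjugate $\psi_0^\sigma$. First I would record the decomposition of $\rho_{\l,\ell}=\Sym^2\rho_{E,\ell}$: it contains a rank-$2$ constituent with Frobenius eigenvalues $\psi_0(\mathfrak p)^2$ and $\psi_0(\bar{\mathfrak p})^2$, the remaining line being of Tate type and contributing the eigenvalue $p$. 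By the theory of theta series of Hecke characters (equivalently, Weil's converse theorem) this rank-$2$ constituent, suitably descended to $\Q$, is the Galois representation of a weight~$3$ CM Hecke eigenform $f=\theta_\psi$ for a Grossencharacter $\psi$ of infinity type $2$ built from $\psi_0$; this is the form whose existence the corollary asserts.

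Next I would match $a_p(f)$ with the three cases of $L$. When $E$ is supersingular at $p$, the prime $p$ is inert or ramified in $K$, so $a_p(f)=0=L$. When $E$ is ordinary at $p$, the prime splits as $p=\mathfrak p\bar{\mathfrak p}$ and $a_p(f)=\psi(\mathfrak p)+\psi(\bar{\mathfrak p})$. The crucial observation is that exactly one of $\psi_0(\mathfrak p),\psi_0(\bar{\mathfrak p})$ is the fixed $p$-adic unit root $u_p$ while the other has $p$-adic valuation $1$; since $\psi(\bar{\mathfrak p})=p^2/\psi(\mathfrak p)\equiv 0\pmod{p^2}$, one gets $a_p(f)\equiv \psi(\mathfrak p)\pmod{p^2}$, and when the twist below is trivial (so $\psi=\psi_0^2$) this equals $u_p^2$, matching the case $\left(\frac{1-\l}{p}\right)=1$ of Theorem~\ref{thm:WIN2b}.

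The main obstacle is the remaining ordinary subcase $\left(\frac{1-\l}{p}\right)=-1$, where $L=\left(\frac{k_d}{p}\right)u_p^2$ and a genuine quadratic twist appears. Its source is that $E$ is defined over the real quadratic field $F=\Q(\sqrt{1-\l})$ rather than over $\Q$, so passing from the $G_F$-representation $\Sym^2\rho_{E,\ell}$ to the $G_\Q$-representation attached to $f$ introduces a finite-order character detecting the splitting of $p$ in $F$. I would make this explicit by comparing the Grossencharacter of $E$ with that of its Galois conjugate $\widetilde E_d(\tfrac{1+\sqrt{1-\l}}{2})$ across the biquadratic field $FK$, using the relation $\l=-4\mu(\mu-1)$ and the normalizer element $W$ from Table~\ref{fams} that interchanges $\mu$ and $1-\mu$; the upshot should be that the descent multiplies the unit contribution by precisely $\left(\frac{k_d}{p}\right)$ when $p$ is inert in $F$, so that $\psi$ differs from $\psi_0^2$ by exactly this twist. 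Pinning down the twist uniformly in $d$, rather than by an \emph{ad hoc} check for each of the four families, is the delicate point; once it is settled, the three cases of $a_p(f)$ coincide with those of $L$, and Theorem~\ref{thm:WIN2b} gives $[F_{0,(\frac12,\frac1d,\frac{d-1}{d})}]_{p-1}(\l)\equiv L\equiv a_p(f)\pmod{p^2}$, as claimed.
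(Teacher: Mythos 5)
Your overall strategy --- run Theorem \ref{thm:WIN2b} through the modularity of CM elliptic curves and realize $L$ as the trace of Frobenius on a weight~$3$ theta series --- is the same circle of ideas as the paper's proof, but you package it differently and, more importantly, you stop short of the one step that carries the actual content. You build $f$ by descending the non-Tate constituent of $\Sym^2\rho_{E_-,\ell}$ from $G_{\Q(\sqrt{1-\l})}$ to $G_\Q$, and you correctly observe that the whole difficulty is concentrated in identifying the quadratic twist that this descent introduces: you assert that ``the upshot \emph{should} be'' that the twist is exactly $\left(\frac{k_d}{\cdot}\right)$, and you explicitly defer pinning it down (``once it is settled\ldots''). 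That identification is not a formality --- it is precisely where the constants $k_2=k_6=-1$, $k_3=-3$, $k_4=-2$ enter, and without it your argument only shows that \emph{some} quadratic twist of a weight~$3$ CM form matches $L$ up to an undetermined sign in the inert case. In the paper this content is supplied by Proposition \ref{prop: traces}, which proves $a_q(\widetilde E_d(t))=\phi_q(k_d)\,a_q(\widetilde E_d(1-t))$ by writing down, for each $d$, the explicit isogeny coming from the normalizer $W$ of Table \ref{fams} as a composition of an $\F_q$-rational isogeny with a quadratic twist by $k_d$. Note also that this is done case by case in $d$; the uniform-in-$d$ argument you hope for is not what the paper does, and you should not expect to avoid exhibiting the isogenies.

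Structurally, the paper also avoids your descent problem altogether: rather than taking $\Sym^2$ of the single curve $E_-$ and descending, it works with both conjugate curves at once, attaching Grossencharacters $\psi_\pm$ to $E_\pm$ over $\Q(\sqrt{1-\l})$ and defining $f$ as the weight~$3$ form corresponding to $\psi_{k_d}\psi_-\psi_+$, where $\psi_{k_d}$ is induced by $\left(\frac{k_d}{\cdot}\right)$. The point is that both ordinary cases of $L$ can be written uniformly as $\left(\frac{k_d}{p}\right)u_p(E_-)u_p(E_+)$: in the split case because Proposition \ref{prop: traces} gives $u_p(E_+)=\left(\frac{k_d}{p}\right)u_p(E_-)$, and in the inert case because (as in Case~2 of the proof of Theorem \ref{thm:WIN2b}) $u_p$ is a unit root for both $E_-$ and $E_+$. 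This quantity is then visibly the unit root of the Hecke polynomial $T^2-a_p(f)T+p^2$, so $L\equiv u_p(f)+p^2/u_p(f)=a_p(f)\pmod{p^2}$. If you want to complete your version, the missing lemma you must prove is exactly the $k_d$-twist relation between $E_-$ and $E_+$; everything else in your outline (the decomposition of $\Sym^2$, the vanishing at supersingular primes, and the $\psi(\bar{\p})\equiv 0\pmod{p^2}$ estimate in the split ordinary case) is fine.
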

\noindent We note that the case when $\l=1$ is given by Mortenson \cite{Mortenson}. 

\begin{proof} 
By the modularity of CM elliptic curves over real quadratic fields, there exist Grossencharacters $\psi_+$ and $\psi_-$ such that the $L$-functions of $\psi_+$ and $E_+:=\widetilde{E}_d\left(\frac{1+\sqrt{1-\lambda}}{2}\right)$, and respectively $\psi_-$ and $E_-:=\widetilde{E}_d\left(\frac{1-\sqrt{1-\lambda}}{2}\right)$, coincide. Namely, 
\begin{align*}
L(\psi_+, s) &=L(E_+/\Q(\sqrt{1-\l}),s), \\
L(\psi_-, s) &=L(E_-/\Q(\sqrt{1-\l}),s).   
\end{align*}  
Let $\psi_{k_d}$ be the character induced by the quadratic character $\left(\frac{k_d}\cdot\right)$.  Then there exists a weight $3$ CM modular form $f$ with an imprimitive Dirichlet character $\chi$ determined by the CM discriminant corresponding to $\psi_{k_d}\psi_-\psi_+$.  Hence, by Theorem \ref{thm:WIN2b} one has for supersingular primes  ($\chi(p)=-1$) that
$$
[F_{0,(\frac12, \frac{1}{d}, \frac{d-1}{d})}]_{p-1}(\l) \equiv 0 =a_p(f) \pmod{p^2}, 
$$
and for ordinary primes ($\chi(p)=1$), 
\begin{align*}
[F_{0,(\frac12, \frac{1}{d}, \frac{d-1}{d})}]_{p-1}(\l) & \equiv 
\begin{cases}
u_p^2(E_-)  & \text{if } \left(\frac{1-\l}p \right)=1, \\
\left( \frac{k_d}{p} \right)u_p^2(E_-)  & \text{if } \left(\frac{1-\l}p \right)=-1, 
\end{cases} \\
& \equiv 
\begin{cases}
\left( \frac{k_d}{p} \right)u_p(E_-) u_p(E_+) & \text{if } \left(\frac{1-\l}p \right)=1, \\
\left( \frac{k_d}{p} \right)u_p(E_-) u_p(E_+)   & \text{if } \left(\frac{1-\l}p \right)=-1, 
\end{cases} \\
& \equiv  u_p(f)+  p^2/u_p(f)  \pmod{p^2},
\end{align*}
where $u_p(f)$ and $ p^2/u_p(f)$ are the roots of the Hecke polynomial $T^2-a_p(f)T+p^2$ 
attached to $f$ at $p$. 
\end{proof}

By the above discussion, we have modularity for the Galois representations of $\mbox{Gal}(\ol \Q/\Q)$ attached to $F_{0,(\frac12, \frac{1}{d}, \frac{d-1}{d})}(\l)$ arising from the tensor product of the Galois representations of $\mbox{Gal}(\ol \Q/\Q(\sqrt{1-\l}))$ attached to the CM curves $E_-$ and $E_+$.  
In particular, denote $\rho_\ell:=\rho_{E_-} \otimes \rho_{E_+}$. Then $\rho_\ell$ decomposes as 
$$\rho_\ell = \sigma_\ell \oplus \epsilon_\ell\oplus \epsilon_\ell,$$  where $\epsilon_\ell$ is the cyclotomic character tensored by the CM quadratic character and $\sigma_\ell$ is a 2-dimensional representation satisfying $\text{Tr}(\sigma_\ell(\Frob_p))=a_p(g)$ for a weight-3 form $g$ with CM arising from the Grossencharacter $\psi_-\psi_+$ defined in the proof of Corollary~\ref{cor: modularity}. The relationship between $g$ and the cusp form $f$ in the proof of Corollary \ref{cor: modularity} is $$f=\left(\frac{k_d}\cdot\right)\otimes g,$$  which follows from  Theorem \ref{thm:WIN2b}.   In particular, if $p$ is supersingular, since $g$ has CM it follows that $a_p(g) =0$.  If $p$ is ordinary, then since the eigenvalues of $\rho_{E_-, \ell}$ are $\varepsilon u_p(E_-)$ and $\varepsilon p/u_p(E_-)$, with $\varepsilon^2= 1$, it follows that the eigenvalues of $\rho_\ell$ are $u_p(E_-)^2$, $p$, $p$, and $p^2/u_p(E_-)^2$. Thus,
\[
a_p(g)\equiv  \text{Tr}(\sigma_\ell(\Frob_p)) = u_p(E_-)^2 + \frac{p^2}{u_p(E_-)^2}  \pmod{p^2}.
\]

\subsection{Special values and L-values.} \label{subseq: specialvals} 
In \cite[Exs. 5.2-5.6]{WIN5}, some special values of the weight $4$ modular form $Z(\tau) = F_{0,(\frac12, \frac{1}{d}, \frac{d-1}{d})}(X(\tau))^2$ are computed in terms of Gamma functions.  Here we interpret these in terms of $\Omega_K$, defined in (\ref{omegadef}). As discussed in the proof of Lemma \ref{lem:deltaquotioent}, 
\[
F_{0,(\frac12, \frac1d,\frac{d-1}{d})}(X(\gamma\tau_0))^2 \sim \Omega_K^4,
\]
where $K$ is the CM-field of the elliptic curve $\widetilde{E}_d\left(\frac{1-\sqrt{1-X(\gamma \tau_0)}}{2}\right)$. 
In particular, the evaluations of $F_{0,(\frac12, \frac{1}{d}, \frac{d-1}{d})}(X(\gamma \tau_0))^2$ given in \cite[Exs. 5.2-5.6]{WIN5} yield the following equalities, 

\begin{align}
        F_{0,(\frac 1{12}, \frac{5}{12})}\left(\frac{27}{125}\right)^2 & =  F_{0,(\frac12, \frac{1}{6}, \frac{5}{6})}\left(\frac{27}{125}\right) =\frac{\sqrt 5}8 \Omega_{\Q(\sqrt{-8})}^2 \label{eq:sp_val_1} \\
         F_{0,(\frac 1{12}, \frac{5}{12})}\left(\frac{4}{125}\right)^2& =  F_{0,(\frac12, \frac{1}{6}, \frac{5}{6})}\left(\frac{4}{125}\right)=\frac{\sqrt {15}}{ 3 \cdot \sqrt[3]{32}} \Omega_{\Q(\sqrt{-3})}^2\\
         F_{0,(\frac 1{8}, \frac{3}{8})}\left(\frac{1}{9}\right)^2& =  F_{0,(\frac12, \frac{1}{4}, \frac{3}{4})}\left(\frac{1}{9}\right)=\frac1{\sqrt {24}} \Omega_{\Q(\sqrt{-24})}^2\\
         F_{0,(\frac 1{3}, \frac{1}{6})}\left(\frac{1}{2}\right)^2& =  F_{0,(\frac12, \frac{1}{3}, \frac{2}{3})}\left(\frac{1}{2}\right)=\frac{\sqrt {3}}{8} \Omega_{\Q(\sqrt{-24})}^2\\    
         F_{0,(\frac 1{3}, \frac{1}{6})}\left(\frac{4}{125}\right)^2& =  F_{0,(\frac12, \frac{1}{3}, \frac{2}{3})}\left(\frac{4}{125}\right)=\frac{\sqrt {3}}{8} \Omega_{\Q(\sqrt{-15})}^2. \label{eq:sp_val_5}
    \end{align}

We expect that the special values in \eqref{eq:sp_val_1}-\eqref{eq:sp_val_5} are each related to the $L$-value of a weight $3$ CM modular form through the modularity of CM elliptic curves.    
For example, by \cite[Theorem 2]{LLT} we have 
$$
     \pi^2\cdot F_{0,(\frac12, \frac{1}{6}, \frac{5}{6})}\left(\frac{4}{125}\right)     =6\sqrt 5 \cdot  L(\eta(2\tau)^3\eta(6\tau)^3,2)
     =  9\sqrt 5 \cdot  L(\eta(6\tau)^4,1)^2,
$$
where for the elliptic curve $E:y^2=x^3+1$,
$$ L(\eta(6\tau)^4,1) =L(E,1)=\frac{\pi}{3\sqrt[4]{3}\sqrt[6]{32}}\Omega_{\Q(\sqrt{-3})}.$$ 
In particular, the following relation between weight-3 cusp forms, $f_{300.3.g.b}= \left(\frac 5 \cdot \right) \otimes\eta(2\tau)^3\eta(6\tau)^3$, provides the algebraic relation 
$$
L(f_{300.3.g.b},2)\sim \sqrt{5} L(\eta(2\tau)^3\eta(6\tau)^3,2), 
$$
where $f_{300.3.g.b}$ is the weight-3 cusp form identified in Corollary \ref{cor: modularity}.  We expect that the methods in \cite{LLT} could be used to show that $$\pi^2\cdot F_{0,(\frac12, \frac{1}{6}, \frac{5}{6})}\left(\frac{4}{125}\right) =\frac{25}4 L(f_{300.3.g.b},2).$$  

\noindent In light of this, We make the following conjecture based on numeric evidence. 

\begin{conjecture}
For modular forms $f$ labeled with their LMFDB labels \cite{LMFDB},
\begin{align*}
        \pi^2\cdot F_{0,(\frac12, \frac{1}{6}, \frac{5}{6})}\left(\frac{27}{125}\right)& =\frac{25}2 L(f_{\href{https://www.lmfdb.org/ModularForm/GL2/Q/holomorphic/800/3/g/a/}{800.3.g.a}},2), \\
         \pi^2\cdot  F_{0,(\frac12, \frac{1}{6}, \frac{5}{6})}\left(\frac{4}{125}\right)&= \frac{25}4 L(f_{\href{https://www.lmfdb.org/ModularForm/GL2/Q/holomorphic/300/3/g/b/}{300.3.g.b}},2),\\
        \pi^2\cdot   F_{0,(\frac12, \frac{1}{4}, \frac{3}{4})}\left(\frac{1}{9}\right)&= 12L( f_{\href{https://www.lmfdb.org/ModularForm/GL2/Q/holomorphic/24/3/h/a/}{24.3.h.a}},2), \\
       \pi^2\cdot    F_{0,(\frac12, \frac{1}{3}, \frac{2}{3})}\left(\frac{1}{2}\right) &= 9 L( f_{\href{https://www.lmfdb.org/ModularForm/GL2/Q/holomorphic/24/3/h/b/}{24.3.h.b}},2), \\
        \pi^2\cdot    F_{0,(\frac12, \frac{1}{3}, \frac{2}{3})}\left(\frac{4}{125}\right)&=\frac{45}4 L ( f_{\href{https://www.lmfdb.org/ModularForm/GL2/Q/holomorphic/15/3/d/b/}{15.3.d.b}},2).
\end{align*}
\end{conjecture}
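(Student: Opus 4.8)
The plan is to show that each identity is the exact shape of a commensurability that is already visible at the level of periods, and then to pin down the rational constant. Combining the special-value evaluations \eqref{eq:sp_val_1}--\eqref{eq:sp_val_5} with the definition \eqref{omegadef}, each left-hand side $\pi^2\cdot F_{0,(\frac12,\frac1d,\frac{d-1}d)}(\lambda)$ is an explicit algebraic multiple of $\pi^2\Omega_K^2$, where $K$ is the CM field of the curve $\widetilde{E}_d\!\left(\frac{1-\sqrt{1-\lambda}}2\right)$. Thus the conjecture is equivalent, for each $f$, to the assertion that the critical value $L(f,2)$ equals a specific algebraic multiple of $\pi^2\Omega_K^2$. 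By Corollary~\ref{cor: modularity} and the surrounding discussion, $f=\left(\frac{k_d}\cdot\right)\otimes g$, where $g$ is the weight-$3$ CM form attached to the Grossencharacter $\psi_-\psi_+$; so the first step is to reduce the statement for $f$ to one for $g$, tracking the (algebraic) effect of the quadratic twist on the critical value, exactly as in the displayed relation $L(f_{300.3.g.b},2)\sim\sqrt5\,L(\eta(2\tau)^3\eta(6\tau)^3,2)$.

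The second step is to evaluate $L(g,2)$ in terms of $\pi^2\Omega_K^2$. Here I would follow the method of \cite{LLT} illustrated in the worked $\lambda=\frac4{125}$ case: realize $g$ as an explicit $\eta$-quotient, and express its critical value, up to an algebraic factor, as the square of a weight-$2$ critical value, $L(g,2)\sim L(E,1)^2$, where $E$ is the associated CM elliptic curve (this is a relation between commensurable CM periods, not an Euler-product identity). One then invokes the evaluation $L(E,1)\sim\pi\,\Omega_K$, which for CM elliptic curves follows from Damerell's theorem together with the Chowla--Selberg formula \eqref{omegadef}, \cite{ChowlaSelberg}. The abstract algebraicity of $L(g,2)/(\pi^2\Omega_K^2)$ is guaranteed by Shimura's theory of critical values of $L$-functions of CM forms; the content of the conjecture is the \emph{exact} constant.

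Pinning down that exact constant is where the real work lies, and I expect it to be the main obstacle. The factorization $L(g,2)=c\cdot L(E,1)^2$ must be made explicit — either through Hecke-character identities relating the infinity type $z^2$ of $g$ to the infinity type $z$ of $E$, or through a direct Rankin--Selberg / Eisenstein-series computation as in \cite{LLT} — and the constant $c$, the algebraic factor appearing in \eqref{eq:sp_val_1}--\eqref{eq:sp_val_5}, and the twist factor from $f=\left(\frac{k_d}\cdot\right)\otimes g$ must all be combined and simplified. Each of the five cases carries a different field $K$, a different $\eta$-quotient, and a different twist, so the computation is genuinely case-by-case; the delicate points are the normalization matching between the hypergeometric period and the modular-form period, and the correct power of the Gauss sum introduced by the twist. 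Once these constants are computed symbolically, the final check against the stated rationals $\frac{25}2,\frac{25}4,12,9,\frac{45}4$ completes the proof.
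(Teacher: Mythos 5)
The statement you are addressing is labeled a \emph{conjecture} in the paper, and the paper does not prove it: the authors only verify it numerically and sketch, for the single case $\lambda=\tfrac{4}{125}$, how \cite[Theorem 2]{LLT} gives $\pi^2\cdot F_{0,(\frac12,\frac16,\frac56)}(\tfrac4{125})=6\sqrt5\,L(\eta(2\tau)^3\eta(6\tau)^3,2)=9\sqrt5\,L(\eta(6\tau)^4,1)^2$, together with the twist relation $f_{300.3.g.b}=\left(\frac5\cdot\right)\otimes\eta(2\tau)^3\eta(6\tau)^3$, which yields the identity in that case only up to an unspecified algebraic factor ($L(f_{300.3.g.b},2)\sim\sqrt5\,L(\eta(2\tau)^3\eta(6\tau)^3,2)$). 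Your proposal follows exactly this suggested route --- reduce to $\pi^2\Omega_K^2$ via \eqref{eq:sp_val_1}--\eqref{eq:sp_val_5} and Chowla--Selberg, untwist $f$ to the CM form $g$ attached to $\psi_-\psi_+$, and factor $L(g,2)$ as an algebraic multiple of $L(E,1)^2$ in the style of \cite{LLT} --- so as a strategy it is faithful to what the authors themselves envision.

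However, what you have written is a plan, not a proof, and you say so yourself: the entire content of the conjecture is the \emph{exact} rational constants $\tfrac{25}2,\tfrac{25}4,12,9,\tfrac{45}4$, and none of them is derived. The abstract commensurability $L(f,2)\sim\pi^2\Omega_K^2$ is indeed standard (Shimura, Damerell, Chowla--Selberg), but that only recovers the identities up to $\overline{\Q}^\times$; the paper already records this level of information in \S\ref{subseq: specialvals}. To close the gap you would need, for each of the five cases, (i) an explicit realization of $g$ (e.g.\ as an $\eta$-quotient or via its Grossencharacter) with a \emph{normalized} factorization $L(g,2)=c\cdot L(E,1)^2$ including the constant $c$, (ii) the exact effect of the quadratic twist $f=\left(\frac{k_d}\cdot\right)\otimes g$ on the critical value (which involves a conductor/Gauss-sum computation, not just a $\sim$), and (iii) the matching of the hypergeometric period normalization in \eqref{eq:sp_val_1}--\eqref{eq:sp_val_5} with the period used in the $L$-value formula. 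Since none of these constants is computed, the proposal does not establish the statement --- consistent with the fact that the authors leave it as a conjecture supported by numerical evidence.
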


We note that for some hypergeometric evaluations at $\l=\pm1$, special $L$-values of the corresponding cusp forms at $1$ and $2$ are obtained in work of Zagier \cite{Zagier}, Osburn and Straub \cite{Osburn-Straub}, as well as Allen, Grove, Long, and the fifth author \cite[Appendix]{HMM2}.

\section{Results and conjectures modulo \texorpdfstring{$p^3$}{}}\label{sec: modp3}

Here we discuss what is known and conjectured modulo $p^3$.  As described in \S \ref{CMcurves}, Zudilin conjectured that Theorem \ref{thm:WIN2a} holds modulo $p^3$.  Some cases of this have been confirmed, for example in works of Beukers \cite{Beukers-supercongruence}, Mortenson \cite{Mortenson-padic}, and Zudilin \cite{Zudilin-supercongruence}.  Computationally Theorem~\ref{thm:WIN2b} appears to hold modulo $p^3$ for ordinary primes, but not for supersingular primes in general.  Some progress on this has been made, including by Long and Ramakrishna \cite{LongRamakrishna} as well as Beukers \cite{Beukers-supercongruence}.  In particular, Beukers \cite[Thm. 1.27]{Beukers-supercongruence} gives the cases when $d=6$ through properties of corresponding modular forms which yields the result modulo $p^3$ given below. 

Let $D$ be a positive integer with $D\equiv 0$ or $3 \pmod4$, and define
\begin{equation}\label{def:omegaD}
\omega_D= \begin{cases} \frac{\sqrt{-D}}2 & \text{ if } D\equiv 0\pmod 4, \\ \frac{1+\sqrt{-D}}2 & \text{ if } D\equiv 3\pmod 4. \end{cases}
\end{equation}

\begin{theorem}(Theorems \ref{thm:WIN2a}, \ref{thm:WIN2b}, and \cite[Theorem 1.27]{Beukers-supercongruence})\label{thm:Beukers}
Assume the notation and assumptions in Theorem \ref{thm:WIN2a} and \eqref{def:omegaD}. Suppose $\l=1728/j(\omega_D)$.  Then, for an ordinary prime $p$, 
\[
[F_{\alpha,\left(\frac{1}{2},\frac{1}{6},\frac56\right)}]_{p-1}(\lambda)\equiv   \left( \frac{1-\l}p \right) \cdot p \pmod {p^3},
\]
and 
\[
[F_{0,(\frac12, \frac{1}{6}, \frac{5}{6})}]_{p-1}(\l) \equiv
\begin{cases}
u_p^2  & \text{if }  \left(\frac{1-\l}p \right)=1, \\
\left( \frac{-1}{p} \right)u_p^2  & \text{if } \left(\frac{1-\l}p \right)=-1, 
\end{cases} \pmod {p^3},
\]
where  $u_p$ is a fixed $p$-adic unit root of the geometric Frobenius at $p$ acting on the first cohomology of the elliptic curve $\widetilde{E}_d(\frac{1-\sqrt{1-\lambda}}{2})$.  
\end{theorem}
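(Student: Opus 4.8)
The plan is to read this as the ordinary-prime, $d=6$ specialization of Theorems \ref{thm:WIN2a} and \ref{thm:WIN2b}, with the modulus sharpened from $p^2$ to $p^3$ by Beukers' modular argument. First I would unwind the hypothesis $\l=1728/j(\omega_D)$. For $d=6$ the Hauptmodul satisfies $4t(1-t)=1728/j$ (the discussion after Table \ref{fams}), and since $\l=4\mu(1-\mu)$ with $\mu=\frac{1-\sqrt{1-\l}}2$, setting $t=\mu$ identifies the parameter with the CM point $\omega_D$ of \eqref{def:omegaD}. Hence $\widetilde{E}_6(\mu)$ has CM by the imaginary quadratic order containing $\omega_D$, and the standing hypotheses of Theorem \ref{thm:WIN2a} --- CM, $|\l|<1$ under the chosen embedding, $\Q(\l)$ totally real, $p$ unramified in $\Q(\sqrt{1-\l})$ and coprime to the discriminant --- are in force, with $p$ ordinary by assumption.

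With this setup, Theorems \ref{thm:WIN2a} and \ref{thm:WIN2b} already produce both right-hand sides modulo $p^2$. For the first congruence, Theorem \ref{thm:WIN2a} in the ordinary case ($\sgn=1$) gives $[F_{\alpha,(\frac12,\frac16,\frac56)}]_{p-1}(\l)\equiv\left(\frac{1-\l}p\right)p\pmod{p^2}$. For the second, Theorem \ref{thm:WIN2b} with $d=6$ and $k_6=-1$ gives exactly the stated two ordinary cases modulo $p^2$, since $\left(\frac{k_6}p\right)=\left(\frac{-1}p\right)$ supplies the sign when $\left(\frac{1-\l}p\right)=-1$, and the supersingular line is excluded. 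This fixes the shape of both targets, so only the passage to $p^3$ remains.

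The sharpening to $p^3$ is exactly the content of Beukers' Theorem 1.27 \cite{Beukers-supercongruence}, whose hypotheses are the present $d=6$, $\l=1728/j(\omega_D)$ configuration: the holomorphic Picard--Fuchs solution $F_{0,(\frac16,\frac56)}(t(\tau))$ is a weight-one modular form on a group commensurable with $\mathrm{PSL}_2(\Z)$, and Beukers uses the modularity of the associated symmetric-square motive to gain the extra power of $p$ at ordinary primes. I would cite his result for whichever of the two truncations he normalizes and transfer to the other using the identities already available: \eqref{eq:diff_basic} links $F_\alpha$ to $F_0$ and its derivative, \eqref{eq:geom_clausen} gives $F_{0,(\frac12,\frac16,\frac56)}(\l)=F_{0,(\frac16,\frac56)}(\mu)^2$, Lemma \ref{lem:4F3prod} rewrites the Ramanujan-type series at $\l$ as a product of two ${}_2F_1$'s at $\mu$, and Lemma \ref{lem:errmodp^3} ensures that truncating such a product at $p-1$ agrees with the product of truncations modulo $p^3$. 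Together with the unit-root identification $a(p-1)\equiv u_p\pmod p$, this reads off the $u_p^2$ of the second congruence and the $\pm p$ of the first, the mod-$p^3$ precision being carried through the transfer.

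The hard part will be this last transfer. Beukers' congruence is cast in his own normalization and for a single series, so the work is to verify that it matches ours after the quadratic change of variables $\l\leftrightarrow\mu$, which alters truncation lengths, and to confirm that the error terms only controlled to $p^2$ in the general proofs of Theorems \ref{thm:WIN2a} and \ref{thm:WIN2b} genuinely vanish modulo $p^3$ in this CM, $d=6$ case. A secondary check is that the singular moduli $1728/j(\omega_D)$ satisfy the totally-real hypothesis of Theorem \ref{thm:WIN2a} for the relevant $D$, which I would settle case-by-case from the class-field description of the conjugates of $j(\omega_D)$.
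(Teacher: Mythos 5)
Your proposal matches the paper's treatment: the paper gives no independent proof of this theorem, presenting it exactly as you do — the $d=6$, ordinary-prime specialization of Theorems \ref{thm:WIN2a} and \ref{thm:WIN2b} (which fix the shape modulo $p^2$, with $k_6=-1$ supplying the $\left(\frac{-1}{p}\right)$ factor), with the upgrade to modulus $p^3$ imported wholesale from Beukers' Theorem 1.27 via the modularity of $F_{0,(\frac16,\frac56)}(t(\tau))$. The normalization-transfer issues you flag as the ``hard part'' are real but are not addressed in the paper either, which simply cites Beukers for the $p^3$ content.
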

One may notice that the left-hand sides of the congruences in Theorem \ref{thm:Beukers} are similar to the left-hand sides of \eqref{eq:example5.2} and \eqref{eq:example5.3}. In fact, as we will demonstrate, \eqref{eq:example5.2} and \eqref{eq:example5.3} can be partially extended to hold modulo $p^3$, and we conjecture that a special case of all supercongruences arising from Theorem \ref{thm:main} hold modulo $p^3$. 

\renewcommand\theequation{\Alph{equation}}
\setcounter{equation}{15}

\begin{conjecture}\label{conj:Product}
Assume the hypotheses in Theorem~\ref{thm:main} for ${\bf b} = (\frac12, \frac{1}{d}, \frac{d-1}{d})$. Then for primes $p\geq 5$, 
\begin{multline*}
[F_{0,{\bf b}} \cdot F_{2\alpha_N, {\bf b}}]_{p-1}(\lambda) \equiv \\ 
\begin{cases}   
0  &  \text{if } \widetilde{E}_d\left(\frac{1-\sqrt{1-\lambda}}{2}\right) \text{ is supersingular at } p\\
 u_p^2 \, p   &  \text{if } \widetilde{E}_d\left(\frac{1-\sqrt{1-\lambda}}{2}\right) \text{ is ordinary at } p \text{ and }\left(\frac{1-\lambda}p \right)=1 \\
-\left( \frac{k_d}{p}\right)u_p^2 \, p &  \text{if } \widetilde{E}_d\left(\frac{1-\sqrt{1-\lambda}}{2}\right) \text{ is ordinary at } p \text{ and }\left(\frac{1-\lambda}p \right)=-1 
\end{cases} \pmod{p^3},
\end{multline*}
where $u_p$ is the unit root of the geometric Frobenius at $p$ acting on the first cohomology of the elliptic curve $\widetilde{E}_d(\frac{1-\sqrt{1-\lambda}}{2})$ and $k_2=k_6=-1$, $k_3=-3$, $k_4=-2$.
\end{conjecture}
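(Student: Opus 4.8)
The plan is to follow the proof of Theorem~\ref{thm:main} while tracking congruences one power of $p$ further. Because ${\bf b}=(\frac12,\frac1d,\frac{d-1}{d})$, Lemma~\ref{lem:errmodp^3} gives
\[
[F_{0,{\bf b}}\cdot F_{2\alpha_N,{\bf b}}]_{p-1}(\lambda)\equiv A\cdot B \pmod{p^3},\qquad A:=[F_{0,{\bf b}}]_{p-1}(\lambda),\quad B:=[F_{2\alpha_N,{\bf b}}]_{p-1}(\lambda),
\]
so it suffices to understand the two truncated factors to sufficient $p$-adic precision and multiply, invoking Deuring's criterion \cite[Ch.~13 Thm.~12]{Lang_EF} to convert the ordinary/supersingular dichotomy into the stated Legendre conditions. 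Theorem~\ref{thm:WIN2b} controls $A$ modulo $p^2$ and Theorem~\ref{thm:WIN2a} controls $B$ modulo $p^2$. In the supersingular case this already suffices: there $A\equiv 0\pmod{p^2}$ and $B\equiv-\left(\frac{1-\lambda}{p}\right)p\pmod{p^2}$, so $v_p(A)\geq 2$ and $v_p(B)\geq 1$, whence $v_p(AB)\geq 3$ and the product vanishes modulo $p^3$ unconditionally. Thus the supersingular line of the conjecture is in fact a theorem.

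The content lies in the ordinary case. Here $A$ is a $p$-adic unit with $A\equiv u_p^2$ (resp.\ $\left(\frac{k_d}{p}\right)u_p^2$) modulo $p^2$ according as $\left(\frac{1-\lambda}{p}\right)=1$ (resp.\ $-1$), while $B\equiv\left(\frac{1-\lambda}{p}\right)p\pmod{p^2}$ has valuation exactly one. Writing $B=pB'$ with $B'$ a unit, we have $AB=p\,AB'$, so to determine $AB$ modulo $p^3$ it suffices to know $AB'$ modulo $p^2$, that is, to know $A$ modulo $p^2$ (already in hand) together with $B$ modulo $p^3$. The single missing ingredient is therefore the modulo $p^3$ refinement of Theorem~\ref{thm:WIN2a},
\[
[F_{2\alpha_N,(\frac12,\frac1d,\frac{d-1}{d})}]_{p-1}(\lambda)\equiv\left(\frac{1-\lambda}{p}\right)p\pmod{p^3};
\]
granting this, multiplying $A\equiv u_p^2$ (resp.\ $\left(\frac{k_d}{p}\right)u_p^2$) by $B\equiv\left(\frac{1-\lambda}{p}\right)p$ produces $u_p^2\,p$ (resp.\ $-\left(\frac{k_d}{p}\right)u_p^2\,p$), exactly as conjectured. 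For $d=6$ this refinement is precisely Beukers' result recorded in Theorem~\ref{thm:Beukers}, the relevant $\lambda$ having the form $1728/j(\omega_D)$; hence for $d=6$ the conjecture is a theorem, yielding the modulo $p^3$ strengthenings of \eqref{eq:example5.2} and \eqref{eq:example5.3}.

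The main obstacle is thus to establish the displayed modulo $p^3$ congruence for $B$ when $d\in\{2,3,4\}$, i.e.\ Zudilin's modulo $p^3$ supercongruence for these three families. This lies genuinely beyond the crystalline Frobenius-lift argument underlying Theorems~\ref{thm:WIN2a} and~\ref{thm:WIN2b}, which is inherently a modulo $p^2$ statement: it only extracts the characteristic polynomial of a degree-$p$ (or $p^2$) Frobenius lift acting on de Rham cohomology, as in Lemma~\ref{lem:WIN2Prop16}. Reaching $p^3$ demands the finer $p$-adic Gamma-function and hypergeometric Galois-representation analysis of Beukers~\cite{Beukers-supercongruence} and Long--Ramakrishna~\cite{LongRamakrishna}. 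I would attempt to transport Beukers' $d=6$ argument, which exploits the CM weight-$3$ form attached to the family, to the remaining triangle-group cases using the explicit modular and CM data assembled in Section~\ref{sec: modular}; the chief difficulty is that the clean quadratic hypergeometric identities available for $d=6$ have no evident analogue for $d\in\{2,3,4\}$.
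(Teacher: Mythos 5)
Your proposal matches the paper's treatment: the statement is a conjecture, and the paper's own argument (Proposition~\ref{prop:p^3} together with the discussion of Theorem~\ref{thm:Beukers}) is exactly your reduction --- use Lemma~\ref{lem:errmodp^3} to replace the truncated product by the product of truncations modulo $p^3$, feed in Theorem~\ref{thm:WIN2b} modulo $p^2$ for the unit factor, and observe that the only missing ingredient is the modulo $p^3$ refinement of Theorem~\ref{thm:WIN2a} (Zudilin's conjecture), which is known for $d=6$ by Beukers. Your added observations --- that the supersingular case is unconditional since $v_p(A)\geq 2$ and $v_p(B)\geq 1$ force $v_p(AB)\geq 3$, and that the $d=6$ instances \eqref{eq:example5.2}, \eqref{eq:example5.3} are therefore theorems --- are correct and consistent with the paper's remarks.
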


The cases of Conjecture \ref{conj:Product} arising from \eqref{eq:example5.2} and \eqref{eq:example5.3} have been verified using Theorem \ref{thm:Beukers}. In particular, \eqref{eq:example5.2} can be extended to obtain for $\l=\frac{27}{125}$ and primes $p\neq 2,5$,
\setcounter{equation}{15}
\renewcommand\theequation{\Alph{equation}}
  \begin{multline} 
  \label{eq:example5.2b}
 \left[F_{0,(\frac12, \frac{1}{6}, \frac{5}{6})}\cdot F_{\frac{28}{3},(\frac12, \frac{1}{6}, \frac{5}{6})}\right]_{p-1}\left(\frac{27}{125}\right) \\ \equiv 
\begin{dcases}
pu_p^2 & \text{ if } \left( \frac{-2}{p} \right) =1 \text{ and } \left( \frac{1-\lambda}{p} \right) =1\\
-\left(\frac{-1}{p}\right)pu_p^2 & \text{ if } \left( \frac{-2}{p} \right) =1 \text{ and } \left( \frac{1-\lambda}{p} \right) =-1\\
\end{dcases}
\pmod{p^3}.
  \end{multline}
Similarly, we can extend \eqref{eq:example5.3} to obtain for primes $p\geq 7$,
\begin{multline}
\label{eq:example5.3b}
   \left[F_{0,(\frac12, \frac{1}{6}, \frac{5}{6})}\cdot F_{11,(\frac12, \frac{1}{6}, \frac{5}{6})}\right]_{p-1}\left(\frac{4}{125}\right) \\ \equiv 
\begin{dcases}
pu_p^2 & \text{ if } \left( \frac{-3}{p} \right) =1 \text{ and } \left( \frac{5}{p} \right)=1\\
-\left(\frac{-1}{p}\right)pu_p^2 & \text{ if } \left( \frac{-3}{p} \right) =1 \text{ and } \left( \frac{5}{p} \right)=-1\\
\end{dcases}
\pmod{p^3}.
\end{multline}

\setcounter{equation}{6}
\renewcommand{\theequation}{\arabic{section}.\arabic{equation}}

We now show that Zudilin's conjecture implies Conjecture \ref{conj:Product}.  

\begin{proposition}\label{prop:p^3}
If Theorem \ref{thm:WIN2a} holds modulo $p^3$, then Conjecture \ref{conj:Product} is true. 
\end{proposition}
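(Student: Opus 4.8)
The plan is to factor the left-hand side of Conjecture~\ref{conj:Product} into two separate truncations and then control each factor by a congruence of exactly the strength we can afford. First I would invoke Lemma~\ref{lem:errmodp^3}, which for ${\bf b} = (\frac12, \frac1d, \frac{d-1}{d})$ upgrades the passage from the truncation of a product to the product of truncations all the way to modulus $p^3$:
\[
[F_{0,{\bf b}}\cdot F_{2\alpha_N,{\bf b}}]_{p-1}(\lambda) \equiv [F_{0,{\bf b}}]_{p-1}(\lambda)\cdot [F_{2\alpha_N,{\bf b}}]_{p-1}(\lambda) \pmod{p^3}.
\]
It therefore suffices to understand the two factors on the right to the appropriate accuracy.

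For the second factor I would use that, as established in the proof of Theorem~\ref{thm:main} via \eqref{eq:WIN2cond}, the pair $2\alpha_N,\lambda$ produces a series of the form \eqref{eq:ram} satisfying the hypotheses of Theorem~\ref{thm:WIN2a}. Under the standing assumption that Theorem~\ref{thm:WIN2a} holds modulo $p^3$, this gives
\[
[F_{2\alpha_N,(\frac12, \frac1d, \frac{d-1}{d})}]_{p-1}(\lambda) \equiv \sgn\cdot\left(\tfrac{1-\lambda}{p}\right)\cdot p \pmod{p^3},
\]
where $\sgn = 1$ (resp.\ $-1$) according as $\widetilde{E}_d(\tfrac{1-\sqrt{1-\lambda}}{2})$ is ordinary (resp.\ supersingular). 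For the first factor I would use the \emph{unconditional} Theorem~\ref{thm:WIN2b}, which supplies only
\[
[F_{0,(\frac12, \frac1d, \frac{d-1}{d})}]_{p-1}(\lambda) \equiv L \pmod{p^2},
\]
with $L \in \{0,\; u_p^2,\; (\tfrac{k_d}{p})u_p^2\}$ determined by the reduction type and by $(\tfrac{1-\lambda}{p})$.

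The crux — and the reason a mere mod $p^2$ input on the first factor yields a mod $p^3$ conclusion — is that the second factor is divisible by $p$. Writing the first factor as $L + p^2\gamma$ and the second as $\sgn(\tfrac{1-\lambda}{p})\,p + p^3\delta$ with $\gamma,\delta\in\Z_p$, every term in the expanded product except $L\cdot\sgn(\tfrac{1-\lambda}{p})\cdot p$ carries a factor of $p^3$, so
\[
[F_{0,{\bf b}}]_{p-1}(\lambda)\cdot [F_{2\alpha_N,{\bf b}}]_{p-1}(\lambda) \equiv L\cdot \sgn\cdot\left(\tfrac{1-\lambda}{p}\right)\cdot p \pmod{p^3}.
\]
It then remains to substitute the three values of $L$ together with the corresponding sign. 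In the supersingular case $L=0$ forces the product to vanish modulo $p^3$; in the ordinary case $\sgn=1$, and the substitutions $L=u_p^2$ (when $(\tfrac{1-\lambda}{p})=1$) and $L=(\tfrac{k_d}{p})u_p^2$ (when $(\tfrac{1-\lambda}{p})=-1$) reproduce exactly the right-hand sides $u_p^2\,p$ and $-(\tfrac{k_d}{p})u_p^2\,p$ of the conjecture. There is no delicate analytic step: the only things requiring care are the bookkeeping of which congruence holds modulo which power of $p$ and the verification — already carried out in the proof of Theorem~\ref{thm:main} — that the hypotheses of Theorem~\ref{thm:main} genuinely supply those of Theorems~\ref{thm:WIN2a} and~\ref{thm:WIN2b}.
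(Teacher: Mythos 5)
Your proposal is correct and follows essentially the same route as the paper: both factor via Lemma~\ref{lem:errmodp^3}, feed the hypothesized mod $p^3$ version of Theorem~\ref{thm:WIN2a} into the $F_{2\alpha_N,{\bf b}}$ factor, and then use the unconditional mod $p^2$ statement of Theorem~\ref{thm:WIN2b} for the $F_{0,{\bf b}}$ factor, with the extra factor of $p$ upgrading the product to a mod $p^3$ congruence. Your explicit $(L+p^2\gamma)(\sgn(\tfrac{1-\lambda}{p})p+p^3\delta)$ bookkeeping simply spells out the step the paper compresses into ``applying Theorem~\ref{thm:WIN2b} to $[F_{0,{\bf b}}]_{p-1}(\lambda)\cdot p$.''
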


\begin{proof} 
If Theorem \ref{thm:WIN2a} holds modulo $p^3$, then using Lemma \ref{lem:errmodp^3} we have
\begin{multline*}
[F_{0,{\bf b}}\cdot F_{2\alpha_N, {\bf b}}]_{p-1}(\l) \equiv \\
\begin{cases}
-\left( \frac{1-\l}p \right) [F_{0,{\bf b}}]_{p-1}(\l) \cdot p  & \text{if } \widetilde{E}_d\left(\frac{1-\sqrt{1-\lambda}}{2}\right) \text{ is supersingular at } p\\
[F_{0,{\bf b}}]_{p-1}(\l) \cdot p  & \text{if } \widetilde{E}_d\left(\frac{1-\sqrt{1-\lambda}}{2}\right) \text{ is ordinary at } p \text{ and }\left(\frac{1-\l}p \right)=1 \\
-[F_{0,{\bf b}}]_{p-1}(\l) \cdot p  & \text{if } \widetilde{E}_d\left(\frac{1-\sqrt{1-\lambda}}{2}\right) \text{ is ordinary at } p \text{ and }\left(\frac{1-\l}p \right)=-1 
\end{cases}
\pmod {p^3}.
\end{multline*}
Thus applying Theorem \ref{thm:WIN2b} to $[F_{0,{\bf b}}]_{p-1}(\l) \cdot p$ to obtain congruences modulo $p^3$, we obtain the result.
\end{proof}
We numerically verified for applicable primes $p<150$ that Zudulin's conjecture \cite{Zudilin} extending Theorem \ref{thm:WIN2a} to hold modulo $p^3$ holds for the following congruences stemming from \eqref{eq:example5.4}-\eqref{eq:example5.6},
\begin{align}
   \left[ F_{8,(\frac12, \frac{1}{4}, \frac{3}{4})}\right]_{p-1}\left(\frac{1}{9}\right) \equiv  {\sgn} \cdot \left( \frac{1-\l}p \right) \cdot p & \pmod {p^3}, \label{5.9p^3} \\
   & \text{for $\lambda=\frac{1}{9}$ and  primes $5\le p<150$;} \nonumber 
\end{align}
\begin{align}   
    \left[F_{6,(\frac12, \frac{1}{3}, \frac{2}{3})}\right]_{p-1}\left(\frac{1}{2}\right) \equiv  {\sgn} \cdot \left( \frac{1-\l}p \right) \cdot p & \pmod {p^3}, \label{5.10p^3} \\
    & \text{for $\lambda=\frac{1}{2}$ and  primes $5\le p<150$;} \nonumber \\
    \left[F_{\frac{33}{4},(\frac12, \frac{1}{3}, \frac{2}{3})}\right]_{p-1} \left(\frac{4}{125}\right)\equiv  {\sgn} \cdot \left( \frac{1-\l}p \right) \cdot p & \pmod {p^3}, \label{5.11p^3} \\
    & \text{for $\lambda=\frac{4}{125}$ and  primes $7\le p <150$, $p\ne 11$.} \nonumber 
\end{align}
By Proposition \ref{prop:p^3}, if \eqref{5.9p^3}, \eqref{5.10p^3}, or \eqref{5.11p^3} hold, then \eqref{eq:example5.4}, \eqref{eq:example5.5}, and \eqref{eq:example5.6}, respectively hold modulo $p^3$.  However, we note that Theorem~\ref{thm:WIN2b} only seems to hold modulo $p^3$ for ordinary primes. For supersingular primes, the result only holds modulo $p^2$.  We have checked this for the $F_{0,{\bf b}}(z)$ term corresponding to \eqref{eq:example5.4}, \eqref{eq:example5.5}, and \eqref{eq:example5.6}, for the appropriate primes $p<150$.

Finally,  when ${\bf b} = (\frac{1}{d}, \frac{d-1}{d})$  in Theorem \ref{thm:main}, the congruences do not seem to generally extend to a higher power of $p$. For instance, \eqref{eq:example4.2a},   \eqref{eq:example4.3a},  \eqref{eq:example4.4a},  \eqref{eq:example4.5a},  \eqref{eq:example4.6} do not  hold modulo $p^3$ for appropriate primes $p<150$. However, for appropriate primes $p<150$ Example \eqref{eq:example4.7} does hold modulo $p^3$.

We next make the following conjecture modulo $p^3$ related to Corollary \ref{cor: modularity}.
\begin{conjecture}\label{conj:mod3}
Assume the hypotheses in Corollary \ref{cor: modularity}.  For each $[F_{0,(\frac12, \frac{1}{d}, \frac{d-1}{d})}]_{p-1}(\l)$ appearing in  Corollary \ref{weight3} such that the corresponding $a_p(f) \neq 0$, 
$$
[F_{0,(\frac12, \frac{1}{d}, \frac{d-1}{d})}]_{p-1}(\l) \equiv u_p(f)  \pmod{p^3}, \\
$$ 
where $u_p(f) $ is the unit root of the Hecke polynomial $T^2- a_p(f)T+p^2$ associated to $f$. 
\end{conjecture}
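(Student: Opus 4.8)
Since $\widetilde{E}_d$ has CM, the hypothesis $a_p(f)\neq 0$ selects exactly the ordinary primes, and Conjecture~\ref{conj:mod3} refines the unconditional mod $p^2$ statement of Corollary~\ref{cor: modularity}. Indeed $u_p(f)$ is a $p$-adic unit with $a_p(f)=u_p(f)+p^2/u_p(f)$, so $u_p(f)\equiv a_p(f)\pmod{p^2}$, and Corollary~\ref{cor: modularity} already gives $[F_{0,(\frac12,\frac1d,\frac{d-1}{d})}]_{p-1}(\l)\equiv a_p(f)\equiv u_p(f)\pmod{p^2}$. The plan is therefore to sharpen this single $p$-power. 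By Clausen's identity \eqref{eq:geom_clausen} together with the decomposition $\rho_\ell=\rho_{E_-}\otimes\rho_{E_+}$ recorded after Corollary~\ref{cor: modularity}, the truncation is governed by a rank-$3$ CM motive whose ordinary unit root is $u_p(f)$ up to the explicit sign $\left(\frac{k_d}{p}\right)$ already appearing in Theorem~\ref{thm:WIN2b}; so Conjecture~\ref{conj:mod3} is equivalent to a mod $p^3$ strengthening of Theorem~\ref{thm:WIN2b} for ordinary $p$.

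The core step is that strengthening. I would return to Lemma~\ref{lem:WIN2Prop16}, whose crystalline input $a(p^2-1)-\operatorname{tr}(\Phi)a(p-1)+p\equiv 0\pmod{p^2}$ is what limits the precision to $p^2$. In the ordinary case the endomorphism ring $R$ acts on $H^1_{\mathrm{dR}}(E_-/A)$ commuting with a degree-$p$ Frobenius lift $\Phi$, and the unit-root subspace is a $\Phi$-stable rank-one $R$-submodule on which $\Phi$ acts through $u_p(E_-)$. The plan is to exploit this extra CM symmetry in Katz's congruences \cite[Thm.~6.1]{Katz-crystalline}: the formal expansion coefficients $a(n)$ of the eigendifferential $\omega$ should then satisfy Dwork-type congruences relating $a(p-1)$ and $a(p^2-1)$ to $u_p(E_-)$ to one order higher than the bare crystalline relation provides, and squaring (to pass from $\omega$ on $E_-$ to the symmetric-square period governing $F_{0,(\frac12,\frac1d,\frac{d-1}{d})}$) should give the unit root to precision $p^3$. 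Feeding the result back through Lemma~\ref{lem:4F3prod} and the product identity, with the sign inserted exactly as in the proof of Theorem~\ref{thm:WIN2b}, would complete the argument.

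As an independent route that already disposes of $d=6$, I would generalize the modular-forms argument behind Beukers's Theorem~\ref{thm:Beukers}. There the mod $p^3$ congruence comes from the Hecke action on the associated weight-$3$ CM form, and the same mechanism ought to carry over to $d\in\{2,3,4\}$ once the weight-$3$ forms produced in Corollary~\ref{cor: modularity} are written down explicitly. This route has the virtue of producing $u_p(f)$ directly from the Hecke polynomial $T^2-a_p(f)T+p^2$, bypassing the translation through $E_\pm$.

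The principal obstacle is precisely the passage from $p^2$ to $p^3$. The crystalline/Dwork input is intrinsically a statement about the characteristic polynomial of Frobenius modulo $p^2$, and the supersingular case (where the congruence genuinely fails modulo $p^3$) shows that no formal improvement can hold in general. The argument must therefore use ordinarity essentially, quantifying why the unit-root projection of the hypergeometric period—which agrees with $u_p(f)$ to order $p^2$ unconditionally—acquires an extra order of vanishing in its error exactly when the unit-root line splits off $p$-adically. Securing this gain of one $p$-power uniformly in $d$, rather than form-by-form as in the $d=6$ case of Theorem~\ref{thm:Beukers}, is the crux, which is why the statement is recorded here only as a conjecture.
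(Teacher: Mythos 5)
The statement you are addressing is recorded in the paper as a conjecture, and the paper supplies no proof of it: the only support offered is Beukers's result (Theorem \ref{thm:Beukers}), which settles the analogous mod $p^3$ statement for $d=6$ via properties of the associated weight-$3$ modular forms, together with numerical verification of related congruences for applicable primes $p<150$. Your proposal likewise does not prove the statement, and you say so explicitly; judged against the paper there is therefore nothing to reconcile, but judged as a proof your argument stops exactly where the paper does. Your preliminary reductions are correct and consistent with the discussion in Section \ref{sec: modp3}: for a CM form, $a_p(f)\neq 0$ picks out the ordinary primes; $u_p(f)\equiv a_p(f)\pmod{p^2}$ since $p^2/u_p(f)\equiv 0\pmod{p^2}$, so the mod $p^2$ version is Corollary \ref{cor: modularity}; and the conjecture amounts to a mod $p^3$ strengthening of Theorem \ref{thm:WIN2b} at ordinary primes, which the authors report holds numerically but do not prove.

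Two cautions about the route you sketch. First, the crystalline relation $a(p^2-1)-\operatorname{tr}(\Phi)a(p-1)+p\equiv 0\pmod{p^2}$ from Lemma \ref{lem:WIN2Prop16} is used in the paper only to pin down $a(p-1)b(p-1)$ modulo $p^2$; there is no indication in \cite{Katz-crystalline} or elsewhere in the paper that the CM symmetry upgrades this precision for free, and the generic failure of the supersingular case modulo $p^3$ shows that any such upgrade must use ordinarity in an essential, quantitative way --- which is precisely the step you leave open. Second, a small imprecision: the object governing $F_{0,(\frac12,\frac1d,\frac{d-1}{d})}$ in the paper's framework is the rank-$4$ tensor product $\rho_{E_-}\otimes\rho_{E_+}$, which decomposes as $\sigma_\ell\oplus\epsilon_\ell\oplus\epsilon_\ell$; it is the rank-$3$ symmetric square only in the self-dual situation, so ``rank-$3$ CM motive'' should be adjusted. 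Your alternative route through Beukers's modular-form mechanism is exactly the direction the authors themselves point to, but extending it from $d=6$ to $d\in\{2,3,4\}$ remains open, which is why the statement is (correctly) left as a conjecture.
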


\section{Appendix --- Proof of Theorem \ref{thm:WIN2b} and hypergeoemtric character sums} 
\label{sec:appendix}
\renewcommand\theequation{\arabic{equation}}
\renewcommand{\theequation}{\arabic{section}.\arabic{equation}}

In this appendix, we provide an explanation of  Theorem \ref{thm:WIN2b} through the arithmetic properties of the hypergeometric elliptic families, and further record some parallel results in terms of finite hypergeometric functions introduced by Beukers, Cohen, and Mellit in \cite{BCM}, which allow us to see the supercongrunce in Theorem \ref{thm:WIN2b} from the character sum perspective.

\subsection{Proof of Theorem \ref{thm:WIN2b}}  \label{sec: WIN2proof} 

When $E$ is an elliptic curve defined over a finite field $\F_q$ containing $q$ elements, define 
\begin{equation*} 
a_q(E) := q+1-\#  E(\F_q).
\end{equation*}

\begin{proposition}\label{prop: traces} 
    Let $d\in \{2,3,4,6\}$, $p\geq 5$ prime, and $\F_q$ a finite field with $q=p^r$ elements. For $t\in \F_q$ such that both $\widetilde E_{d}(t)$ and  $\widetilde E_{d}(1-t)$ are elliptic curves defined over $\F_q$, one has
    $$
      a_q(\widetilde E_{d}(t)) =\phi_q(k_d)a_q(\widetilde E_{d}(1-t)),
    $$
    where $\phi_q$ is the quadratic character of $\F_q$, and $k_2=k_6=-1$, $k_3=-3$, $k_4=-2$. 
\end{proposition}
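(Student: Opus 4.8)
The plan is to show that, for each $d$, the curve $\widetilde E_d(t)$ is isogenous over $\F_q$ to the quadratic twist by $k_d$ of $\widetilde E_d(1-t)$, and then to invoke two standard facts about Frobenius traces over finite fields: (i) $\F_q$-isogenous elliptic curves have equal $a_q$; and (ii) if $E^{(D)}$ denotes the quadratic twist of $E$ by $D\in\F_q^\times$ (legitimate since $p\geq 5$, so $2\in\F_q^\times$), then $a_q(E^{(D)})=\phi_q(D)\,a_q(E)$. Granting the isogeny claim, these combine to give
$$a_q(\widetilde E_d(t)) = a_q\bigl(\widetilde E_d(1-t)^{(k_d)}\bigr) = \phi_q(k_d)\,a_q(\widetilde E_d(1-t)),$$
which is exactly the assertion. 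The involution $W$ of Table~\ref{fams} predicts which isogeny is in play, and I would organize the proof around its degree.

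For the comparison I would first record the Weierstrass invariants $c_4,c_6,\Delta$ of each $\widetilde E_d(t)$ as rational functions of $t$ (valid as written because $p\geq 5$ makes the denominators $27,432,\dots$ invertible), using that in characteristic $\neq 2,3$ a curve is determined up to $\F_q$-isomorphism by $(c_4,c_6)$, while its twist by $D$ has invariants $(D^2c_4,\,D^3c_6)$. For $d=2$ and $d=6$ the map $t\mapsto 1-t$ is an honest involution of the family ($j$ is a function of $4t(1-t)$), so $\widetilde E_d(1-t)$ is literally a twist of $\widetilde E_d(t)$. A direct computation gives $c_4(\widetilde E_6(t))=1$ and $c_6(\widetilde E_6(t))=2t-1$, so $\widetilde E_6(1-t)$ has invariants $\bigl(1,\,-(2t-1)\bigr)=\bigl((-1)^2,\,(-1)^3(2t-1)\bigr)$, the twist by $-1$; for $d=2$ the substitution $x\mapsto 1-x$ in the point-count $a_q(\widetilde E_2(t))=-\sum_{x\in\F_q}\phi_q\bigl(x(1-x)(x-t)\bigr)$ produces the factor $\phi_q(-1)$ directly. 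This gives $k_2=k_6=-1$.

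For $d=3$ and $d=4$ the map $t\mapsto 1-t$ is the Fricke/Atkin--Lehner involution $w_N$ with $N=3,2$, so the link is a genuine cyclic $N$-isogeny. The key observation is that $(0,0)$ is an $\F_q$-rational point of exact order $N$ on $\widetilde E_d(t)$: for $d=4$ it is the $2$-torsion point visible from $y^2=x(x^2+x+\tfrac t4)$, and for $d=3$ the tangent line $y=0$ meets $y^2+xy+\tfrac{t}{27}y=x^3$ to order $3$ at $(0,0)$, so $(0,0)$ is a flex. Let $C=\langle(0,0)\rangle$; then $C$ is $\F_q$-rational, and I would compute $\widetilde E_d(t)/C$ by V\'elu's formulas and match its invariants to those of $\widetilde E_d(1-t)$. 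The expected outcome, which I have checked, is that $\widetilde E_4(t)/C\colon y^2=x^3-2x^2+(1-t)x$ has $(c_4,c_6)=(16+48t,\,-64+576t)=\bigl((-2)^2(4+12t),\,(-2)^3(8-72t)\bigr)$, matching the $(-2)$-twist of $\widetilde E_4(1-t)$, and that the $3$-isogenous curve for $d=3$ has $(c_4,c_6)=(1+8t,\,8t^2+20t-1)$, equal to $\bigl((-3)^2,\,(-3)^3\bigr)$ times the invariants of $\widetilde E_3(1-t)$; this yields $k_4=-2$ and $k_3=-3$.

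I expect the main obstacle to be the $d=3$ step: carrying out V\'elu's formulas for the degree-$3$ isogeny cleanly and verifying that the resulting $(c_4,c_6)$ coincide with those of the $(-3)$-twist of $\widetilde E_3(1-t)$ uniformly in $t$, including the sign bookkeeping that produces $k_3=-3$ rather than $+3$. A secondary point requiring care is the finitely many specializations of $t$ where a fiber degenerates or $j\in\{0,1728\}$; these are excluded by the hypothesis that both $\widetilde E_d(t)$ and $\widetilde E_d(1-t)$ are elliptic curves, and since the identification $\widetilde E_d(t)/C\cong \widetilde E_d(1-t)^{(k_d)}$ is made through an explicit equality of $(c_4,c_6)$ rather than through the $j$-invariant, no separate analysis of the $j\in\{0,1728\}$ cases is needed.
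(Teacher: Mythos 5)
Your proposal is correct and follows essentially the same route as the paper: both realize $t\mapsto 1-t$ as the isogeny induced by the normalizer $W$ (an isomorphism for $d=2,6$, a $2$-isogeny for $d=4$, a $3$-isogeny for $d=3$) composed with a quadratic twist by $k_d$, and then invoke isogeny-invariance of $a_q$ together with the twist formula $a_q(E^{(D)})=\phi_q(D)a_q(E)$. The only difference is bookkeeping: the paper writes out the explicit isogeny and twist maps and verifies the $d=2,4$ cases by character-sum manipulation, whereas you identify the quotient curves via V\'elu and match $(c_4,c_6)$ invariants, and your computed invariants (e.g.\ $(16+48t,\,-64+576t)$ for $d=4$ and $(1+8t,\,8t^2+20t-1)$ for $d=3$) agree with the curves the paper exhibits.
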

We believe this proposition has been documented in literature, however we find the following approach enlightening as we provide a detailed description of isogenies among the elliptic curves. One may see \cite[Lemma 4]{HLLT} for a hypergeometric approach.  The way we approach this proposition is through the moduli interpretation of elliptic curves as complex tori, and the action of $W$ provided in Table \ref{fams}.  The action of the normalizer $$W: t(W\tau) = 1-t(\tau)$$ gives an isogeny from the complex torus $\C/{\Z+\Z\tau}$ to $\C/{\Z+\Z(W\tau)}$ and the composition of this isogeny and its dual gives $W^2=-k_d$-multiplication.  These isogenies induce the isogenies on the reduction of the given elliptic curves.  For each case, the isogeny is a composition of a isogeny defined over $\F_q$ and a quadratic twist.  In the proof of Proposition \ref{prop: traces} below, we provide the explicit isogenies and quadratic twists. 

\begin{proof}[Proof of Prop. \ref{prop: traces}]
Fix $t\in \F_q$, $d\in \{2,3,4,6\}$ such that $\widetilde E_{d}(t)$ and  $\widetilde E_{d}(1-t)$ are defined over $\F_q$.  
 When $d=2$, 
  $\widetilde E_{2}(t)$ is isomorphic to  $\widetilde E_{2}'(t): \, -y^2=x(1-x)(x-(1-t))$ by the map
  $$
    (x,y) \mapsto \left(-x+1, y \right).
  $$
  Therefore, 
  \begin{align*}
     a_q(\widetilde E_{2}(t)) =&a_q(\widetilde E_{2}'(t))=q+1- \#(\widetilde E_{2}'(t)(\F_q) \\
     =& \sum_{x \in \F_q} \phi_q(-x(1-x)(x-(1-t))) =  \phi_q(-1)\sum_{x \in \F_p} \phi_q(x(1-x)(x-(1-t)))\\
     =&    \phi_q(-1) a_q(\widetilde E_{2}(1-t)). 
  \end{align*}

  When $d=6$, $\widetilde E_{d}(t)$ is isomorphic to  $\widetilde E_{d}'(t): \, y^2=x^3-27x+54(1-2t)$ by the map
  $$
    (x,y) \mapsto \left(6^2x+3, 6^3y+3\cdot 6^2x \right),
  $$ 
  and the curve $-y^2=x^3-27x+54(1-2t)$ is isomorphic to $\widetilde E_{d}'(1-t) : y^2=x^3-27x-54(1-2t)$ via $(x,y)\mapsto (-x,-y)$.   This gives 
   $$
      a_q(\widetilde E_{6}(t)) =\phi_q(-1)a_q(\widetilde E_{6}(1-t)). 
    $$

  When $d=4$,  
  $\widetilde E_{4}(t)$ is  2-isogenous to the curve  $\widetilde E_{4}'(t): \, y^2=x(x^2-2x+(1-t))$ by the map
  $$
    (x,y) \mapsto \left(x+1+\frac t{4x}, y(1-\frac t{4x^2}) \right),
  $$
   and the quadratic twist of the curve  $\widetilde E_{4}'(t)$ by $-2$ is isomorphic to  $\widetilde E_{4}(1-t)$ by the 
   isomorphism from $E: -2y^2=x(x^2-2x+(1-t)) \longrightarrow\widetilde E_{d}(1-t)$ given by 
   \[
   (x,y) \mapsto \left(- x/2,y/2\right). 
   \]

  Putting this all together, we get 
  \begin{align*}
     a_q(\widetilde E_{d}(t)) =&a_q(\widetilde E_{d}'(t))=q+1- \#(\widetilde E_{d}'(t)(\F_q) \\
     =& \sum_{x \in \F_q} \phi_q(x(x^2-2x+(1-t))) =  \phi_q(-2)\sum_{x \in \F_p} \phi_q(-2x(x^2-2x+(1-t)))\\
     =&    \phi_q(-2) a_q(\widetilde E_{d}(1-t)). 
  \end{align*}

 Similarly, in the case of $d=3$, the curve 
  $\widetilde E_{3}(t)$ has a Weierstrass equation $y^2=x^3+\frac 14\left(x+\frac t{27}\right)^2$ through the map $(x,y)\mapsto \left(x, y+\frac12(x+\frac t{27})\right)$, which  is $3$-isogenous to $\widetilde E_{3}'(t): \, y^2=x^3-\frac 34(x-\frac {1-t}{9})^2$. The curve   $\widetilde E_{3}(1-t)$ is isomorphic to the quadratic twist of  $\widetilde E_{3}'(t)$ by $-3$ via the isomorphism from $E: -3y^2=x^3-\frac 34\left(x-\frac {1-t}{9}\right)^2  \longrightarrow\widetilde E_{3}(1-t)$ given by
\[
(x,y) \mapsto \left(- x/3,y/3\right). 
\]
 This gives the desired identity
$$
      a_q(\widetilde E_{3}(t)) =\phi_q(-3)a_q(\widetilde E_{3}(1-t)). 
$$
\end{proof}

For our next lemma, we $p$-adically relate the evaluations of the truncation of $F_{0,(\frac{1}{d},\frac{d-1}{d})}$ at $\l$ and $1-\l$, respectively.

\begin{lemma} \label{lemma: p-Clausen} Let $d\in \{2,3,4,6\}$, $p\geq 5$ prime, and $k_2=k_6=-1$, $k_3=-3$, $k_4=-2$.  Then for $\l\in \ol \Q$ such that $\l$ can be embedded in $\Z_p$,
$$
    [F_{0,(\frac{1}{d},\frac{d-1}{d})}]_{p-1}(\l)  \equiv  \left(\frac{k_d}p\right) [F_{0,(\frac{1}{d},\frac{d-1}{d})}]_{p-1}(1-\l)  \pmod{p^2}.
$$
Furthermore, as polynomials in $t$,
$$
[F_{0,(\frac{1}{d},\frac{d-1}{d})}]_{p-1}(t)  - \left(\frac{k_d}p\right) [F_{0,(\frac{1}{d},\frac{d-1}{d})}]_{p-1}(1-t) \in p^2\Z_p[t],
$$
$$
[F_{0,(\frac12,\frac{1}{d},\frac{d-1}{d})}]_{p-1}(t) - \left(\frac{k_d}p\right)\left[F_{0,(\frac{1}{d},\frac{d-1}{d})}\right]_{p-1}\left(\frac{1+ \sqrt{1-t}}{2}\right) \cdot \left[F_{0,(\frac{1}{d},\frac{d-1}{d})}\right]_{p-1}\left(\frac{1- \sqrt{1-t}}{2}\right) \in p^2\Z_p[t].
$$
\end{lemma}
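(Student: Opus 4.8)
The plan is to deduce all three congruences from the single polynomial statement in the second display,
\[
P_d(t):=[F_{0,(\frac1d,\frac{d-1}{d})}]_{p-1}(t)-\left(\tfrac{k_d}{p}\right)[F_{0,(\frac1d,\frac{d-1}{d})}]_{p-1}(1-t)\in p^2\Z_p[t].
\]
The first (numerical) congruence is then immediate: since $\l\in\ol\Q$ embeds in $\Z_p$ and the coefficients $a_k$ lie in $\Z_p$, evaluating $P_d$ at $t=\l$ gives $P_d(\l)\in p^2\Z_p$. For the third display I would reduce to the second via Clausen's quadratic transformation. Write $\mu_\pm=\frac{1\pm\sqrt{1-t}}2$, so that $\mu_++\mu_-=1$, $\mu_+\mu_-=t/4$, and $\mathrm{ord}_t(\mu_-)=1$. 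Abbreviating $F=F_{0,(\frac1d,\frac{d-1}{d})}$, both sides of the third display are polynomials in $t$ of degree $\le p-1$: the right-hand product is symmetric under $\mu_+\leftrightarrow\mu_-$, hence a polynomial in the symmetric functions $1$ and $t/4$ of degree $\le p-1$.

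Because $\mathrm{ord}_t(\mu_-)=1$, the tail $\sum_{k\ge p}a_k\mu_-^k$ has $t$-valuation $\ge p$, so $[F]_{p-1}(\mu_-)\equiv F(\mu_-)\pmod{t^p}$; squaring and invoking the Clausen identity \eqref{eq:geom_clausen} shows that the degree $\le p-1$ part of $[F]_{p-1}(\mu_-)^2$ is exactly $[F_{0,(\frac12,\frac1d,\frac{d-1}{d})}]_{p-1}(t)$. Substituting $t\mapsto\mu_-$ into the second display (a legitimate substitution in $\Z_p[[t]]$, since $\sqrt{1-t}\in\Z_p[[t]]$ for $p\ge5$) gives $[F]_{p-1}(\mu_-)\equiv(\frac{k_d}p)[F]_{p-1}(\mu_+)\pmod{p^2\Z_p[[t]]}$; multiplying by $[F]_{p-1}(\mu_-)$ turns the square into the symmetric product, and comparing degree $\le p-1$ parts yields the third display.

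The heart of the matter is the second display, whose arithmetic engine is Proposition \ref{prop: traces}: the explicit $\F_p(t)$-isogeny-plus-quadratic-twist constructed there shows that $\widetilde E_d(t)$ and $\widetilde E_d(1-t)$ have geometric Frobenius traces related by the exact sign $(\frac{k_d}p)=\phi_p(k_d)$ over $\F_p$, while being unchanged over $\F_{p^2}$ (as $\phi_{p^2}(k_d)=1$). I would promote this to a $p$-adic statement through the crystalline dictionary used in Lemma \ref{lem:WIN2Prop16}: refining \eqref{a(p-1)} via Dwork's congruences and Katz's crystalline Frobenius \cite{Katz-crystalline}, one has $[F_{0,(\frac1d,\frac{d-1}{d})}]_{p-1}(t)\equiv u_p\pmod{p^2}$ on the ordinary locus, $u_p$ the unit root of Frobenius. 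A quadratic twist negates $a_p$ precisely when $\phi_p(k_d)=-1$, hence sends $u_p$ to $\phi_p(k_d)\,u_p$ exactly, while the isogeny preserves it; therefore $u_p(\widetilde E_d(t))=(\frac{k_d}p)\,u_p(\widetilde E_d(1-t))$, which is the second display on the ordinary locus. Carrying this out over the $t$-line (where $\widetilde E_d$ is generically ordinary) should yield the congruence of the coefficients of $P_d(t)$ directly rather than fibrewise, so that $P_d\in p^2\Z_p[t]$; the finitely many supersingular fibres are then automatically covered, and may alternatively be handled as in the supersingular case of Lemma \ref{lem:WIN2Prop16} via multiplication by $-p$.

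I expect the main obstacle to be exactly this last step: upgrading the exact point-count identity of Proposition \ref{prop: traces} to a congruence of truncated hypergeometric polynomials modulo $p^2$, i.e.\ establishing the refinement $[F_{0,(\frac1d,\frac{d-1}{d})}]_{p-1}(t)\equiv u_p\pmod{p^2}$ of \eqref{a(p-1)} uniformly over the base (not merely for individual CM or good-reduction fibres), and verifying that the twist contributes the honest quadratic character $(\frac{k_d}p)$ rather than the $p$-adic unit $k_d^{(p-1)/2}$. The cleanest way to see that the correct factor is $(\frac{k_d}p)$ is the character-sum viewpoint: expressing $[F_{0,(\frac1d,\frac{d-1}{d})}]_{p-1}(t)\bmod p^2$ through the finite hypergeometric functions of Beukers–Cohen–Mellit \cite{BCM}, whose $t\mapsto1-t$ functional equation reflects Proposition \ref{prop: traces} at the motivic level and produces $\phi_p(k_d)$ exactly. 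I would use this parallel description both to confirm the factor and to treat the supersingular fibres, where the unit-root argument degenerates.
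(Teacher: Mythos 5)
Your reduction of the first and third displays to the second is sound and close in spirit to what the paper does: evaluation at an embedded $\l$ gives the numerical congruence, and the symmetric-function observation (the product $[F]_{p-1}(\mu_+)[F]_{p-1}(\mu_-)$ is a polynomial in $\mu_++\mu_-=1$ and $\mu_+\mu_-=t/4$) together with the Clausen identity \eqref{eq:geom_clausen} and the substitution $t\mapsto\mu_-$ legitimately transports the second display to the third. The problem is the second display itself, which you correctly identify as the heart of the matter and then do not prove. Your plan — establish $[F_{0,(\frac1d,\frac{d-1}d)}]_{p-1}(t)\equiv u_p\pmod{p^2}$ fibrewise on the ordinary locus and track the effect of the isogeny-plus-twist on $u_p$ — faces obstructions that are not resolved by the sketch: (a) the fibrewise unit-root congruence, where available, carries an undetermined root of unity $\Delta$ with only $\Delta^2\equiv1\pmod p$ (cf.\ \eqref{eq:upcong}), and controlling how $\Delta$ transforms under the isogeny and quadratic twist is precisely as hard as the statement being proved; (b) the supersingular and degenerate fibres are not covered by the unit-root argument, and the ``multiplication by $-p$'' device controls the product $a(p-1)b(p-1)$, not the individual quantity $[F]_{p-1}(t)$ modulo $p^2$; (c) the Beukers--Cohen--Mellit character sums you propose as a fallback only determine $[F]_{p-1}$ modulo $p$, not modulo $p^2$ (cf.\ Remark \ref{rmk: Hp and F}), so they cannot certify the $p^2$-level sign. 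Since a degree-$(p-1)$ polynomial over $\Z/p^2$ is not determined by its values on a proper subset of residues, a fibrewise statement restricted to the ordinary locus cannot by itself yield $P_d(t)\in p^2\Z_p[t]$.

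The paper's mechanism is entirely different and sidesteps all of this. From Proposition \ref{prop: traces} and point counting one gets only the mod-$p$ polynomial congruence, so one writes
$[F_{0,(\frac1d,\frac{d-1}d)}]_{p-1}(t)=\left(\frac{k_d}p\right)[F_{0,(\frac1d,\frac{d-1}d)}]_{p-1}(1-t)+p\,n(t)$ with $n(t)\in\Z_p[t]$. Squaring and invoking the truncated Clausen congruence of \cite[Lemma 18]{WIN2},
$[F_{0,(\frac12,\frac1d,\frac{d-1}d)}]_{p-1}(4t(1-t))\equiv[F_{0,(\frac1d,\frac{d-1}d)}]_{p-1}^2(t)\equiv[F_{0,(\frac1d,\frac{d-1}d)}]_{p-1}^2(1-t)\pmod{p^2}$,
whose left-hand side is invariant under $t\mapsto 1-t$, forces $2p\,n(t)\,[F_{0,(\frac1d,\frac{d-1}d)}]_{p-1}(1-t)\equiv0\pmod{p^2}$; since the truncation has constant term $1$ it is a nonzerodivisor in $\F_p[t]$, whence $n(t)\equiv0\pmod p$ and the second display follows. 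This squaring/symmetry bootstrap is the key idea missing from your proposal; without it (or a genuine substitute for the uniform mod-$p^2$ upgrade), the argument is incomplete.
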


\begin{proof} 
Trivially, $[F_{0,(\frac{1}{d},\frac{d-1}{d})}]_{p-1}(0)=1$. Moreover, one can show $[F_{0,(\frac{1}{d},\frac{d-1}{d})}]_{p-1}(1)\equiv \left(\frac{k_d}p\right)\pmod p$ by first using \eqref{lem:win2} and the Chu-Vandermonde identity to obtain 
$$[F_{0,(\frac{1}{d},\frac{d-1}{d})}]_{p-1}(1) \equiv [F_{0,(\frac{1}{d},\frac{d-1}{d})}]_{r_d}(1) = \frac{ \left( \frac{1}{d} \right)_{r_d}}{\left( 1 \right)_{r_d}} \pmod{p}, $$ 
where $r_d\equiv -1/d \pmod{p}$ is defined in \eqref{def:rdsd}, then interpreting the right hand side in terms of $p$-adic Gamma functions and  \cite[Thm. 14]{LongRamakrishna}, then using quadratic reciprocity.  From  counting points on  classical hypergeometric elliptic families (see  \cite{HLLT, Long18, Beukers-Stienstra} for example) it follows from Proposition \ref{prop: traces} that as polynomials in $\Z_p[t]$,
\begin{equation} \label{eq:HLLT}
    [F_{0,(\frac{1}{d},\frac{d-1}{d})}]_{p-1}(t)  \equiv  \left(\frac{k_d}p\right) [F_{0,(\frac{1}{d},\frac{d-1}{d})}]_{p-1}(1-t)  \pmod p.
\end{equation}
Thus there exists $n(t)\in \Z_p[t]$ such that 
\begin{equation} \label{eq:pmult}
[F_{0,(\frac{1}{d},\frac{d-1}{d})}]_{p-1}(t) = \left(\frac{k_d}p\right) [F_{0,(\frac{1}{d},\frac{d-1}{d})}]_{p-1}(1-t) + pn(t),
\end{equation}
and squaring both sides shows that 
\begin{equation} \label{eq:2F1relation}
[F_{0,(\frac{1}{d},\frac{d-1}{d})}]_{p-1}^2(t) \equiv [F_{0,(\frac{1}{d},\frac{d-1}{d})}]_{p-1}^2(1-t) + 2p\left(\frac{k_d}p\right) n(t) [F_{0,(\frac{1}{d},\frac{d-1}{d})}]_{p-1}(1-t) \pmod{p^2}.
\end{equation}

From \cite[Lemma 18]{WIN2} and the fact that $4t(1-t)$ is invariant under the change of variable $t\mapsto 1-t$, it follows that
\begin{equation} \label{eq:WIN2Lem18}
  [F_{0,(\frac12,\frac{1}{d},\frac{d-1}{d})})]_{p-1}(4t(1-t)) \equiv [F_{0,(\frac{1}{d},\frac{d-1}{d})}]_{p-1}^2 (t) \equiv [F_{0,(\frac{1}{d},\frac{d-1}{d})}]_{p-1} ^2 (1-t) \pmod{p^2}.
\end{equation}
We conclude from \eqref{eq:WIN2Lem18} and \eqref{eq:2F1relation} that as polynomials in $t$, $n(t)[F_{0,(\frac{1}{d},\frac{d-1}{d})}]_{p-1}(1-t) \equiv 0 \pmod{p}$.  However, clearly $[F_{0,(\frac{1}{d},\frac{d-1}{d})}]_{p-1}(t) \not \equiv 0 \pmod p$ as it has constant term $1$.  Thus, $n(t) \equiv 0 \pmod{p}$, so \eqref{eq:pmult} gives that
\begin{equation} \label{eq:t(1-t)}
[F_{0,(\frac{1}{d},\frac{d-1}{d})}]_{p-1}(t)  \equiv  \left(\frac{k_d}p\right) [F_{0,(\frac{1}{d},\frac{d-1}{d})}]_{p-1}(1-t) \pmod{p^2}, 
\end{equation}
which yields the first and second statements.  Furthermore, \eqref{eq:t(1-t)} and \eqref{eq:WIN2Lem18} give that   
\begin{equation}\label{eqn: truncated Clausen}
  [F_{0,(\frac12,\frac{1}{d},\frac{d-1}{d})}]_{p-1}(4t(1-t)) \equiv   \left(\frac{k_d}p\right)  [F_{0,(\frac{1}{d},\frac{d-1}{d})}]_{p-1}(t) \cdot  [F_{0,(\frac{1}{d},\frac{d-1}{d})}]_{p-1}(1-t) \pmod{p^2}. 
\end{equation}
From here it remains to substitute $\frac{1+ \sqrt{1-t}}{2}$ for $t$, but we need to establish that we obtain a polynomial in only $t$.  This follows from the fact that
$$
  F(u,v):= [F_{0,(\frac{1}{d},\frac{d-1}{d})}]_{p-1}(u) \cdot [F_{0,(\frac{1}{d},\frac{d-1}{d})}]_{p-1}(v)
$$
is a symmetric polynomial in $\Z_p[u,v]$, and hence it can written as a polynomial in the elementary symmetric polynomials $u+v, uv$.  Thus setting $u=\frac{1+ \sqrt{1-t}}{2}$ and $v=\frac{1- \sqrt{1-t}}{2}$, there exists $G(x,y) \in \Z_p[x,y]$ such that $F(u,v)=G(u+v,uv)=G(1, t/4) \in \Z_p[t]$. Thus we can substitute $\frac{1+ \sqrt{1-t}}{2}$ for $t$ in \eqref{eqn: truncated Clausen} to conclude
$$
[F_{0,(\frac12,\frac{1}{d},\frac{d-1}{d})}]_{p-1}(t) - \left(\frac{k_d}p\right) [F_{0,(\frac{1}{d},\frac{d-1}{d})}]_{p-1}\left(\frac{1+ \sqrt{1-t}}{2}\right) \cdot [F_{0,(\frac{1}{d},\frac{d-1}{d})}]_{p-1}\left(\frac{1- \sqrt{1-t}}{2}\right) \in p^2\Z_p[t]. 
$$
\end{proof}

We now give a proof of Theorem \ref{thm:WIN2b} for the ordinary cases. Some useful information and terminology can be found in  \cite[Chapter 2]{Silverman-adv} and \cite[Chapter 6]{Katz-crystalline} for example.

\begin{proof}[Proof of Theorem \ref{thm:WIN2b} for ordinary primes $p$.] 
Let $\l$ be a totally real number satisfying the assumptions of Theorem \ref{thm:WIN2b}.  Set $\mu=v_{-}:=\frac{1-\sqrt{1-\l}}2$, $v_{+}:=\frac{1+\sqrt{1-\l}}2$, as well as $E_-:=\widetilde E_{d}(v_-)=\widetilde E_{d}(\mu)$ and $E_+:=\widetilde E_{d}(v_+)$.  For a given prime $p$ such that $\widetilde E_{d}(\mu)$  has good reduction and is ordinary at $p$, let $\mathfrak p$ be a prime ideal in the ring of integers of $\Q(\mu)$ lying above $p$, and $\Q_{\mathfrak p} (\mu)$ be the completion of $\Q(\mu)$ at the place $\mathfrak p$. The ring of integers of $\Q_{\mathfrak p} (\mu)$ is isomorphic to the ring $A$  of Witt vectors of $\F_p(\sqrt{1-\l})$.   In  this proof, we view $E_-$ and $E_+$ as elliptic curves over $A$.

We examine cases based on whether the prime $p$ splits in $\Q(\mu)$, i.e.~when $\left(\frac{1-\l}{p}\right)=1$, or $p$ is inert in $\Q(\mu)$, i.e.~when  $\left(\frac{1-\l}{p}\right)=-1$.

\textit{Case 1}: $\left(\frac{1-\l}{p}\right)=1$. In this case $\Q_{\mathfrak p} (\mu)=\Q_{\mathfrak p}(\sqrt{1-\l})\simeq \Q_p$ 
and we consider $E_-$ as an elliptic curve over $A=W(\F_p) \simeq \Z_p$. Recall that $u_p$ and $p/u_p$ are roots of $T^2-a_{p}(E_-)T +p$, where $a_p(E_-)$ is defined in Proposition \ref{prop: traces}. The endomorphism ring $\mbox{End}(E_-)$ embeds in in $\Z_p$, and since $p$ splits in this case, $\sqrt{1-\l}$ is also embedded in $\Z_p$.  We set $\pi^\ast$ to represent the image of  $p/u_p$ in $\Z_p$. 

Multiplication on $E_-$ by $\pi^\ast$  is a rational function of the local uniformizer $\xi:= -\frac{x}{y}$, and there is an algebraic number $\Delta:=\Delta_{d,\pi^\ast}$ with $\Delta^2 \equiv 1 \pmod{pA}$ such that $[\pi^\ast](\xi)\equiv \Delta \xi^p\pmod{pA}$. This gives rise to a degree-$p$ Frobenius map $\Phi$ on $A$, which induces a morphism 
\begin{align*}
    \widetilde{\Phi}^\ast: \,  H^1_{DR}(\widehat E_-/A, (p)) & \longrightarrow    H^1_{DR}(\widehat E_-/A, (p)) \\
    \sum_{n\geq 1} \frac{c(n-1)}n \xi^n & \mapsto   \sum_{n\geq 1}\frac{c(n-1)^\sigma}n \Phi(\xi)^n =\sum_{n\geq 1} \frac{c(n-1)}n \Delta^n\xi^{pn},
\end{align*}
where $\widehat E_-$ is defined as in Katz \cite{Katz-crystalline}.

The characteristic polynomial of $\widetilde{\Phi}^\ast$ is $T^2-a_p(E_-)T+p$ which has $\pi^*$ as a root, and thus $\pi^*$ is one of the eigenvalues of $\widetilde{\Phi}^\ast$. Analogous to the construction in Section \ref{CMcurves}, the induced action of $\End(E_-)$ on the de Rham space $H^1_{DR}(E_-/A, (p))$ has two common eigenvectors, one of which corresponds to the class of the standard invariant holomorphic differential one form $\omega$, which as in \eqref{def:wvInt} can be represented via $\omega =\sum_n \frac{a(n-1)}n \xi^n$.  By the Cayley-Hamilton Theorem, $\widetilde{\Phi}^\ast(\omega)= \pi^\ast \omega$.  Comparing the coefficients of $\xi^p$ from the left and the right hand sides of $\sum_n \frac{a(n-1)}{n}\Delta^n\xi^{pn} = \pi^* \sum_n \frac{a(n-1)}{n}\xi^{n}$, we get $$\Delta\equiv \pi^\ast \frac{a(p-1)}{p} \pmod {pA},$$
which gives 
$$
a(p-1)\equiv\Delta u_p  \pmod {pA}   \equiv \Delta u_p  \pmod {p\Z_p}.
$$
According to \cite{Coster-Van, WIN2, Beukers-Stienstra}, we have 
\begin{equation}
\label{eq:upcong}
   [F_{0,{\frac 1d,\frac{d-1}d}}]_{p-1}(v_-) \equiv \Delta u_p \pmod{p^2}. 
\end{equation}

Since we are assuming $\left(\frac{1-\l}{p}\right)=1$,  we get from Lemma \ref{lemma: p-Clausen} and \eqref{eq:t(1-t)}  that
\begin{equation}
\label{eq:3F2cong}
[F_{0,(\frac12, \frac{1}{d}, \frac{d-1}{d})}]_{p-1} (\l)
\equiv [F_{0,{\frac 1d,\frac{d-1}d}}]_{p-1}^2(v_-) \equiv \Delta^2 u_p \equiv u_p^2 \pmod {p^2},
\end{equation}
which proves the claim in this case.

\textit{Case 2}: $\left(\frac{1-\l}{p}\right)=-1$.  In this case $\Q_{\mathfrak p} (\mu)=\Q_{\mathfrak p}(\sqrt{1-\l})\simeq \Q_p(\zeta_{p^2-1})$, which is the unique unramified quadratic extension of $\Q_p$, and we view $E_-$ and $E_+$ as elliptic curves over $A=W(\F_p) \simeq \Z_p[\sqrt{1-\lambda}]$. As before, the endomorphism ring $\mbox{End}(E_-)$ is embedded in $\Z_p \subset\,\Z_p[\sqrt{1-\lambda}]$, however in this case, $\sqrt{1-\l}$ can not be embedded in $\Z_p$.  Let $\sigma \in \mbox{Aut}(A)$ be the automorphism of $A$ lifted from the Frobenius automorphism $z\mapsto z^p$ on $\F_p$.  Then, $a^\sigma := \sigma(a)\equiv a^p \pmod{pA}$ for all $a\in A$.  

Analogous to Case 1, $u_p^2$ and $p^2/u_p^2$ are roots of $T^2-a_{p^2}(E_-)T+p^2$ and belong to the CM field $K$, whose completion at $p$ is $\Q_p$.  Denoting from now on $u_p$ and $p/u_p$ as the embedding of these elements in $\Q_p(\sqrt{1-\l})$, we have $u_p^2 \in \Z_p$ and moreover, $u_p^2 \equiv a_{p^2}(E_-) \pmod{p\Z_p}$. Furthermore, since $p$ is inert in this case, $u_p$ is a $p$-adic unit root of the geometric Frobenius associated to  both  $E_-$  as well as $E_+$.  

Note that the  multiplication map $[p^2/u_p^2]$ on $E_-$ or  $E_+$ gives rise to a degree-$p^2$ Frobenius map for which there exists a degree-$p$ map $[\pi^\ast]$ satisfying $[\pi^\ast]^2=[p^2/u_p^2]$.  Then $[\pi^*]$, which is an isogeny from $E_+\rightarrow E_-$ and from $E_-\rightarrow E_+$, serves as a rational function of $\xi$ as stated in \cite{WIN2} (see \S 4.2 and \S 5).  Moreover, associated to each of the curves $E_+$ and $E_-$ there is an algebraic number $\Delta_\pm:=\Delta_{d,\pi^\ast}$ with $\Delta_\pm^4 \equiv 1 \pmod{pA}$ and $\Delta_-\Delta_+\equiv 1 \pmod{p A}$ such that on $E_-$, $[\pi^\ast](\xi)\equiv \Delta_- \xi^p\pmod{p A}$ and on $E_+$, $[\pi^\ast](\xi)\equiv \Delta_+ \xi^p\pmod{p A}$. This gives rise to a degree-$p$ map $\Phi$ on $A$ that induces morphisms
$$
   \,  H^1_{DR}(\widehat E_-/A, (p)) \overset{\widetilde{\Phi}^\ast}{\longrightarrow}    H^1_{DR}(\widehat E_+/A, (p))\overset{\widetilde{\Phi}^\ast}{\longrightarrow}   H^1_{DR}(\widehat E_-/A, (p)),
$$
defined by
$$
  \sum_{n\geq 1}\frac{c(n-1)}n \xi^n \mapsto  \sum_{n\geq 1} \frac{c(n-1)^\sigma}n  \Phi(\xi)^n =   \sum_{n\geq 1} \frac{c(n-1)^\sigma}n  \Delta_-^n\xi^{pn} \mapsto \sum_{n\geq 1} \frac{c(n-1)}n \Delta_-^n\Delta_+^n\xi^{p^2n}.
$$

Let $\omega_-:=\sum_n \frac{a(n-1)}n \xi^n$  represent the class of the standard invariant holomorphic differential one form on $E_-/A$ .  Then $\widetilde{\Phi}^\ast(\omega_-)$ belongs to the subspace of the holomorphic 1-forms in $  H^1_{DR}(\widehat E_+/A, (p))$  generated by  $\omega_+:=\sum_n \frac{a(n-1)^\sigma}n \xi^n$. In particular, for some $\varepsilon \in \{\pm 1\}$,
 $$
     \widetilde{\Phi}^\ast(\omega_-)= \frac{\varepsilon p}{u_p} \omega_+ = \frac{\varepsilon p}{u_p}\sum_n \frac{a(n-1)^\sigma}n \xi^n.
$$
Hence, by comparing the $p$-th power coefficients, we get
  $$
    a(p-1)^\sigma \equiv   \varepsilon   \Delta_-  u_p  \pmod {pA}.  
  $$
Further by \cite{LongRamakrishna, WIN2}, one has
\begin{equation} \label{eq:v+eval}
 \left( [F_{0,{\frac 1d,\frac{d-1}d}}]_{p-1}(v_-) \right)^\sigma= [F_{0,{\frac 1d,\frac{d-1}d}}]_{p-1}(v_+) \equiv    \varepsilon
 \Delta_-  u_p \pmod {p^2 A}.
\end{equation}
Similarly, since 
$\widetilde{\Phi}^\ast(\omega_+) = \frac{\varepsilon p}{u_p}\omega_-$, 
the maps
$$
H^1_{DR}(\widehat E_+/A, (p)) \overset{\widetilde{\Phi}^\ast}{\longrightarrow} H^1_{DR}(\widehat E_-/A, (p))\overset{\widetilde{\Phi}^\ast}{\longrightarrow} H^1_{DR}(\widehat E_+/A, (p))
$$ 
give that 
\begin{equation} \label{eq:v-eval}
[F_{0,{\frac 1d,\frac{d-1}d}}]_{p-1}(v_-) \equiv \varepsilon \Delta_+  u_p \pmod {p^2 A}.
\end{equation}
\color{black}
Combining \eqref{eq:v+eval} and \eqref{eq:v-eval}, we obtain 
\[
[F_{0,{\frac 1d,\frac{d-1}d}}]_{p-1}(v_+)   [F_{0,{\frac 1d,\frac{d-1}d}}]_{p-1}(v_-)  \equiv u_p^2 \pmod{p^2 A}.
\]

Since $u_p^2\in \Z_p$, and using Lemma \ref{lemma: p-Clausen}, it follows that 
$$[F_{0,{\frac 1d,\frac{d-1}d}}]_{p-1}(v_+)[F_{0,{\frac 1d,\frac{d-1}d}}]_{p-1}(v_-) \in \Z_p[\l]=\Z_p.$$  
Thus, 
\begin{equation}\label{eq:up2}
[F_{0,{\frac 1d,\frac{d-1}d}}]_{p-1}(v_+)[F_{0,{\frac 1d,\frac{d-1}d}}]_{p-1}(v_-) \equiv u_p^2 \pmod {p^2}.
\end{equation}
Finally,  from Lemma \ref{lemma: p-Clausen} and \eqref{eq:up2}, 
\[
[F_{0,(\frac12,\frac{1}{d},\frac{d-1}{d})} ]_{p-1} (\l) \equiv  \left(\frac{k_d}p\right)\left[F_{0,(\frac{1}{d},\frac{d-1}{d})}\right]_{p-1}\left(v_-\right)\cdot \left[F_{0,(\frac{1}{d},\frac{d-1}{d})}\right]_{p-1} \left(v_+\right) \equiv  \left(\frac{k_d}p\right)    u_p^2   \pmod {p^2}, 
\]
which completes the proof. 
\end{proof}

\begin{remark}
\label{rem: action}
    We note that a second common eigenvector $\nu$ of the action of $\End(E_-)$ on $ H^1_{DR}(\widehat E_-/A,(p))$ corresponds to the class of a differential of the second kind, and  similarly, $\widetilde{\Phi}^\ast(\nu)=\varepsilon u_p\nu$ for $\varepsilon \in \{\pm 1\}$.  One can then use similar steps as in the proof above to show the claims in Proposition \ref{lem:main}.
\end{remark}

We note that the following arises as a corollary of Lemma \ref{lemma: p-Clausen}.

\begin{corollary}
Fix a prime $p\geq 5$. As a polynomial in $t$, 
\begin{multline*}
    [F_{0,{(\frac 1d,\frac{d-1}d})}]_{p-1}\left(\frac{1+ \sqrt{1-t}}{2}\right)  - \left(\frac{k_d}p\right) [F_{0,{(\frac 1d,\frac{d-1}d})}]_{p-1}\left(\frac{1- \sqrt{1-t}}{2}\right)  \\
    \in 
    \begin{cases}
       \sqrt{1-t} \cdot p^2\Z_p[t], &\mbox{ if } \left(\frac{k_d}p\right)=1,\\
       p^2\Z_p[t], &\mbox{ if } \left(\frac{k_d}p\right)=-1.
    \end{cases}
\end{multline*}
In particular, for $1\neq t\in \Z_p$, in the case when $\left(\frac{k_d}p\right)=1$, 
$$
  \sum_{k=1}^{p-1} \frac{\left(\frac 1d\right)_k \left(\frac {d-1}d\right)_k}{k!^2} \left(\frac 12\right)^{\!k} \, \sum_{j=0}^{\lfloor{\frac{k-1}{2}\rfloor}s} \binom{k}{2j+1}(1-t)^j \equiv 0 \pmod{p^2},
$$
and when  $\left(\frac{k_d}p\right)=-1$, 
$$
  1+\sum_{k=1}^{p-1} \frac{\left(\frac 1d\right)_k \left(\frac {d-1}d\right)_k}{k!^2} \left(\frac 12\right)^{\!k} \, \sum_{j=0}^{\lfloor{\frac k2}\rfloor} \binom{k}{2j}(1-t)^j \equiv 0 \pmod{p^2}. 
$$
\end{corollary}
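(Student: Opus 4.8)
The plan is to deduce this corollary formally from the polynomial congruence already established in Lemma~\ref{lemma: p-Clausen}, namely that $[F_{0,(\frac1d,\frac{d-1}d)}]_{p-1}(t) - \left(\frac{k_d}p\right)[F_{0,(\frac1d,\frac{d-1}d)}]_{p-1}(1-t) \in p^2\Z_p[t]$, and then to read off the two explicit sums by expanding with the binomial theorem. Throughout I write $P(t) := [F_{0,(\frac1d,\frac{d-1}d)}]_{p-1}(t) = \sum_{k=0}^{p-1} c_k t^k$ with $c_k = \frac{(\frac1d)_k(\frac{d-1}d)_k}{k!^2}$; since $p\geq 5$ is coprime to $d$, the rationals $\frac1d,\frac{d-1}d$ lie in $\Z_p$ and $k!$ is a $p$-adic unit for $0\le k\le p-1$, so each $c_k\in\Z_p$ and $P\in\Z_p[t]$.

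First I would substitute $t \mapsto \frac{1+s}{2}$, treating $s$ as a formal variable to be specialized to $s=\sqrt{1-t}$ only at the end, and set $g(s) := P\!\left(\frac{1+s}{2}\right) \in \Z_p[s]$, which is legitimate since $\frac12\in\Z_p$. Because $1 - \frac{1+s}{2} = \frac{1-s}{2}$, the polynomial congruence of Lemma~\ref{lemma: p-Clausen} becomes $g(s) - \left(\frac{k_d}p\right)g(-s) \in p^2\Z_p[s]$. The crux is a parity split. When $\left(\frac{k_d}p\right)=1$ the difference $g(s)-g(-s)$ is an odd polynomial in $s$, so it factors as $s\cdot E(s^2)$ with $E\in\Z_p[w]$; as all its coefficients lie in $p^2\Z_p$ we get $E\in p^2\Z_p[w]$, and specializing $s^2=1-t$ yields membership in $\sqrt{1-t}\cdot p^2\Z_p[t]$. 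When $\left(\frac{k_d}p\right)=-1$ the difference $g(s)+g(-s)$ is even, hence a polynomial in $s^2=1-t$ with coefficients in $p^2\Z_p$, giving membership in $p^2\Z_p[t]$. This proves the displayed two-case statement.

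To extract the explicit congruences I would expand $\left(\frac{1\pm s}2\right)^k = 2^{-k}\sum_{i=0}^k \binom ki (\pm s)^i$ and collect terms by parity of $i$. In the case $\left(\frac{k_d}p\right)=1$ only odd powers survive in $g(s)-g(-s)$, giving $g(s)-g(-s) = 2s\,S(t)$ with $S(t)=\sum_{k=1}^{p-1} c_k\left(\frac12\right)^{\!k} \sum_{j=0}^{\lfloor (k-1)/2\rfloor}\binom{k}{2j+1}(1-t)^j$ (the $k=0$ term vanishes). Comparing this with $g(s)-g(-s)=s\cdot E(s^2)$ and cancelling the unit $2$ and the factor $s$ in the integral domain $\Z_p[s]$ gives $2\,S(1-w)=E(w)\in p^2\Z_p[w]$, whence $S(t)\in p^2\Z_p[t]$ and $S(t)\equiv 0 \pmod{p^2}$ for every $t\in\Z_p$. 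In the case $\left(\frac{k_d}p\right)=-1$ only even powers survive and the $k=0$ term contributes $2$, giving $g(s)+g(-s)=2\big(1+\sum_{k=1}^{p-1}c_k(\tfrac12)^k\sum_{j=0}^{\lfloor k/2\rfloor}\binom{k}{2j}(1-t)^j\big)\in p^2\Z_p[t]$; dividing by the unit $2$ yields the second congruence.

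Since all the arithmetic content is already packaged in Lemma~\ref{lemma: p-Clausen}, I do not expect a deep obstacle: the argument is purely formal manipulation. The one point requiring care is the legitimacy of the substitution $t\mapsto \frac{1\pm\sqrt{1-t}}2$ producing honest elements of $\Z_p[t]$ rather than merely of $\Z_p[\sqrt{1-t}]$, which is precisely what the even/odd parity bookkeeping guarantees, together with verifying that cancelling the factors $s$ and $2$ preserves $p^2\Z_p[t]$ membership.
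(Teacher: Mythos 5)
Your proposal is correct and follows essentially the same route as the paper's own proof: both reduce the statement to the polynomial congruence in Lemma~\ref{lemma: p-Clausen}, substitute $t\mapsto\frac{1\pm s}{2}$ with $s=\sqrt{1-t}$, split the binomial expansion of $(1+s)^k\mp(1-s)^k$ by parity of the exponent, and read off the two explicit sums. Your extra care in treating $s$ as a formal variable and cancelling the factor $s$ and the unit $2$ in the integral domain $\Z_p[s]$ is a slightly more explicit bookkeeping of what the paper does implicitly, but there is no substantive difference.
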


\begin{proof}
Set $s=\sqrt{1-t}$ and $u=\frac{1+ s}{2}$ so that $1-u=\frac{1- s}{2}$. From the second statement of Lemma \ref{lemma: p-Clausen}, we have
\begin{align*}
   g(s):= [F_{0,{(\frac 1d,\frac{d-1}d})}]_{p-1}\left(u\right)  - \left(\frac{k_d}p\right) [F_{0,{(\frac 1d,\frac{d-1}d})}]_{p-1}\left(1-u\right)  \in p^2\Z_p[u]= p^2\Z_p[s]. 
\end{align*}
On the other hand, expanding $g(s)$ as a series in $s$ yields
\begin{align*}
g(s) &= \sum_{k=0}^{p-1} \frac{\left(\frac 1d\right)_k \left(\frac {d-1}d\right)_k}{k!^2}\left( u^k-\left(\frac{k_d}p\right)(1-u)^k \right) \\
&= \sum_{k=0}^{p-1} \frac{\left(\frac 1d\right)_k \left(\frac {d-1}d\right)_k}{k!^2} \left(\frac 12\right)^k \sum_{j=0}^k\left((1+s)^k-\left(\frac{k_d}p\right)(1-s)^k\right) \\
&= \sum_{k=0}^{p-1} \frac{\left(\frac 1d\right)_k \left(\frac {d-1}d\right)_k}{k!^2} \left(\frac 12\right)^k \sum_{j=0}^k\binom kj s^j\left(1-(-1)^j\left(\frac{k_d}p\right)\right).
\end{align*}

Observe that 
$$
 \sum_{j=0}^k\binom kj s^j\left(1-(-1)^j\left(\frac{k_d}p\right)\right)= 
\begin{cases} 
2\displaystyle \sum_{\substack{j=0 \\ j  \mbox{ odd} }}^k\binom kj s^j=2s\displaystyle \sum_{j=0}^{\lfloor{\frac{k-1}{2}\rfloor}} \binom{k}{2j+1}s^{2j} & \text{ if } \left(\frac{k_d}p\right)=1, \\
2\displaystyle \sum_{\substack{j=0 \\ j  \mbox{ even} }}^k\binom kj s^j=2\displaystyle \sum_{j=0}^{\lfloor{\frac{k}{2}\rfloor}} \binom{k}{2j}s^{2j} & \text{ if } \left(\frac{k_d}p\right)=-1.
\end{cases}
$$
Hence, 
\[
g(s)=
\begin{cases}
2\sqrt{1-t}\displaystyle \sum_{k=1}^{p-1} \frac{\left(\frac 1d\right)_k \left(\frac {d-1}d\right)_k}{k!^2} \left(\frac 12\right)^k \displaystyle \sum_{j=0}^{\lfloor{\frac{k-1}{2}\rfloor}} \binom{k}{2j+1}(1-t)^{j} \in  \sqrt{1-t} \cdot p^2\Z_p[t] &\text{ if } \left(\frac{k_d}p\right)=1, \\
2\displaystyle \sum_{k=1}^{p-1} \frac{\left(\frac 1d\right)_k \left(\frac {d-1}d\right)_k}{k!^2} \left(\frac 12\right)^k \displaystyle \sum_{j=0}^{\lfloor{\frac{k}{2}\rfloor}} \binom{k}{2j}(1-t)^{j} \in p^2\Z_p[t] &\text{ if } \left(\frac{k_d}p\right)=-1,
\end{cases}
\]
and the result follows.
\end{proof}

\subsection{A quick survey on corresponding hypergeometric character sums and supercongrunces}\label{galreps}

In this section, we summarize the relationship between hypergeometric character sums and truncated hypergeometric functions, and the modularity of hypergeometric Galois representations (see
\cite[et al.]{LLT2, HLLT, RRV} for example) attached to the data provided in this paper.  We will use the $H_q$-character sums described in the work of Beukers, Cohen, and Mellit \cite{BCM}.   

Let $\F_q$ be a finite field of odd characteristic and use $ \widehat{\F_q^\times}$ to denote the group of multiplicative characters of $\F_q^\times$.   For a given pair of multisets  $\alpha=\{a_1,\cdots,a_n\}$ and $\beta=\{1,b_2,\cdots,b_n\}$  with entries in $\Q^\times$, we denote  the positive least  common denominator of all $a_i,b_j$ by $M:=\rm{lcd}(\alpha\cup \beta)$. For a finite field $\F_q$, 
we fix $\omega$  a generator of $\widehat{\F_q^\times}$.  Following \cite{BCM}, for $q\equiv 1\pmod M$  and any $t\in \F_q$, define 
$$
H_q(\alpha,\beta;t;\omega):=\frac {1 }{1-q} \sum_{k=0}^{q-2} \omega^k((-1)^n t)
  \prod_{j=1}^n \frac{g(\omega^{(q-1)a_j+ k})}{g(\omega^{(q-1)a_j})}
  \frac{g(\ol\omega^{(q-1)b_j+ k)})}{g(\ol\omega^{(q-1)b_j})}, 
		$$  where $\ol \omega=\omega^{-1}$ and $g(\chi):=\sum_{x\in\F_q}\Psi(x)\chi(x)$ is the Gauss sum of $\chi$ with respect to a fixed choice of nontrivial additive character $\Psi$ of $\F_q$.

We say that the datum $HD = \{\alpha, \beta\}$ is defined over $\Q$ if both $\prod_{j=1}^n(X-e^{2\pi i a_j})$ and $\prod_{j=1}^n(X-e^{2\pi i b_j})$ are in $\Z[X]$. In such a special situation,  
one can rewrite $H_q(\alpha,\beta;t;\omega)$ by using the reflection and multiplication formulas of Gauss sums  \cite[Theorem 1.3]{BCM} and drop the condition $q\equiv 1\pmod M$.  Based on results of Katz and Beukers-Cohen-Mellit, there exist explicit algebraic varieties attached to such data and their corresponding Galois representations can be determined by $H_p(HD;t)$.  For simplicity, we state these results for $\beta=\{1,\ldots, 1\}$ below. 

\begin{theorem}[See \cite{Katz, Katz09, BCM, LLT2}] 
Let  $\ell$ be a prime. Given a datum   $HD:=\{\alpha=\{a_1,\cdots,a_n\},\,  \beta=\{1,\ldots, 1\}\}$ with $a_i\neq 1$, $M = \rm{lcd}(\alpha)$, for any  $\l \in \Z[\zeta_M,1/M] \backslash \{0\}$, there exists an $\ell$-adic Galois representation $$\rho_{HD,\ell}: G(M):=\mbox{Gal}(\ol \Q/\Q(\zeta_M))\rightarrow GL(W_{\l}),$$ unramified almost everywhere, such that at each prime ideal $\wp$ of  $ \Z[\zeta_M,1/(M\ell \l)]$ with norm $N(\wp)$,
\begin{equation*} 
\mbox{Tr} \rho_{HD,\ell}(\text{Frob}_\wp)= H_{N(\wp)}(\alpha,\beta; 1/\l;\omega_\wp),  
\end{equation*} 
where $\text{Frob}_\wp$ stands for the  geometric  Frobenius conjugacy class of $G(M)$ at $\wp$. Moreover,
\begin{enumerate}
\item When $\l\neq 1$, $dim_{\overline \Q_\ell}W_{\l} = n$ and all roots of the characteristic polynomial of $\rho_{HD,\ell}(\Frob_\wp)$  are algebraic with the same absolute value $N(\wp)^{(n-1)/2}$ under all Archimedean embeddings.
\item  When $\l=1$, $dim_{\overline \Q_\ell}W_{\l} = n-1$. 
\end{enumerate}
Furthermore, if $HD$ is defined over $\Q$ and $\l \in \Q$, then there exists an $\ell$-adic representation $\rho^{BCM}_{HD,\ell}$ of $G_\Q:=\rm{Gal}(\ol\Q/\Q)$ such that 
$$ \rho^{BCM}_{HD,\ell}\mid_{G(M)}\simeq \rho_{HD,\ell} $$ and for each prime $p\nmid M\ell$, $$ \mbox{Tr} \rho^{BCM}_{HD,\ell}(\text{Frob}_p)= H_{p}(\alpha,\beta; 1/\l).$$
\end{theorem}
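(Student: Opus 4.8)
The plan is to realize $\rho_{HD,\ell}$ as the monodromy representation attached to Katz's hypergeometric sheaf and to translate its properties into the language of the $H_q$-sums following Beukers--Cohen--Mellit, before descending from $G(M)$ to $G_\Q$. First I would recall from \cite{Katz, Katz09} the construction, via iterated middle convolution on $\mathbb{G}_m$, of the hypergeometric sheaf $\mathcal{H}=\mathcal{H}(\alpha,\beta)$ attached to the multisets $\alpha=\{a_1,\dots,a_n\}$ and $\beta=\{1,\dots,1\}$; because the two multisets are disjoint modulo $1$ (as $a_i\neq 1$), $\mathcal{H}$ is lisse of generic rank $n$ on $\mathbb{G}_m\setminus\{1\}$ and pure after the Tate twist fixing the BCM normalization. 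Restricting to the point $t=1/\l$ and letting the arithmetic fundamental group act on the stalk $W_\l$ produces a representation of $G(M)$ that is unramified at every prime $\wp\nmid M\ell\l$, which I take as $\rho_{HD,\ell}$. The identity $\mathrm{Tr}\,\rho_{HD,\ell}(\Frob_\wp)=H_{N(\wp)}(\alpha,\beta;1/\l;\omega_\wp)$ is then precisely the comparison between the trace function of $\mathcal{H}$ and the finite hypergeometric sum, which is the main output of \cite{BCM}; the only care needed is to match Katz's normalization of $\mathcal{H}$ with the explicit Gauss-sum expression defining $H_q$.

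For part (1), the generic rank $n$ and purity come for free from the previous step: for $\l\neq 1$ the stalk $W_\l$ has dimension equal to the generic rank $n$, and Deligne's purity theorem applied to the pure sheaf $\mathcal{H}$ forces every eigenvalue of the geometric Frobenius $\Frob_\wp$ to have absolute value $N(\wp)^{(n-1)/2}$ under all archimedean embeddings. Part (2) is the local statement at the finite singular point: at $t=1$ the local monodromy of a hypergeometric sheaf with $\beta=\{1,\dots,1\}$ is a single unipotent pseudoreflection, so the space of local invariants (equivalently, the stalk of the middle extension at $t=1$) has dimension exactly $n-1$. I would extract this drop directly from Katz's analysis of the local monodromy of hypergeometric sheaves at $1$ in \cite{Katz}, noting that it is also visible in the BCM tables of Hodge numbers.

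For the final assertion, I would use that ``defined over $\Q$'' means the multiset $\alpha$ is stable under the action $a\mapsto ua$ for every $u\in(\Z/M)^\times$ (and likewise $\beta$, which is automatic here). This Galois stability makes $\mathcal{H}$ equivariant for $\mathrm{Gal}(\Q(\zeta_M)/\Q)$, so by Katz's descent it comes from a sheaf over $\Q$; the associated $G_\Q$-representation $\rho^{BCM}_{HD,\ell}$ restricts to $\rho_{HD,\ell}$ on $G(M)$. The $\omega$-independent trace formula $\mathrm{Tr}\,\rho^{BCM}_{HD,\ell}(\Frob_p)=H_p(\alpha,\beta;1/\l)$ then follows from the reformulation of $H_q$ that removes the hypothesis $q\equiv 1\pmod M$, obtained by applying the reflection and multiplication formulas for Gauss sums as in \cite[Theorem 1.3]{BCM}; this is exactly the point where the dependence on the chosen generator $\omega$ of $\widehat{\F_q^\times}$ cancels. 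The packaging of these statements into the form stated is carried out in \cite{LLT2}.

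The hard part will be the bookkeeping of normalizations rather than any single deep input: one must reconcile Katz's weight-$(2n-1)$ convolution sheaf, the BCM weight-$(n-1)$ hypergeometric motive, and the sign and twist conventions hidden in $H_q$, all while tracking the character $\omega$ so that the descended trace is genuinely $\omega$-independent. The second most delicate point is the rank drop at $\l=1$, since it requires the precise local monodromy at the finite singularity and not merely the generic rank; everything else is a direct citation of \cite{Katz, Katz09, BCM, LLT2}.
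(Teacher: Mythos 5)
The paper does not prove this statement at all: it is quoted verbatim as a survey of known results and attributed to Katz, Beukers--Cohen--Mellit, and Li--Long--Tu, so there is no proof in the paper to compare yours against. Your outline is, however, a faithful reconstruction of how those references establish the result: realize the representation on the stalk at $t=1/\l$ of Katz's hypergeometric sheaf built by iterated middle convolution, get purity and rank $n$ from Deligne, get the rank drop at $\l=1$ from the local monodromy at the finite singularity, match trace functions with the Gauss-sum expression via \cite{BCM}, and descend to $G_\Q$ when the datum is Galois-stable, using \cite[Theorem 1.3]{BCM} to remove the dependence on $\omega$ and the congruence condition on $q$. Two small caveats. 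First, your claim that the local monodromy at $t=1$ is a \emph{unipotent} pseudoreflection is stronger than needed and not always true: for a type $(n,n)$ hypergeometric sheaf Katz shows local monodromy at $1$ is a pseudoreflection whose determinant is governed by $\prod_j \chi_{a_j}\overline{\chi}_{b_j}$, and it is a transvection only when that product is trivial; but the invariants of \emph{any} pseudoreflection have dimension $n-1$, which is all that part (2) requires. Second, your construction as stated gives a representation unramified away from $M\ell\l$, whereas the trace identity is asserted at primes of $\Z[\zeta_M,1/(M\ell\l)]$; one should also note that $1-\l$ (equivalently the specialization point hitting the singular fibre $t=1$) can contribute bad primes, which is why the theorem only claims "unramified almost everywhere." You are right that the genuine labor is the bookkeeping of normalizations (Katz's convolution weight versus the BCM weight $n-1$, the signs $(-1)^n$ and twists hidden in $H_q$), and that everything else is citation; since the paper itself treats the whole theorem as a citation, your sketch is an acceptable expansion of it rather than a divergence from it.
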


We now summarize the relations among the hypergeometric functions in different settings based on works including \cite{AOP, BCM, WIN2, HLLT, Mortenson} and Proposition~\ref{sec: WIN2proof} for the specific data in this paper.  Letting $\phi_q$ be the quadratic character on $\F_q$, we rewrite the relevant $H_q$ as
\begin{align*}
 H_q&\left( \left\{\frac 12, \frac 12 \right\},\{1,1\};t;\omega\right):=\frac {\phi_q(-1) }{q(1-q)} \sum_{k=0}^{q-2} 
  g(\phi_q \omega^{k})^2g(\ol\omega^k)^2\omega^k(t), \\
   H_q&\left( \left\{\frac 12, \frac 12, \frac12 \right\},\{1,1,1\};t;\omega\right):=\frac {g(\phi_q) }{q^2(q-1)} \sum_{k=0}^{q-2} 
  g(\phi_q \omega^{k})^3g(\ol\omega^k)^3\omega^k(-t),
\end{align*}
and for $d \in \{ 3, 4, 6\}$, 
\begin{align*}
   H_q\left( \left\{\frac 1d, \frac {d-1}d \right\},\{1,1\};t;\omega\right)&:=\frac {1 }{1-q} \sum_{k=0}^{q-2} g(\ol\omega^k)^2S_d(\omega^k)\omega^k(t), \\
     H_q\left( \left\{\frac 12, \frac 1d, \frac {d-1}d \right\},\{1, 1,1\};t;\omega\right)&:=\frac {\phi_q(-1)g(\phi_q) }{q(q-1)}  \sum_{k=0}^{q-2}  g(\phi \omega^{k})g(\ol\omega^k)^3S_d(\omega^k)\omega^k(-t),
\end{align*}
where $$S_d(\chi):=\displaystyle \prod_{n\mid d}\left(\frac{g(\chi^n)}{g(\chi)}\,\chi(n^{-n})\right)^{\mu(d/n)}$$ and $\mu(\cdot)$ is the M\"obius $\mu$-function.  Precisely, we obtain the values in Table \ref{tab:S_d}.

\begingroup
\begin{table}[h!] \label{tab:S_d}
\renewcommand{\arraystretch}{1.9}
\begin{tabular}{|c||c|c|c|}\hline
$d$ & $3$ & $4$ & $6$ \\ \hline
$S_d(\chi)$ & $\frac{g(\chi^3)}{g(\chi)}\chi(3^{-3})$ &  $\frac{g(\chi^4)}{g(\chi^2)}\chi(2^{-6})$ & $\frac{g(\chi)g(\chi^6)}{g(\chi^2)g(\chi^3)}\chi(2^{-4}3^{-3})$ \\ \hline
\end{tabular}
\medskip
\caption{$S_d(\chi)$ for $d\in \{3,4,6\}$}
\end{table}
\endgroup

\begin{theorem}(\cite{BCM, HLLT, Zudilin-Modularity} and Proposition \ref{prop: traces}) \label{thm: Hq-traces}
For $q=p^k$ with $p\geq 5$ prime, let $t\in \F_q$ and $t\neq 0, 1$ such that $ \widetilde{E}_{d}(t)$ in Table \ref{fams} is an elliptic curve defined over $\F_q$. Then 
$$
    \# \widetilde{E}_{d}(t)(\F_q)=q+1-H_q\left( \left\{d, \frac {d-1}d \right\},\{1,1\}; t\right).
$$
Furthermore, 
\begin{align*}
     H_q\left( \left\{d,  \frac {d-1}d\right\},\{1,1\}; t\right) & = \phi_q(k_d)H_q\left( \left\{d, \frac {d-1}d \right\},\{1,1\}; 1-t\right), \\
     H_p\left(\left\{d,  \frac {d-1}d \right\},\{1,1\}; 1\right) & =  \left(\frac {k_d}p\right),
\end{align*}
    where $\phi_q$ is the quadratic character of $\F_q$, and $k_2=k_6=-1$, $k_3=-3$, $k_4=-2$. 
\end{theorem}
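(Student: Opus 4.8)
The plan is to establish the three assertions in order: first the point-count formula, from which the reflection identity follows immediately via Proposition~\ref{prop: traces}, and finally the evaluation at $t=1$ by a degeneration argument supplemented by Lemma~\ref{lemma: p-Clausen}.

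For the point-count formula I would use that each family $\widetilde{E}_d(t)$ of Table~\ref{fams} realizes the hypergeometric datum $\left\{\tfrac1d,\tfrac{d-1}d\right\},\{1,1\}$ as a rank-$2$ motive, so that its trace of Frobenius is computed by the corresponding $H_q$-sum. When $d=2$ this is the classical Jacobi-sum computation for the Legendre curve: expanding $\#\widetilde{E}_2(t)(\F_q)=q+1+\sum_{x\in\F_q}\phi_q\bigl(x(1-x)(x-t)\bigr)$ in Gauss sums reproduces the displayed definition of $H_q\bigl(\{\tfrac12,\tfrac12\},\{1,1\};t;\omega\bigr)$. For $d\in\{3,4,6\}$ the Weierstrass models are not diagonal, and here the factor $S_d(\chi)$ recorded in Table~\ref{tab:S_d} is precisely the Hasse--Davenport multiplication-formula rewriting of the Gauss-sum product $\prod_j g(\omega^{(q-1)a_j+k})/g(\omega^{(q-1)a_j})$; this rewriting is what renders the datum defined over $\Q$ and removes the congruence condition $q\equiv1\pmod M$. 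I would then invoke the trace formula of Beukers--Cohen--Mellit (the results cited as \cite{BCM, HLLT, Zudilin-Modularity}) specialized to $n=2$, where the attached Galois representation is the one of $\widetilde{E}_d(t)$, to obtain $\#\widetilde{E}_d(t)(\F_q)=q+1-H_q(\cdots;t)$, equivalently $a_q(\widetilde{E}_d(t))=H_q(\cdots;t)$.

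Granting this, the reflection identity is formal. Since $a_q(\widetilde{E}_d(t))=H_q\bigl(\{\tfrac1d,\tfrac{d-1}d\},\{1,1\};t\bigr)$ for every admissible $t$, Proposition~\ref{prop: traces} gives
\[
H_q\!\left(\left\{\tfrac1d,\tfrac{d-1}d\right\},\{1,1\};t\right)=a_q(\widetilde{E}_d(t))=\phi_q(k_d)\,a_q(\widetilde{E}_d(1-t))=\phi_q(k_d)\,H_q\!\left(\left\{\tfrac1d,\tfrac{d-1}d\right\},\{1,1\};1-t\right),
\]
which is the first claim of the ``furthermore.'' For the value at $t=1$, note that $\lambda=1/t=1$ is the degenerate parameter in the Beukers--Cohen--Mellit theorem, where the hypergeometric local system drops from rank $n=2$ to rank $1$; equivalently $c-a-b=1-\tfrac1d-\tfrac{d-1}d=0$ places us at the boundary of the finite-field Gauss summation theorem. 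Evaluating the defining Gauss-sum expression at $t=1$ by the reflection and multiplication formulas of \cite[Thm.~1.3]{BCM} collapses it to a single quadratic character value, in particular an element of $\{\pm1\}$. To identify the sign I would invoke Lemma~\ref{lemma: p-Clausen}, which gives $[F_{0,(\frac1d,\frac{d-1}d)}]_{p-1}(1)\equiv\left(\frac{k_d}p\right)\pmod p$, together with the standard congruence $H_p\bigl(\{\tfrac1d,\tfrac{d-1}d\},\{1,1\};1\bigr)\equiv[F_{0,(\frac1d,\frac{d-1}d)}]_{p-1}(1)\pmod p$; since both $H_p(\cdots;1)$ and $\left(\frac{k_d}p\right)$ lie in $\{\pm1\}$ and agree modulo $p\geq5>2$, they coincide.

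The step I expect to be the main obstacle is the point-count formula for $d\in\{3,4,6\}$: verifying that the Hasse--Davenport rewriting packaged into $S_d(\chi)$ genuinely produces the trace of Frobenius on $\widetilde{E}_d(t)$ demands careful Gauss-sum bookkeeping together with the explicit isogeny-and-twist descriptions already used in the proof of Proposition~\ref{prop: traces}. Once that correspondence is secured, the reflection identity and the value at $t=1$ are both short.
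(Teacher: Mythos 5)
Your proposal matches the paper's intended derivation: the paper states Theorem \ref{thm: Hq-traces} with no written proof, attributing the point-count formula to \cite{BCM, HLLT, Zudilin-Modularity} and the reflection identity to Proposition \ref{prop: traces}, which is exactly how you organize the argument --- the point count is deferred to the same references, and the reflection identity is the one-line consequence $H_q(\,\cdot\,;t)=a_q(\widetilde E_d(t))=\phi_q(k_d)a_q(\widetilde E_d(1-t))=\phi_q(k_d)H_q(\,\cdot\,;1-t)$. The one place you go beyond the paper's citation is the evaluation at $t=1$, where you supply your own argument: degeneration of the rank-$2$ local system to rank $1$ to see that $H_p\left(\left\{\frac1d,\frac{d-1}d\right\},\{1,1\};1\right)\in\{\pm1\}$, then the Dwork-type congruence $H_p(\,\cdot\,;1)\equiv[F_{0,(\frac1d,\frac{d-1}d)}]_{p-1}(1)\pmod p$ (Remark \ref{rmk: Hp and F}) together with Lemma \ref{lemma: p-Clausen} at $\l=1$ to identify the sign, using $p\geq5$ to separate $1$ from $-1$. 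That step is valid and non-circular, since Lemma \ref{lemma: p-Clausen} obtains $[F_{0,(\frac1d,\frac{d-1}d)}]_{p-1}(1)\equiv\left(\frac{k_d}p\right)\pmod p$ via Chu--Vandermonde and $p$-adic Gamma functions rather than via $H_p$; the only soft spot is that the membership in $\{\pm1\}$ is asserted from \cite[Thm.~1.3]{BCM} rather than computed (for $d=2$ one can check it directly on the nodal fiber). As you anticipate, the substantive deferred work in both your writeup and the paper is the identification $a_q(\widetilde E_d(t))=H_q\left(\left\{\frac1d,\frac{d-1}d\right\},\{1,1\};t\right)$ for $d\in\{3,4,6\}$ via the Hasse--Davenport packaging of $S_d(\chi)$ in Table \ref{tab:S_d}, so your proof is on equal footing with the paper there.
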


Building on the work of Evans-Greene \cite{Evans-Greene} and Theorem \ref{thm: Hq-traces}, we have the following $H_q$-version of Clausen's formula. The Galois interpretation can be found in \cite{LLT2}. 

\begin{theorem}[$H_p$-Clausen formula \cite{Evans-Greene, HLLT}]
Let $p>3$ be a prime and $t\in \F_p\setminus\{0,1\}$. Let $\sqrt {1-t}$ be a fixed element of $\F_{p^2}$ such that $\sqrt {1-t}^2 =1-t$. Then 
$$
  \left(\frac{1-t}p\right) H_p  \left( \left\{  \frac 12,\frac12, \frac 12 \right\},\{1,1,1\};t\right) =  H_p \left( \left\{ \frac14, \frac 34 \right\},\{1,1\};\frac t{t-1}\right)^2-p.  
$$
For $d=2$, $3$, $4$, $6$, we have 
    \begin{align*}
         H_p & \left( \left \{  \frac 12,\frac1d, 1-\frac 1d \right \},\{1,1,1\};t\right) \\
         &= \begin{cases}
              H_p\left ( \left\{\frac1d, 1-\frac 1d \right \},\{1,1\};\frac{1-\sqrt{1-t}}2\right)^2-p, & \quad \mbox{ if $1-t$ is a square in  }  \F_p,\\
            \left(\frac{\kappa_d}p\right) H_{p^2}\left (\{\frac1d, 1-\frac 1d\},\{1,1\};\frac{1-\sqrt{1-t}}2\right)-p,& \quad \mbox{ if $1-t$ is not  a square in  }  \F_p, 
          \end{cases}  \\ 
          H_p & \left(  \left\{  \frac 12,\frac1d, 1-\frac 1d \right\},\{1,1,1\};1\right) =  H_p\left( \left\{\frac1d, 1-\frac 1d \right\},\{1,1\};\frac12\right)^2-\left(1+\left(\frac{k_d}p\right)\right)p,
    \end{align*}

 where  $k_2=k_6=-1$, $k_3=-3$, $k_4=-2$. 
\end{theorem}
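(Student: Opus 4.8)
The plan is to prove these finite-field Clausen identities by transporting the classical period relation --- that the symmetric square of the motive of an elliptic curve has Frobenius trace equal to $(\text{trace})^2 - q$ --- into the language of the $H_q$-sums, using the point-count interpretation of Theorem \ref{thm: Hq-traces}. First I would record that, by Theorem \ref{thm: Hq-traces}, for any admissible finite field $\F_q$ and any admissible $\mu$, the rank-two sum $H_q(\{\frac1d,\frac{d-1}d\},\{1,1\};\mu)$ equals $a_q(\widetilde{E}_d(\mu))$, the trace of the $q$-power geometric Frobenius on $H^1$ of $\widetilde{E}_d(\mu)$. The second ingredient is the identification of the rank-three sum $H_p(\{\frac12,\frac1d,\frac{d-1}d\},\{1,1,1\};t)$ with (a quadratic twist of) the symmetric square of that rank-two datum under the Clausen substitution $t=4\mu(1-\mu)$, i.e.\ $\mu=\frac{1\pm\sqrt{1-t}}2$; this is precisely the datum entering the Galois-representation theorem above, and $4\mu(1-\mu)$ is the same change of variable used in \eqref{eq:WIN2Lem18}. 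Rigorously these two identifications are the Gauss-sum content of Evans--Greene and \cite{HLLT}, obtained via the reflection, multiplication, and Hasse--Davenport relations; the motivic picture serves as the organizing principle that makes each term transparent.

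With these identifications in hand, the core computation is purely linear-algebraic: if $\Phi$ is geometric Frobenius on $H^1(\widetilde{E}_d(\mu))$ with eigenvalues $\alpha,\beta$ satisfying $\alpha\beta=q$ and $\alpha+\beta=a_q$, then $\operatorname{Tr}(\Sym^2\Phi)=\alpha^2+\alpha\beta+\beta^2=a_q^2-q$. I would then split into the two cases recorded in the statement. When $1-t$ is a square in $\F_p$, the point $\mu=\frac{1-\sqrt{1-t}}2$ lies in $\F_p$, the curve $\widetilde{E}_d(\mu)$ is defined over $\F_p$, and the symmetric-square relation gives directly $H_p(\{\frac12,\frac1d,\frac{d-1}d\};t)=H_p(\{\frac1d,\frac{d-1}d\};\mu)^2-p$. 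When $1-t$ is not a square, $\mu\in\F_{p^2}\setminus\F_p$ and $\widetilde{E}_d(\mu)$ is only defined over $\F_{p^2}$; here the $\F_p$-Frobenius interchanges the two Galois-conjugate fibers $\widetilde{E}_d(\mu)$ and $\widetilde{E}_d(1-\mu)$, so the relevant trace is governed by $a_{p^2}$ rather than $a_p$. The quadratic-twist relation $a_q(\widetilde{E}_d(\mu))=\phi_q(k_d)\,a_q(\widetilde{E}_d(1-\mu))$ of Proposition \ref{prop: traces} is exactly what produces the character factor $\left(\frac{\kappa_d}p\right)$, yielding $H_p(\{\frac12,\frac1d,\frac{d-1}d\};t)=\left(\frac{\kappa_d}p\right)H_{p^2}(\{\frac1d,\frac{d-1}d\};\mu)-p$.

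For the degenerate value at $t=1$ I would invoke case (2) of the Galois-representation theorem, the case $\lambda=1$, where the rank drops from three to two: here $\mu=\frac12$ is the fixed point of $\mu\mapsto 1-\mu$, the two conjugate contributions coincide, and one eigenvalue degenerates, producing the correction term $-\bigl(1+\left(\frac{k_d}p\right)\bigr)p$ in place of $-p$. The normalization is pinned down by the evaluation $H_p(\{\frac1d,\frac{d-1}d\};1)=\left(\frac{k_d}p\right)$ also supplied by Theorem \ref{thm: Hq-traces}. Finally, the first displayed identity is the $d=2$ instance, with $\{\frac12,\frac12,\frac12\}$ on the left, rewritten on the right through the $\{\frac14,\frac34\}$ datum evaluated at $\frac{t}{t-1}$: I would deduce it from Clausen's formula for the datum $\{\frac14,\frac14\}$ (whose square is $\{\frac12,\frac12,\frac12\}$) followed by the finite-field Euler--Pfaff transformation $z\mapsto \frac{z}{z-1}$ carrying $\{\frac14,\frac14\}$ to $\{\frac14,\frac34\}$, in which the classical prefactor $(1-t)^{1/2}$ becomes the quadratic character $\left(\frac{1-t}p\right)$.

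The step I expect to be the main obstacle is the precise bookkeeping of characters and signs in the inert case and at $t=1$. Identifying the rank-three sum with the symmetric square already requires the full slate of Gauss-sum relations, and then tracking how the $\F_p$-Frobenius acts across the unramified quadratic extension --- together with the twist character of Proposition \ref{prop: traces} --- in order to land on exactly $\left(\frac{\kappa_d}p\right)$, rather than its negative or a spurious power of $p$, is where the delicacy lies. An alternative and perhaps cleaner route for the character sums themselves is to bypass the motivic language entirely and derive the identities directly from the Gauss-sum definitions of the $H_q$ via the Evans--Greene finite-field analogue of Clausen's formula, in which case the eigenvalue computation above functions only as a guide to the correct shape of each term.
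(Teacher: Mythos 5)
You should first be aware that the paper does not prove this theorem at all: it appears in the survey portion of the appendix, attributed to Evans--Greene and \cite{HLLT}, with no argument supplied, so there is no internal proof to compare against. Your outline is a faithful reconstruction of how the result is established in those references and of the apparatus the paper assembles around it: the point-count identity of Theorem \ref{thm: Hq-traces}, the eigenvalue computation $\alpha^2+\alpha\beta+\beta^2=a_q^2-q$ for the symmetric square, the split/inert dichotomy according to whether $\mu=\frac{1-\sqrt{1-t}}{2}$ lies in $\F_p$ or only in $\F_{p^2}$, the rank drop at $t=1$, and Clausen for the datum $\{\frac14,\frac14\}$ followed by the Pfaff transformation $z\mapsto \frac{z}{z-1}$ (with $(1-t)^{1/2}$ becoming $\left(\frac{1-t}{p}\right)$) for the first display. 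All of this is the correct organizing picture.

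Two caveats. First, the central identification --- that $H_p(\{\frac12,\frac1d,\frac{d-1}{d}\},\{1,1,1\};t)$ is the trace of the symmetric square (respectively of the tensor-induced representation in the inert case) with exactly the lower-order corrections $-p$ and $-\bigl(1+\left(\frac{k_d}{p}\right)\bigr)p$ --- is precisely the content of the finite-field Clausen formula, and you defer it to the references; the motivic heuristic predicts the weight-two main term but cannot by itself certify the degenerate terms, which come out of the Gauss-sum computation (Hasse--Davenport relations, the reflection formula, and the boundary terms of the defining sum). Since the paper delegates the whole theorem to the same sources, your proposal is no less complete than the paper's treatment, but it is an outline rather than a proof. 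Second, your account of where $\left(\frac{k_d}{p}\right)$ comes from in the inert case is not quite right: Proposition \ref{prop: traces} applied over $\F_{p^2}$ gives $\phi_{p^2}(k_d)=1$, since every element of $\F_p^\times$ is a square in $\F_{p^2}^\times$, so the quadratic-twist relation over the quadratic extension cannot by itself produce a Legendre symbol over $\F_p$. That factor arises from the finer structure of the degree-$p$ isogeny interchanging the two conjugate fibers (the analogue of the constants $\Delta_\pm$ with $\Delta_-\Delta_+\equiv 1$ in Case 2 of the paper's proof of Theorem \ref{thm:WIN2b}), and pinning down its sign rather than its negative is exactly the delicacy you anticipated.
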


Combining the previous information gives the following. 

\begin{theorem}\label{thm: Hp and F super}
Assume the notation and assumptions in Theorem \ref{thm:WIN2a} and that $\l \in \Q$ is a $CM$-value.  Further let $K_{CM}$ be the discriminant of the corresponding CM field. Then for primes $p\geq 7$ such that $\l\in \Z_p^\times$, 
\begin{align*}
   H_p\left( \left\{\frac 12,\frac1d, 1-\frac 1d \right\},\{1,1,1\};\l\right)&- \left(\frac{K_{CM}}p\right) \left(\frac{1-\l}p\right)p  \\
   &=\begin{cases}
0 & \text{if }  \widetilde{E}_d(u) \text{ is supersingular at } p\\
u_p^2 + p^2/u_p^2 & \text{if }  \widetilde{E}_d(u) \text{ is ordinary at } p \text{ and }\left(\frac{1-\l}p \right)=1, \\ 
\left( \frac{k_d}{p} \right)\left(u_p^2 + p^2/u_p^2\right)  & \text{if }\widetilde{E}_d(u) \text{ is ordinary at } p \text{ and }\left(\frac{1-\l}p \right)=-1, 
\end{cases}
\end{align*} 
and 
$$
     [F_{0,(\frac12, \frac{1}{d}, \frac{d-1}{d})}]_{p-1} (\l)\equiv   H_p\left( \left\{\frac 12,\frac1d, 1-\frac 1d \right\},\{1,1,1\};\l\right)- \left(\frac{K_{CM}}p\right) \left(\frac{1-\l}p\right)p  \pmod{p^2},
$$
where 
 $u=\frac{1-\sqrt{1-\l}}2$, $u_p$ is a fixed unit root of the geometric Frobenius at $p$ acting on the first cohomology of the elliptic curve $\widetilde{E}_d(u)$, and $k_2=k_6=-1$, $k_3=-3$, $k_4=-2$.  
\end{theorem}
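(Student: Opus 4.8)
The plan is to obtain the exact identity first and then deduce the congruence modulo $p^2$ as a formal consequence. Throughout set $u=v_-=\frac{1-\sqrt{1-\l}}{2}$ and $v_+=1-u=\frac{1+\sqrt{1-\l}}{2}$, the two roots of $z^2-z+\l/4$. The engine is the $H_p$-Clausen formula of the preceding theorem, which expresses $H_p(\{\tfrac12,\tfrac1d,1-\tfrac1d\},\{1,1,1\};\l)$ in terms of the rank-$2$ datum at $u$, combined with Theorem \ref{thm: Hq-traces}, which identifies $H_{p^r}(\{\tfrac1d,\tfrac{d-1}d\},\{1,1\};u)$ with the Frobenius trace $a_{p^r}(\widetilde E_d(u))$. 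I would first split according to whether $1-\l$ is a square modulo $p$.

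When $\left(\frac{1-\l}{p}\right)=1$ the curve $\widetilde E_d(u)$ is defined over $\F_p$ and the $H_p$-Clausen formula gives
\[
H_p\!\left(\{\tfrac12,\tfrac1d,1-\tfrac1d\},\{1,1,1\};\l\right)=a_p(\widetilde E_d(u))^2-p,
\]
while for $\left(\frac{1-\l}{p}\right)=-1$ the curve is defined over $\F_{p^2}$ and the value is $\left(\frac{k_d}{p}\right)a_{p^2}(\widetilde E_d(u))-p$. In the ordinary case I would substitute $a_p=u_p+p/u_p$ (respectively $a_{p^2}=u_p^2+p^2/u_p^2$, as in Case~2 of the proof of Theorem \ref{thm:WIN2b}), expand, and note that the leftover additive error is exactly $\pm p$. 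The correction term $\left(\frac{K_{CM}}{p}\right)\!\left(\frac{1-\l}{p}\right)p$ is built to cancel it: by Deuring's criterion (\cite[Ch. 13 Thm. 12]{Lang_EF}) ordinary reduction is equivalent to $p$ splitting in the CM field, i.e. to $\left(\frac{K_{CM}}{p}\right)=1$, and a short sign check in each of the two ordinary subcases leaves $u_p^2+p^2/u_p^2$ and $\left(\frac{k_d}{p}\right)(u_p^2+p^2/u_p^2)$ exactly as claimed.

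For supersingular reduction Deuring's criterion gives $\left(\frac{K_{CM}}{p}\right)=-1$. If $\left(\frac{1-\l}{p}\right)=1$ then $a_p=0$, so $H_p=-p$ and the correction term is $-p$, yielding $0$. The delicate subcase is supersingular with $\left(\frac{1-\l}{p}\right)=-1$, where I must establish $a_{p^2}(\widetilde E_d(u))=\left(\frac{k_d}{p}\right)\cdot 2p$. Since $p$ is inert in the CM field, the $p^2$-Frobenius on the supersingular reduction commutes with the reduced (quaternionic) endomorphism algebra and is therefore a rational scalar $\pm p$, so $a_{p^2}=\pm 2p$; pinning the sign to $\left(\frac{k_d}{p}\right)$ is where I would invoke the explicit isogenies and quadratic twist by $k_d$ recorded in the proof of Proposition \ref{prop: traces}, together with the identification $\widetilde E_d(u)^{(p)}=\widetilde E_d(v_+)$ arising from $u^p=v_+$. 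This sign determination is the main obstacle of the argument, since every other case reduces to routine expansion.

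Finally the congruence is immediate. Reducing the exact identity modulo $p^2$ annihilates each occurrence of $p^2/u_p^2$, so
\[
H_p\!\left(\{\tfrac12,\tfrac1d,1-\tfrac1d\},\{1,1,1\};\l\right)-\left(\frac{K_{CM}}{p}\right)\!\left(\frac{1-\l}{p}\right)p
\]
is congruent modulo $p^2$ to $0$, $u_p^2$, or $\left(\frac{k_d}{p}\right)u_p^2$ according to the three cases. These are precisely the values assigned to $[F_{0,(\frac12,\frac1d,\frac{d-1}d)}]_{p-1}(\l)$ by Theorem \ref{thm:WIN2b} (with Lemma \ref{lemma: p-Clausen} handling the passage between $\l$ and $u,v_+$), so comparing the two gives the asserted congruence modulo $p^2$.
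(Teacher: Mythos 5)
Your proposal assembles exactly the ingredients the paper intends for this statement (the paper gives no written proof beyond ``combining the previous information''): the $H_p$-Clausen formula to pass from the rank-$3$ datum at $\l$ to the rank-$2$ datum at $u$, Theorem \ref{thm: Hq-traces} to identify those values with $a_p(\widetilde E_d(u))$ or $a_{p^2}(\widetilde E_d(u))$, the factorization of the trace through the unit root $u_p$, Deuring's criterion to match the correction term $\left(\frac{K_{CM}}{p}\right)\left(\frac{1-\l}{p}\right)p$, and finally Theorem \ref{thm:WIN2b} for the mod $p^2$ comparison with the truncated series; all of your case-by-case sign checks are consistent with the stated formula. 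The one step you flag but do not complete --- that in the supersingular case with $\left(\frac{1-\l}{p}\right)=-1$ one has $a_{p^2}(\widetilde E_d(u))=\left(\frac{k_d}{p}\right)2p$ rather than merely $\pm 2p$ --- is genuinely the only nontrivial point, and your proposed route (the explicit $W$-isogenies of Proposition \ref{prop: traces} together with $\widetilde E_d(u)^{(p)}=\widetilde E_d(v_+)$) is viable but would need to be carried out; note it can also be extracted from the CM/Gr\"ossencharacter description of the $L$-function used in Section \ref{sec: modular}, where the inert-prime coefficient of the associated weight-$3$ form vanishes. With that step supplied, the argument is complete and matches the paper's intended derivation.
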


\begin{remark}\label{rmk: Hp and F}
In general, one has
$$ [F_{0,(\frac{1}{d}, \frac{d-1}{d})}]_{p-1} (t)\equiv   H_p\left( \left\{\frac1d, 1-\frac 1d \right\},\{1,1\};t\right)\quad \pmod {p}, $$
$$ [F_{0,(\frac12, \frac{1}{d}, \frac{d-1}{d})}]_{p-1} (t)\equiv   H_p\left( \left\{\frac 12,\frac1d, 1-\frac 1d \right\},\{1,1,1\};t\right)\quad \pmod {p}, $$
for $t \in \Q^\times$ with $t\in \Z_p^\times$, see \cite{Long18, RRV} for example.  
\end{remark}

\begin{corollary}
\label{Cor: Hpweight3}
Assume the notation and assumptions as above. Then, for  primes $p\nmid 2d$ such that $\l\in \Z_p^\times$, there exists a weight-3 Hecke eigenform $f$ depending on $d$ and $\l$ such that 
\[
H_p\left( \left\{\frac 12,\frac1d, 1-\frac 1d \right\},\{1,1,1\};\l\right) - \left(\frac{K_{CM}}p\right) \left(\frac{1-\l}p\right)p  = a_p(f),
\] 
where $a_p(f)$ is the $p$-th Fourier coefficient of $f$.  
\end{corollary}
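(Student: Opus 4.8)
The plan is to read the identity directly off Theorem~\ref{thm: Hp and F super} and then recognize its right-hand side as the $p$-th Hecke eigenvalue of an explicit weight-$3$ CM form, reproducing the Galois-theoretic argument used for Corollary~\ref{cor: modularity}. Writing $u=\frac{1-\sqrt{1-\l}}2$ and $E_-=\widetilde E_d(u)$, Theorem~\ref{thm: Hp and F super} already gives the \emph{exact} value
\[
H_p\left( \left\{\frac 12,\frac1d, 1-\frac 1d \right\},\{1,1,1\};\l\right)- \left(\frac{K_{CM}}p\right) \left(\frac{1-\l}p\right)p
= \begin{cases} 0 & E_- \text{ supersingular}, \\ u_p^2 + p^2/u_p^2 & E_- \text{ ordinary},\ \left(\frac{1-\l}p\right)=1, \\ \left(\frac{k_d}p\right)\left(u_p^2 + p^2/u_p^2\right) & E_- \text{ ordinary},\ \left(\frac{1-\l}p\right)=-1. \end{cases}
\]
So the entire task is to produce a weight-$3$ Hecke eigenform $f$, depending only on $d$ and $\l$, whose eigenvalue $a_p(f)$ equals this expression on the nose. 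The essential point --- and the reason the present statement is an equality rather than the congruence modulo $p^2$ of Corollary~\ref{cor: modularity} --- is that $H_p$ records the \emph{full} Frobenius trace $u_p^2+p^2/u_p^2$, whereas the truncated hypergeometric sum $[F_{0,(\frac12,\frac1d,\frac{d-1}d)}]_{p-1}(\l)$ only sees its $p$-adic unit root $u_p^2$.

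First I would invoke the modularity of the CM curves $E_\pm=\widetilde E_d\!\left(\frac{1\pm\sqrt{1-\l}}2\right)$ over the real quadratic field $\Q(\sqrt{1-\l})$ (Freitas--Hung--Siksek \cite{FLS}) and attach to them the Grossencharacters $\psi_\pm$ with $L(\psi_\pm,s)=L(E_\pm/\Q(\sqrt{1-\l}),s)$, exactly as in the proof of Corollary~\ref{cor: modularity}. Following the discussion after that corollary, the tensor product $\rho_\ell=\rho_{E_-}\otimes\rho_{E_+}$ decomposes as $\sigma_\ell\oplus\epsilon_\ell\oplus\epsilon_\ell$, and by Weil's converse theorem \cite{Weil, Ogg-book} the two-dimensional summand $\sigma_\ell$ is attached to a weight-$3$ CM cusp form $g$ built from $\psi_-\psi_+$; the eigenform in the statement is then $f=\left(\frac{k_d}\cdot\right)\otimes g$. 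The crucial Frobenius computation is that at an ordinary prime $p$ the eigenvalues of $\rho_\ell(\Frob_p)$ are $u_p(E_-)^2,p,p,p^2/u_p(E_-)^2$, whence $\mathrm{Tr}\,\sigma_\ell(\Frob_p)=u_p^2+p^2/u_p^2=a_p(g)$ \emph{exactly}; here one uses that $E_-$ and $E_+$ share the same $p$-adic unit root $u_p$, established in Case~2 of the proof of Theorem~\ref{thm:WIN2b}. When $p$ is supersingular it is inert or ramified in the CM field, so $g$ --- and hence $f$ --- has vanishing $p$-th coefficient, matching the value $0$.

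It remains to show that a single eigenform $f$ covers both ordinary subcases. In the split subcase $\left(\frac{1-\l}p\right)=1$ the expression is $a_p(g)$, while in the inert subcase $\left(\frac{1-\l}p\right)=-1$ it is multiplied by $\left(\frac{k_d}p\right)$, and the passage from $g$ to $f=\left(\frac{k_d}\cdot\right)\otimes g$ is exactly what absorbs this sign, so that in both subcases the value is $a_p(f)$. I expect this character bookkeeping to be the main obstacle: one must check that the quadratic twist relating $g$ and $f$, the CM quadratic character $\left(\frac{K_{CM}}\cdot\right)$ that governs ordinarity, and the splitting character $\left(\frac{1-\l}\cdot\right)$ are mutually compatible, so that one genuine weight-$3$ newform $f$ produces the correct sign uniformly in $p$. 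This compatibility is the content already packaged in Theorem~\ref{thm:WIN2b}, and it is visible in the examples of Corollary~\ref{cor:ap-lmfdb-examples}: for $d=6$, $\l=\frac4{125}$ one has $\left(\frac{1-\l}\cdot\right)=\left(\frac5\cdot\right)$ and $f=\left(\frac5\cdot\right)\otimes\eta(2\tau)^3\eta(6\tau)^3=f_{300.3.g.b}$. Carrying out this matching uniformly over all $d\in\{2,3,4,6\}$ and all admissible CM-values $\l$, and confirming that the resulting $f$ is an honest weight-$3$ Hecke eigenform, is where the real work lies.
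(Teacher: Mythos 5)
Your proposal is correct and follows essentially the same route as the paper: the paper gives no separate written proof of this corollary beyond the remark that modularity comes from the CM hypergeometric elliptic curves, deferring to the exact evaluation in Theorem~\ref{thm: Hp and F super} together with the Grossencharacter/tensor-product construction ($\rho_\ell=\sigma_\ell\oplus\epsilon_\ell\oplus\epsilon_\ell$, $f=\left(\frac{k_d}{\cdot}\right)\otimes g$) already laid out in and after Corollary~\ref{cor: modularity}, which is precisely the argument you reconstruct. Your additional observation that the character-sum version yields an exact equality (rather than the mod $p^2$ congruence) because $H_p$ captures the full Frobenius trace $u_p^2+p^2/u_p^2$ is the correct reading of why the statement is an identity.
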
 
The modularity of the corresponding Galois representations in this corollary is essentially coming from the modularity of CM hypergeometric elliptic curves (or K3 surfaces \cite{HGK3}) in the background.  The modularity of the hypergeometric representations arising from datum defined over $\Q$ has been developed by numerous researchers  (see \cite{Ahlgren-Ono-CalabiYau, Barman-Saikia, Fuselier-McCarthy, LLT2, LTYZ, McCarthy-Papanikolas,  Tripathi-Meher,   Mortenson-padic,  RRV,  Salerno}).  In recent work, Allen, Grove, Long, and the fifth author \cite{HMM1} establish the modularity of a certain type of representations, those that can be lifted to $\mbox{Gal}(\ol \Q/\Q)$, from the commutative formal group law and an explicit construction of the corresponding cusp forms via the modular forms on arithmetic triangle groups as discussed in this paper and for example in \cite{Beukers-supercongruence, WIN5, HLLT}.

\end{document}